 \numberwithin{equation}{section}
\newtheorem{theorem}{Theorem}[section]
\newtheorem{proposition}[theorem]{Proposition}
\newtheorem{lemma}[theorem]{Lemma}
\newtheorem{corollary}[theorem]{Corollary}
\newtheorem{conjecture}[theorem]{Conjecture}
\theoremstyle{definition}
\newtheorem{example}[theorem]{Example}
\newtheorem{examples}[theorem]{Examples}
\theoremstyle{remark}
\newtheorem{remark}[theorem]{Remark}
\newcommand{\kk}{\ensuremath{\Bbbk}} 
\newcommand{\NN}{\ensuremath{\mathbb{N}}} 
\newcommand{\QQ}{\ensuremath{\mathbb{Q}}} 
\newcommand{\ZZ}{\ensuremath{\mathbb{Z}}} 
\newcommand{\one}{\ensuremath{(\mathrm{i})}}
\newcommand{\two}{\ensuremath{(\mathrm{ii})}}
\newcommand{\three}{\ensuremath{(\mathrm{iii})}}
\newcommand{\coh}{\operatorname{coh}}
\newcommand{\head}{\operatorname{h}}
\newcommand{\pos}{\operatorname{pos}}
\newcommand{\rank}{\operatorname{rank}}
\newcommand{\rep}{\operatorname{\mathfrak{M}_\vartheta}}   
\newcommand{\tail}{\operatorname{t}}
\newcommand{\Coker}{\operatorname{Coker}}
\newcommand{\Cox}{\operatorname{Cox}}  
\newcommand{\End}{\operatorname{End}}
\newcommand{\Ext}{\operatorname{Ext}} 
\newcommand{\Fl}{\operatorname{Fl}} 
\newcommand{\GL}{\operatorname{GL}} 
\newcommand{\Gr}{\operatorname{Gr}}  
\newcommand{\Hom}{\operatorname{Hom}} 
\newcommand{\Image}{\operatorname{Image}} 
\newcommand{\Ker}{\operatorname{Ker}} 
\newcommand{\Mat}{\operatorname{Mat}} 
\newcommand{\Pic}{\operatorname{Pic}}
\newcommand{\Rep}{\operatorname{Rep}} 
\newcommand{\Spec}{\operatorname{Spec}} 
\newcommand{\Sym}{\operatorname{Sym}} 
\newcommand{\Young}{\operatorname{Young}}
 \DeclareMathOperator{\Rderived}{\mathbf{R}\!}
 \newcommand{\modA}{\operatorname{mod}(\ensuremath{A})}
\newcommand{\git}{\ensuremath{/\!\!/\!}}
\title{Quiver Flag Varieties and Multigraded Linear Series}
\thanks{MSC 2000: Primary 14D22, 16G20, 18E30; Secondary 14M15, 14M25}
 \author{Alastair Craw} 
 \address{Department of Mathematics\\ University of Glasgow\\
  Glasgow\\ G12 8QW\\ United Kingdom}
 \email{anc@maths.gla.ac.uk}
\begin{document}
\bibliographystyle{plain}

 \begin{abstract}
 This paper introduces a class of smooth projective varieties that generalise and share many properties with partial flag varieties of type $A$.  The quiver flag variety $\mathcal{M}_\vartheta(Q,\underline{r})$ of a finite acyclic quiver $Q$ (with a unique source) and a dimension vector $\underline{r}$ is a  fine moduli space of stable representations of $Q$. Quiver flag varieties are Mori Dream Spaces, they are obtained via a tower of Grassmann bundles, and their bounded derived category of coherent sheaves is generated by a tilting bundle. We define the multigraded linear series of a weakly exceptional sequence of locally free sheaves $\underline{\mathscr{E}} = (\mathscr{O}_X,\mathscr{E}_1,\dots, \mathscr{E}_\rho)$ on a projective scheme $X$ to be the quiver flag variety $\vert \underline{\mathscr{E}}\vert:=\mathcal{M}_\vartheta(Q,\underline{r})$ of a pair $(Q, \underline{r})$ encoded by $\underline{\mathscr{E}}$. When each $\mathscr{E}_i$ is globally generated, we obtain a morphism $\varphi_{\vert \underline{\mathscr{E}}\vert}\colon X\to \vert \underline{\mathscr{E}}\vert$ realising each $\mathscr{E}_i$ as the pullback of a tautological bundle. As an application we introduce the multigraded Pl\"{u}cker embedding of a quiver flag variety
  \end{abstract}

 \maketitle

 \section{Introduction}
 Flag varieties of type $A$ are smooth projective varieties that come armed with a collection of tautological vector bundles which determine many algebraic invariants and geometric properties of the variety itself. For example, the Grassmannian $\Gr(\kk^n,r)$ carries one such bundle, namely the tautological quotient bundle $\mathscr{E}$ of rank $r$, whose determinant line bundle $\det(\mathscr{E})$ generates the Picard group and induces the Pl\"{u}cker embedding; on a deeper level, the celebrated results of Beilinson~\cite{Beilinson} and Kapranov~\cite{Kapranov1} establish that  the direct sum of certain Schur powers of $\mathscr{E}$ provides a tilting generator for the bounded derived category of coherent sheaves on $\Gr(\kk^n,r)$. These results were extended to partial flag varieties of type $A$ by Kapranov~\cite{Kapranov2}, where the description of the flag variety as a tower of Grassmann-bundles was exploited. 
 
 The primary goal of this paper is to extend these results to a much broader class of varieties. Let $Q$ be a finite acyclic quiver with a unique source, and let $\underline{r}$ be a dimension vector for which the component of $\underline{r}$ corresponding to the source vertex is 1. The quiver flag variety  $\mathcal{M}_\vartheta(Q,\underline{r})$ is defined to be the fine moduli space $\mathcal{M}_\vartheta(Q,\underline{r})$ of $\vartheta$-stable representations of $Q$ with dimension vector $\underline{r}$, where $\vartheta$ is a given stability condition. Familiar examples include partial flag varieties of type $A$,  products of Grassmannians,  and certain toric varieties studied by Hille~\cite{Hille}; the empty set can occur, but we provide a simple necessary and sufficient condition for nonempty moduli. Our guiding philosophy is that quiver flag varieties are similar in spirit to flag varieties, and we provide some evidence for this by establishing the following geometric result.
 
 \begin{theorem}
  \label{thm:towerintro}
 For any quiver $Q$ and vector $\underline{r}$ as above,   $\mathcal{M}_\vartheta(Q,\underline{r})$ is a smooth projective Mori Dream Space that gives rise to a tower of Grassmann-bundles
 \begin{equation}
 \label{eqn:towerintro}
 \mathcal{M}_\vartheta(Q,\underline{r})=Y_\rho\longrightarrow Y_{\rho-1}\longrightarrow
 \cdots \longrightarrow Y_1\longrightarrow Y_0=\Spec \kk.
 \end{equation}
 where each $Y_{j}$ is a quiver flag variety, and where $Q_0=\{0,1,\dots,\rho\}$ is the vertex set of $Q$.
 \end{theorem}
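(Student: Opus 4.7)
The plan is to construct the tower inductively by peeling off one vertex at a time. Since $Q$ is acyclic with a unique source, I first relabel its vertices $0,1,\dots,\rho$ by a topological sort compatible with the orientation, with the source at position $0$. For $0\leq j\leq\rho$, let $Q^{(j)}$ be the full subquiver of $Q$ on $\{0,1,\dots,j\}$, with dimension vector $\underline{r}^{(j)}=(r_0,\dots,r_j)$; because every $i\geq 1$ receives at least one arrow in $Q$ and, by the ordering, that arrow must come from a vertex of smaller index, $0$ remains the unique source of each $Q^{(j)}$. I would set $Y_j:=\mathcal{M}_{\vartheta^{(j)}}(Q^{(j)},\underline{r}^{(j)})$ for a stability condition $\vartheta^{(j)}$ induced from $\vartheta$ (keeping the entries at vertices $1,\dots,j$ and adjusting the $0$-th component so that $\vartheta^{(j)}\cdot\underline{r}^{(j)}=0$). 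Since $r_0=1$, one gets $Y_0=\Spec\kk$ and $Y_\rho=\mathcal{M}_\vartheta(Q,\underline{r})$.

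Next I would define the map $Y_j\to Y_{j-1}$ as the restriction that forgets the datum at vertex $j$, and exhibit it as a Grassmann bundle. By induction, $Y_{j-1}$ is a fine moduli space and therefore carries tautological locally free sheaves $\mathscr{R}_0=\mathscr{O},\mathscr{R}_1,\dots,\mathscr{R}_{j-1}$ of the prescribed ranks. Set
\[
\mathscr{E}_j \;:=\; \bigoplus_{\substack{a\in Q_1 \\ \head(a)=j}}\mathscr{R}_{\tail(a)},
\]
a locally free sheaf on $Y_{j-1}$ whose fibre at a representation $V$ is $\bigoplus_{\head(a)=j}V_{\tail(a)}$. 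Extending $V$ to a representation of $Q^{(j)}$ amounts to choosing $V_j$ of dimension $r_j$ together with a linear map $\mathscr{E}_{j,V}\to V_j$; quotienting by the $\GL(V_j)$-action on the target replaces this with a rank-$r_j$ quotient of $\mathscr{E}_{j,V}$. The universal property of the quotient Grassmann bundle then should produce a canonical isomorphism $Y_j\cong\Gr(\mathscr{E}_j,r_j)$ over $Y_{j-1}$. Smoothness and projectivity of each $Y_j$ then follow inductively from $Y_0=\Spec\kk$, as does the Mori Dream Space property, which is preserved by Grassmann bundles of locally free sheaves on an MDS.

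The principal obstacle I anticipate is matching up the stability conditions. Concretely, one must verify (a) that every $\vartheta^{(j)}$-stable representation of $Q^{(j)}$ restricts to a $\vartheta^{(j-1)}$-stable representation of $Q^{(j-1)}$ and, crucially, that the induced map $\mathscr{E}_{j,V}\to V_j$ is \emph{surjective}, so that it defines a point of the Grassmannian of quotients; and (b) conversely, that a $\vartheta^{(j-1)}$-stable representation together with any such surjection reassembles to a $\vartheta^{(j)}$-stable representation of $Q^{(j)}$. For (a), a failure of surjectivity would give a proper subrepresentation supported at vertex $j$ whose $\vartheta^{(j)}$-slope is incompatible with stability once $\vartheta^{(j)}$ has been correctly adjusted at the source; for (b), any putative destabilising subrepresentation of the extension must be shown to restrict to a destabilising subrepresentation on $Q^{(j-1)}$ or to contradict the surjectivity. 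The explicit choice of $\vartheta^{(j)}$ (in particular the normalisation at vertex $0$, made possible by $r_0=1$) and the subrepresentation bookkeeping are the technical heart of the argument; once those are in place, iterating the Grassmann-bundle construction from $j=1$ to $j=\rho$ assembles the tower \eqref{eqn:towerintro} and delivers all three conclusions.
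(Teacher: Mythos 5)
Your tower construction matches the paper's closely: the subquivers $Q^{(j)}$ on vertices $\{0,\dots,j\}$, the induced stability condition, and the identification of each step as a quotient Grassmannian of $\mathscr{E}_j=\bigoplus_{\head(a)=j}\mathscr{R}_{\tail(a)}$ are exactly Theorem~\ref{thm:tower}. The gap you flag as ``the technical heart'' --- verifying that forgetting vertex $j$ carries stable points to stable points and conversely --- is dispatched in the paper by Lemma~\ref{lem:generic}, which shows that for the special weight $\vartheta$ (and its truncations $\vartheta^{(j)}$) a point $(w_i)$ is $\vartheta$-stable if and only if every matrix $w_i$ has full rank. With that characterisation the forgetful projections are automatically compatible with the stable loci, and the surjectivity you worry about in (a) is precisely the full-rank condition for $w_j$. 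You should supply that lemma rather than leaving the slope calculation as a promise.

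The genuine gap is the Mori Dream Space claim. You assert that the MDS property ``is preserved by Grassmann bundles of locally free sheaves on an MDS,'' but this is not a known fact, and the paper itself warns against exactly this reasoning: Remark at the end of the paper notes (citing Hering--Musta{\c{t}}{\v{a}}--Payne) that even for a projective bundle over a toric variety, finite generation of the Cox ring is an open question. So inducting on the tower does not deliver the MDS conclusion. The paper instead proves Proposition~\ref{prop:MDS} directly from the GIT description $\mathcal{M}_\vartheta=\Rep(Q,\underline{r})\git_\vartheta G$ using Hu--Keel's criterion: $\Rep(Q,\underline{r})$ is an affine space with trivial class group, semistability equals stability (again Lemma~\ref{lem:generic}), and the unstable locus, cut out by the determinantal ideal $B$ of Proposition~\ref{prop:finemoduli}, has codimension at least two after contracting any vertex with $r_i=s_i=1$. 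You need this direct GIT argument; the Grassmann-bundle induction does not establish the Cox ring is finitely generated. (Smoothness and projectivity, by contrast, \emph{do} follow from your induction, though the paper also obtains them directly in Proposition~\ref{prop:finemoduli} via reduced echelon charts and projectivity over $\Spec\kk[\Rep(Q,\underline{r})]^G=\Spec\kk$.)
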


  \noindent Since quiver flag varieties are Mori Dream Spaces (see Hu--Keel~\cite{HuKeel}), all flips, flops, divisorial contractions and Mori fibre space structures of $\mathcal{M}_\vartheta(Q,\underline{r})$ are obtained by variation of GIT quotient. Put another way, the given moduli construction of a quiver flag variety lends itself well to understanding the birational geometry of  $\mathcal{M}_\vartheta(Q,\underline{r})$.
 
 \medskip
 
 Quiver flag varieties also share algebraic properties with partial flag varieties of type $A$.  For the strongest statement of this kind,  recall that for any locally free sheaf $\mathscr{E}$, the Schur powers of $\mathscr{E}$ are locally free sheaves $\mathbb{S}^\lambda\mathscr{E}$ associated to Young diagrams $\lambda$; examples include the symmetric powers and the alternating products of $\mathscr{E}$. When the fine moduli space $\mathcal{M}_\vartheta(Q,\underline{r})$ is nonempty it carries a collection of tautological bundles $\mathscr{W}_0,\mathscr{W}_1,\dots,\mathscr{W}_\rho$ indexed by the vertex set of $Q$, where the rank of $\mathscr{W}_i$ is equal to the $i$th component $r_i$ of $\underline{r}$, and where $\mathscr{W}_0$ is the trivial bundle. We define integers $s_1,\dots, s_\rho$ depending on both $Q$ and $\underline{r}$ that satisfy $s_i> r_i$ for all $i\geq 1$. Let  $\Young(s_i-r_i,r_i)$ denote the set of Young diagrams with at most $s_i-r_i$ columns and at most $r_i$ rows. 

\begin{theorem}
\label{thm:quivertiltintro}
For any quiver flag variety $\mathcal{M}_\vartheta(Q,\underline{r})$, the locally free sheaf
\[
\mathscr{T}:=\bigoplus_{1\leq i\leq \rho}\bigoplus_{\lambda_i\in \Young(s_i-r_i,r_i)} \mathbb{S}^{\lambda_1}\mathscr{W}_1\otimes \dots \otimes \mathbb{S}^{\lambda_\rho}\mathscr{W}_\rho
\]
is a tilting bundle on $\mathcal{M}_\vartheta(Q,\underline{r})$. In particular, if we write $\modA$ for the abelian category of finitely generated right modules over $\End(\mathscr{T})$, then the functor
 \[
 \Rderived\Hom(\mathscr{T},-)\colon
 D^b\big(\!\coh(\mathcal{M}_\vartheta(Q,\underline{r}))\big)\longrightarrow D^b\big(\!\modA\big)
 \]
 is an exact equivalence of triangulated categories.
 \end{theorem}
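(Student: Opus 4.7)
The plan is to proceed by induction along the tower of Grassmann-bundles furnished by Theorem~\ref{thm:towerintro}. Write $\pi_j\colon Y_j \to Y_{j-1}$ for the successive morphisms; by construction, $Y_j = \Grass_{Y_{j-1}}(\mathscr{V}_j, r_j)$ for some locally free sheaf $\mathscr{V}_j$ on $Y_{j-1}$ assembled from pullbacks of the earlier tautological bundles via the arrows of $Q$ terminating at vertex $j$. A preliminary combinatorial step identifies the integer $s_j$ with $\rank(\mathscr{V}_j)$, so that $\Young(s_j-r_j,r_j)$ indexes exactly the Schur powers appearing in Kapranov's tilting collection for the fibres of $\pi_j$.

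The central input is Kapranov's tilting theorem for Grassmannians in its relative form: for $\mathscr{V}$ locally free of rank $s$ on a smooth base $Z$, the bundle
\[
\mathscr{F} \;:=\; \bigoplus_{\lambda \in \Young(s-r, r)} \mathbb{S}^\lambda \mathscr{W}
\]
on $\pi\colon \Grass_Z(\mathscr{V},r) \to Z$ is a relative tilting bundle: the higher direct images $R^k\pi_*\mathcal{H}om(\mathscr{F},\mathscr{F})$ vanish for $k>0$, the zeroth pushforward is a locally free sheaf built from Schur powers of $\mathscr{V}$, and the summands generate $D^b(\coh(Y))$ relative to $D^b(\coh(Z))$. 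This follows from Kapranov's original statement by a fibrewise Borel--Weil--Bott computation.

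Assuming inductively that $\mathscr{T}_{j-1}$ is a tilting bundle on $Y_{j-1}$ of the prescribed form, set $\mathscr{T}_j := \pi_j^*\mathscr{T}_{j-1} \otimes \mathscr{F}_j$. The projection formula together with the relative vanishing above yields
\[
\Ext^k_{Y_j}(\mathscr{T}_j,\mathscr{T}_j) \;\cong\; \Ext^k_{Y_{j-1}}\bigl(\mathscr{T}_{j-1},\, \mathscr{T}_{j-1} \otimes \pi_{j*}\mathcal{H}om(\mathscr{F}_j,\mathscr{F}_j)\bigr),
\]
reducing higher vanishing on $Y_j$ to a statement on $Y_{j-1}$. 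For the induction to close one strengthens the inductive hypothesis so that $\mathscr{T}_{j-1}$ has vanishing higher $\Ext$ against all tensor twists by bundles of the form $\pi_{j*}\mathcal{H}om(\mathscr{F}_j,\mathscr{F}_j)$, which decompose into Schur products of the earlier tautological bundles $\mathscr{W}_i$ via Littlewood--Richardson. Generation of $D^b(\coh(Y_j))$ follows from the principle that a pulled-back generator tensored with a relative generator generates the total space. Iterating down the tower produces the stated $\mathscr{T}$ on $Y_\rho = \mathcal{M}_\vartheta(Q,\underline{r})$, and the derived equivalence with $D^b(\modA)$ is the formal consequence of Bondal--Keller tilting theory.

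The chief obstacle is combinatorial rather than geometric: one must verify that at each stage the pushforwards $\pi_{j*}\mathcal{H}om(\mathscr{F}_j,\mathscr{F}_j)$ decompose into Schur powers of the $\mathscr{W}_i$ that are themselves summands of $\mathscr{T}_{j-1}$, so that the strengthened inductive hypothesis is preserved. This is where the specific index set $\Young(s_j-r_j,r_j)$ — rather than a larger range of Young diagrams — plays an essential role, since it is exactly the Kapranov range that keeps the widths of the Schur tensor products under control at every step of the tower.
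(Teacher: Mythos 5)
Your overall strategy — induct along the Grassmann-bundle tower, invoke Kapranov's tilting theorem on the fibre, reduce $\Ext$-vanishing to the base via the projection formula, control the resulting Schur powers by Littlewood--Richardson, and finally cite tilting theory for the derived equivalence — is exactly the route taken in the paper (Lemma~\ref{lem:Rpushforward}, Proposition~\ref{prop:quivervanishing}, Theorem~\ref{thm:quivertilt} and its corollary). The identification of the numerical parameter $s_j$ with $\rank \mathscr{F}_j$ and the role of $\Young(s_j - r_j, r_j)$ is also correct.

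However, the resolution you propose for the ``chief obstacle'' is not the one that actually works, and as stated it would not close the induction. You claim one should verify that the pushforwards $\pi_{j*}\mathscr{H}\emph{om}(\mathscr{F}_j, \mathscr{F}_j)$ ``decompose into Schur powers of the $\mathscr{W}_i$ that are themselves summands of $\mathscr{T}_{j-1}$.'' This is false. Taking $\mathscr{H}\emph{om}(\mathbb{S}^\nu \mathscr{W}_i, \mathbb{S}^\mu \mathscr{W}_i)$ for $\nu,\mu \in \Young(s_i-r_i,r_i)$ produces Schur powers $\mathbb{S}^\lambda \mathscr{W}_i$ whose exponents $\lambda$ have entries in the range $[-(s_i - r_i), s_i - r_i]$, hence \emph{not} partitions in $\Young(s_i-r_i,r_i)$ once any entry is negative. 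Similarly, the Littlewood--Richardson decomposition of $\mathbb{S}^\kappa \mathscr{F}_j$ for $\mathscr{F}_j = \bigoplus \mathscr{W}_{\tail(a)}$ produces Schur products $\bigotimes_i \mathbb{S}^{\mu_i}\mathscr{W}_i$ in which the widths $\mu_{i,1}$ can exceed $s_i - r_i$, so these are not summands of $\mathscr{T}_{j-1}$ either. The correct way to close the induction — and what the paper proves as Proposition~\ref{prop:quivervanishing} — is a genuinely stronger cohomology vanishing statement: $H^{>0}(\mathcal{M}_\vartheta, \mathbb{S}^{\lambda_1}\mathscr{W}_1 \otimes \cdots \otimes \mathbb{S}^{\lambda_\rho}\mathscr{W}_\rho) = 0$ for \emph{all} integer sequences $\lambda_i$ bounded below by $-(s_i - r_i)$, with no upper bound. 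This wider class is closed under the Littlewood--Richardson decompositions that the projection formula produces, whereas the collection of tilting summands is not. You have therefore correctly spotted where the combinatorics bites, but the statement you propose to verify there is untrue; you need to prove the wider vanishing result instead, and then apply it to the (generally non-summand) Schur powers arising from internal $\mathscr{H}\emph{om}$.
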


 The proof relies on a vanishing theorem for appropriate Schur powers of the tautological bundles (see Proposition~\ref{prop:quivervanishing}). The special case where the quiver is a chain with only one arrow between adjacent vertices recovers Kapranov's result ~\cite{Kapranov2} for partial flag varieties of type $A$.
 
 It is worth making a few remarks about this result. Firstly, quiver flag varieties are not Fano in general, so Theorem~\ref{thm:quivertiltintro} provides a large class of varieties with explicit tilting bundles that are nevertheless not Fano. Secondly, the moduli construction gives $\mathcal{M}_\vartheta(Q,\underline{r})$ as the geometric quotient of a Zariski-open subset of a vector space by the linear action of a reductive group, so Theorem~\ref{thm:quivertiltintro} provides further evidence for the following conjecture of King~\cite{King2}:
 
\begin{conjecture}
\label{conj:King}
Any smooth complete variety obtained as the geometric quotient of a Zariski-open subset of a vector space by the linear action of a reductive group has a tilting bundle.
\end{conjecture}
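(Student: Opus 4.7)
Since King's conjecture is a major open problem, the outline below is an attack strategy rather than a guaranteed route. My plan is to exploit the GIT presentation $X = V^{ss}(G,L)/G$ and build a candidate tilting bundle as a direct sum of descents of irreducible $G$-representations. Concretely, for each $\rho \in \Irr(G)$ the trivial equivariant bundle $\mathscr{O}_{V^{ss}} \otimes \rho$ descends to a locally free sheaf $\mathscr{E}_\rho$ on $X$, and I would seek a finite window $W \subset \Irr(G)$ such that $\mathscr{T} = \bigoplus_{\rho \in W} \mathscr{E}_\rho$ is tilting.

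First I would identify $W$ by extrapolating from the known cases. In every setting where the conjecture is currently verified---toric varieties, partial flag varieties of type $A$, products of Grassmannians, and the quiver moduli of Theorem~\ref{thm:quivertiltintro} above---the window is determined by a combinatorial condition imposed by the Kempf--Hesselink--Ness stratification of $V \setminus V^{ss}$ together with the linearisation $L$: roughly, the $T$-weights of each $\rho \in W$, when restricted to a destabilising one-parameter subgroup of $G$, must lie in a half-open interval dictated by $L$. Next I would verify the tilting conditions for this $\mathscr{T}$. For the vanishing $\Ext^k_X(\mathscr{T},\mathscr{T}) = 0$ with $k > 0$, descent reduces the computation to $G$-invariants on the affine space $V$; using the Koszul resolution of $V^{ss} \hookrightarrow V$, I would need to show that the $G$-invariant parts of $\rho \otimes (\rho')^\vee \otimes \Sym^\bullet V^\vee$ vanish in the appropriate bidegrees whenever $\rho, \rho' \in W$. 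For generation, I would aim to construct a $(G \times G)$-equivariant resolution of the structure sheaf of the diagonal $\Delta \subset V^{ss} \times V^{ss}$ whose terms involve only window representations, which descends to a Beilinson-type resolution of $\mathscr{O}_\Delta$ on $X \times X$.

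The hard part---and the reason the conjecture remains open---will be controlling both the Ext-vanishing and the generation uniformly across all reductive $G$ and all linearisations $L$. The interaction between the Borel--Weil--Bott theorem for nonabelian $G$ and the Kempf stratification becomes delicate as soon as $G$ is non-toric, and even when the correct window is identified, producing the requisite equivariant resolution of the diagonal appears to demand a representation-theoretic refinement adapted to $L$ that is not presently available. A more modest intermediate target---which would already constitute significant progress---would be to settle the conjecture under the additional assumption that semistability equals stability, where Luna's slice theorem simplifies the descent considerably, and where the stratification of the unstable locus carries enough combinatorial information to pin down $W$ explicitly.
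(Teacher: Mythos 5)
This statement is a conjecture of King that the paper explicitly does \emph{not} prove; it is recorded as motivation, and the paper's Theorem~\ref{thm:quivertiltintro} supplies further evidence for it by establishing the special case of quiver flag varieties. You correctly recognize this and accordingly offer a strategy rather than a proof, so there is no error to flag in the logical sense. It is worth noting, though, that your proposed framework is closely aligned with what the paper achieves in the case it can handle: the tilting bundle of Theorem~\ref{thm:quivertilt} is precisely a direct sum of locally free sheaves $\mathbb{S}^{\lambda_1}\mathscr{W}_1\otimes\cdots\otimes\mathbb{S}^{\lambda_\rho}\mathscr{W}_\rho$ that descend from irreducible representations of $G\cong\prod_{1\leq i\leq\rho}\GL(r_i)$, and the ``window'' is the set of tuples of Young diagrams $\lambda_i\in\Young(s_i-r_i,r_i)$, with bounds $s_i-r_i$ dictated by the geometry of the unstable locus---so the shape of your candidate $\mathscr{T}$ is exactly right in this class of examples. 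Where the approaches diverge is in the verification. Rather than attacking the $\Ext$-vanishing via Koszul resolutions of the open inclusion $V^{ss}\hookrightarrow V$ and generation via an equivariant resolution of the diagonal on $V^{ss}\times V^{ss}$, the paper uses the structure theorem exhibiting $\mathcal{M}_\vartheta$ as a tower of Grassmann bundles (Theorem~\ref{thm:tower}): it pushes forward one fibre step at a time, combining Kapranov's vanishing for a single Grassmannian, the Littlewood--Richardson rules, and the degeneration of the Leray spectral sequence (Lemma~\ref{lem:Rpushforward} and Proposition~\ref{prop:quivervanishing}), and obtains fullness by invoking Orlov's semi-orthogonal decomposition for Grassmann bundles at each stage. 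This inductive, geometric argument is available only because of the special fibred structure of quiver flag varieties and gives no purchase on the general conjecture; your closing paragraphs assess that obstruction accurately.
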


\noindent Finally, in the special case when $\mathcal{M}_\vartheta(Q,\underline{r})$ is a toric  quiver variety, the tilting bundle $\mathscr{T}$ is simply the direct sum of the line bundles 
 \begin{equation}
 \label{eqn:torictiltintro}
 \big\{\mathscr{W}_1^{\theta_1}\otimes \dots \otimes \mathscr{W}_\rho^{\theta_\rho} : 0\leq \theta_i < s_i \text{ for } 1\leq i\leq \rho\big\},
 \end{equation}
 and the vanishing result is optimal (see Remark~\ref{rem:toricvanishing}). In the special case when $\mathcal{M}_\vartheta(Q,\underline{r})$ is a Fano toric variety, Altmann--Hille~\cite{AltmannHille} showed that the tautological bundles $(\mathscr{W}_0,\dots,\mathscr{W}_\rho)$ form a strongly exceptional sequence. Theorem~\ref{thm:quivertiltintro}, and more specifically, the set of line bundles from \eqref{eqn:torictiltintro}, extends this sequence to a full, strongly exceptional sequence. While we do not need the Fano assumption,  our quivers are required to have a unique source.

 \medskip
  
The secondary goal of this paper is to demonstrate that quiver flag varieties arise naturally as ambient spaces in algebraic geometry,  generalising the classical linear series of a line bundle or, more generally, the Grassmannian associated to a vector bundle of higher rank. We define a sequence of distinct locally free sheaves $\underline{\mathscr{E}} = (\mathscr{O}_X,\mathscr{E}_1,\dots, \mathscr{E}_\rho)$ on a projective scheme $X$ to be `weakly exceptional' if $H^0(\mathscr{E}_i)\neq 0$ for $i>0$ and $\Hom(\mathscr{E}_j,\mathscr{E}_i)=0$ for $j>i$.  Any such collection defines a finite, acyclic quiver $Q$ with a unique source; the inclusion of $\mathscr{O}_X$ forces $Q$ to have a unique source and provides a geometric interpretation of the framing of a quiver, see Remark~\ref{rem:framings2}.  The multigraded linear series is defined to be the quiver flag variety $\vert \underline{\mathscr{E}}\vert:=\mathcal{M}_\vartheta(Q,\underline{r})$ for the dimension vector $\underline{r}=(1,\rank(\mathscr{E}_1),\dots, \rank(\mathscr{E}_\rho))$.  Examples include the Grassmannian of rank $r$ quotients of a locally free sheaf $\mathscr{E}$, and the multilinear series of a sequence of effective line bundles on a projective toric variety from Craw--Smith~\cite{CrawSmith}.

 \begin{theorem}
 \label{thm:morphismintro}
 Let $\underline{\mathscr{E}}=(\mathscr{O}_X,\mathscr{E}_1,\dots, \mathscr{E}_\rho)$ be a weakly exceptional sequence of globally generated locally free sheaves on a projective scheme $X$.  There is a morphism $\varphi_{\vert\underline{\mathscr{E}}\vert} \colon X\rightarrow
 \vert\underline{\mathscr{E}}\vert$ that satisfies $\varphi_{\vert\underline{\mathscr{E}}\vert}^*(\mathscr{W}_i)=\mathscr{E}_i$ for all $0\leq
 i\leq \rho$, where $\mathscr{W}_0,\dots.\mathscr{W}_\rho$ are the tautological bundles on $\vert\underline{\mathscr{E}}\vert$
 \end{theorem}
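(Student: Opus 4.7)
The plan is to construct a flat family of $\vartheta$-stable representations of $Q$ parametrised by $X$ and then invoke the universal property of the fine moduli space $\vert\underline{\mathscr{E}}\vert=\mathcal{M}_\vartheta(Q,\underline{r})$.

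First, I would write down the family explicitly. By the construction of $(Q,\underline{r})$ from $\underline{\mathscr{E}}$, the arrows from vertex $j$ to vertex $i$ are indexed by a chosen basis of $\Hom(\mathscr{E}_j,\mathscr{E}_i)$; acyclicity follows from the hypothesis that $\Hom(\mathscr{E}_j,\mathscr{E}_i)=0$ for $j>i$, and the sheaf $\mathscr{O}_X$ at vertex $0$ supplies the unique source. For each arrow $a\colon j\to i$ of $Q$, let $\varphi_a\colon \mathscr{E}_j\to\mathscr{E}_i$ denote the morphism of sheaves corresponding to the basis element labelled by $a$. Then $(\mathscr{E}_0,\mathscr{E}_1,\dots,\mathscr{E}_\rho;\{\varphi_a\})$ is a representation of $Q$ in the category of locally free sheaves on $X$, and hence a flat family over $X$ of $\kk$-representations of $Q$ with dimension vector $\underline{r}=(1,\rank(\mathscr{E}_1),\dots,\rank(\mathscr{E}_\rho))$.

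Second, I would verify fibrewise $\vartheta$-stability. The stability condition used to define the quiver flag variety is set up so that, with $V_0=\kk$, a representation is $\vartheta$-stable precisely when each $V_i$ is spanned by the images of all paths from vertex $0$, i.e.\ when the representation is cyclic from the source. At a closed point $x\in X$, the fibre representation has vector spaces $(\mathscr{E}_i)_x$ and linear maps $(\varphi_a)_x$; the arrows $0\to i$ correspond to a basis of $H^0(\mathscr{E}_i)=\Hom(\mathscr{O}_X,\mathscr{E}_i)$, and their images jointly span $(\mathscr{E}_i)_x$ precisely because $\mathscr{E}_i$ is globally generated. Hence every fibre representation is generated from vertex $0$ and so is $\vartheta$-stable.

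Third, the defining universal property of the fine moduli space $\vert\underline{\mathscr{E}}\vert$ then produces a unique morphism $\varphi_{\vert\underline{\mathscr{E}}\vert}\colon X\to\vert\underline{\mathscr{E}}\vert$ whose pullback of the universal family recovers the family just constructed. Since the universal family at vertex $i$ is the tautological bundle $\mathscr{W}_i$, this gives $\varphi_{\vert\underline{\mathscr{E}}\vert}^*(\mathscr{W}_i)=\mathscr{E}_i$ for all $0\leq i\leq \rho$, as required. The main obstacle is the fibrewise stability step, since it is the only point at which the global generation hypothesis is used and it depends on the characterisation of $\vartheta$-stability as cyclicity from the source; once that is established, the remainder of the argument is a direct application of the moduli-theoretic framework developed earlier in the paper.
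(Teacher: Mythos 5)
The high-level strategy is the same as the paper's: construct a flat family of representations of the quiver of sections on $X$, check fibrewise $\vartheta$-stability, and invoke the representability of the moduli functor. The characterisation of $\vartheta$-stability as ``cyclic from the source'' is correct and equivalent to Lemma~\ref{lem:generic}.

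However, there is a genuine gap in the stability verification, and it stems from a misreading of the quiver of sections. You write that the arrows from $j$ to $i$ are indexed by a basis of $\Hom(\mathscr{E}_j,\mathscr{E}_i)$, and in particular that the arrows $0\to i$ ``correspond to a basis of $H^0(\mathscr{E}_i)=\Hom(\mathscr{O}_X,\mathscr{E}_i)$''. This is false: by definition, the arrows from $j$ to $i$ are indexed by a basis of the quotient $\Hom(\mathscr{E}_j,\mathscr{E}_i)/V_{j,i}$, where $V_{j,i}$ is the span of morphisms factoring through some intermediate $\mathscr{E}_k$. In particular, the morphisms $f_a\colon\mathscr{O}_X\to\mathscr{E}_i$ associated to arrows $a\colon 0\to i$ alone need not generate $\mathscr{E}_i$, and global generation of $\mathscr{E}_i$ does not directly give surjectivity of $\bigoplus_{a\colon 0\to i}(f_a)_x\colon\kk^{n_{0,i}}\to E_i$. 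To repair this you must argue with \emph{paths} rather than arrows. Concretely: choosing a lift $f_a$ for each arrow $a$, an induction on $j-i$ using the factoring definition of $V_{i,j}$ (and acyclicity) shows that the compositions of the $f_a$ along paths from $i$ to $j$ span $\Hom(\mathscr{E}_i,\mathscr{E}_j)$; for $i=0$ this says the induced $\kk$-algebra map $\kk Q\to\End(\bigoplus\mathscr{E}_i)$ restricts to a surjection $e_0(\kk Q)e_i\twoheadrightarrow H^0(\mathscr{E}_i)$. Combined with global generation, the images at $x$ of all path maps $0\to i$ span $E_i$, and since every such path ends in an arrow with head $i$ this is precisely the full-rank condition of Lemma~\ref{lem:generic}. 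The paper makes the corresponding careful choices (working down from the top vertex, decomposing sections, and adding further sections to fill out cokernels) precisely to avoid this pitfall. With the arrows-versus-paths correction the remainder of your argument --- openness of stability, the universal property giving $\varphi_{\vert\underline{\mathscr{E}}\vert}$, and $\varphi_{\vert\underline{\mathscr{E}}\vert}^*(\mathscr{W}_i)=\mathscr{E}_i$ --- goes through as stated.
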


As an application, we define the multigraded Pl\"{u}cker embedding of a quiver flag variety $\mathcal{M}_\vartheta$ to be the morphism $\varphi_{\vert\underline{\det}(\mathscr{W})\vert}$ determined by the sequence  $\underline{\det}(\mathscr{W}) = \big(\mathscr{O}_{\mathcal{M}_\vartheta},\det(\mathscr{W}_1),\dots,\det(\mathscr{W}_\rho)\big)$; this is a closed immersion, and generalises the classical morphism for the Grassmannian.  We conjecture that  the Cox ring of $\mathcal{M}_\vartheta$ is obtained as a quotient of the Cox ring of $\vert\underline{\det}(\mathscr{W})\vert$. Put geometrically, the multigraded Pl\"{u}cker embedding $\varphi_{\vert\underline{\det}(\mathscr{W})\vert}$ is such that all morphisms obtained by running the Mori programme on $\mathcal{M}_\vartheta$ are obtained by restriction from those of $\vert\underline{\det}(\mathscr{W})\vert$. It would be interesting to find a nice presentation for the Cox ring, and hence to compute explicitly the ideal that cuts out the image of the multigraded Pl\"{u}cker embedding.

 We now describe the structure of the paper. Section~\ref{sec:two} gives the moduli construction of quiver flag varieties and establishes some of their basic geometric properties. The description as Mori Dream Spaces and the structure theorem for quiver flag varieties, which together provide the ingredients for Theorem~\ref{thm:towerintro}, are presented in Section~\ref{sec:three}. The derived category results, including the key vanishing result and the construction of the tilting bundle from Theorem~\ref{thm:quivertiltintro}, are proven in Section~\ref{sec:four}.  Finally, in Section~\ref{sec:five} we introduce the multigraded linear series and prove Theorem~\ref{thm:morphismintro}, concluding with the construction of the multigraded Pl\"{u}cker embedding. 
 
\medskip

 \noindent \textbf{Notation and conventions:} We identify throughout a vector bundle and its locally free sheaf of sections. For a locally free sheaf $\mathscr{F}$
 on a scheme $Y$, and for a positive integer $r< \rank(\mathscr{F})$, we
 let $\Gr(\mathscr{F},r)$ denote the Grassmannian parameterising
 quotients of $\mathscr{F}$ that are locally free of rank $r$. As a
 special case we write $\mathbb{P}^*(\mathscr{F}):= \Gr(\mathscr{F},1)$. For a locally free sheaf $\mathscr{E}$ or $\mathscr{W}$ on a scheme $X$, its fibre over a given closed point $x\in X$ is typically denoted $E$ or $W$ respectively. For $c,r\in \NN$, let $\Mat(r\times c,\kk)$ denote the space of $r\times c$ matrices with coefficients in $\kk$. Given a finite subset $S$ of a lattice $\ZZ^d$, write $\pos(S)=\big{\{}\sum_{s\in S} \lambda_s s \in \QQ^d : \lambda_s\geq 0\big{\}}$ for the rational cone generated by $S$.
 
  \medskip

 \noindent \textbf{Acknowledgements.} This article was written during
 the programme in algebraic geometry at MSRI in 2009, and I am grateful to the
 organisers for establishing a stimulating atmosphere. I owe special
 thanks to Shigeru Mukai whose talk at MSRI provided the catalyst for this
 project, and to Valery Alexeev for a very helpful discussion. Thanks also to Ana-Maria Castravet, Rob Lazarsfeld, Nick Proudfoot, Greg Smith, and to the anonymous referee for making several helpful comments. The author was supported in part by EPSRC.
 
 \section{Quiver flag varieties}
  \label{sec:two}
 We begin by constructing quiver flag varieties as fine moduli spaces of quiver representations. Examples include partial flag varieties of type $A$, products of Grassmannians, and certain toric quiver varieties. We work over an algebraically closed field $\kk$.
 
 Let $Q$ be a finite, acyclic quiver with a unique source. The quiver
 is encoded by a finite set of vertices $Q_0$, a finite set of arrows
 $Q_1$, and a pair of maps $\head, \tail\colon Q_1\to Q_0$ indicating
 the head and tail of each arrow. Write the vertex set as
 $Q_0=\{0,1,\dots, \rho\}$ where $0$ is the source.  Since $Q$ is
 acyclic, we may assume that for each pair $i, j\in Q_0$, the number
 $n_{i,j}$ of arrows with tail at $i$ and head at $j$ is zero whenever
 $i>j$. Let $\kk Q$ denote the path algebra of $Q$. Each vertex $i\in Q_0$ determines an idempotent $e_i\in \kk Q$, and the subspace $e_i(\kk Q)e_j$ of $\kk Q$ is spanned by paths with tail at $i\in Q_0$ and head at $j\in Q_0$.
 
 A representation of the quiver $Q$ consists of a $\kk$-vector space
 $W_i$ for $i\in Q_0$ and a $\kk$-linear map $w_a \colon
 W_{\tail(a)} \to W_{\head(a)}$ for $a \in Q_1$.  We often write
 $W$ as shorthand for $\big((W_i)_{i\in Q_0},(w_a)_{a\in Q_1}\big)$.
 The dimension vector of $W$ is the vector $\underline{r}\in
 \ZZ^{\rho+1}$ with components $r_i = \dim_\kk (W_i)$ for $0\leq i\leq
 \rho$. A map of representations $\psi\colon W\to W^\prime$ is a
 family $\psi_{i} \colon W_i^{\,} \to W_i^\prime$ of $\kk$-linear maps
 for $0\leq i\leq \rho$ satisfying $w_a^\prime \psi_{\tail(a)} =
 \psi_{\head(a)} w_a$ for $a \in Q_1$.  With composition defined
 componentwise, we obtain the abelian category of finite dimensional
 representations of $Q$. To define the appropriate stability notion,
 consider $\theta \in \QQ^{\rho+1}$ and associate to each
 representation $W$ the rational number $\theta(W) := \sum_{0\leq
   i\leq \rho} \theta_i\dim_\kk(W_i)$.  Following King~\cite{King}, we
 say that $W$ is $\theta$-semistable if $\theta(W)=0$ and every
 subrepresentation $W^\prime \subset W$ satisfies
 $\theta(W^\prime)\geq 0$. Moreover, $W$ is $\theta$-stable if the
 only subrepresentations $W^\prime$ with $\theta(W^\prime)=0$ are 0
 and $W$.

 Once and for all, set $r_0=1$, choose positive integers $r_1,\dots,
 r_\rho$ and set $\underline{r}=(1,r_1,\dots, r_\rho)$. The space of
 representations of $Q$ of dimension vector $\underline{r}$ is the
 $\kk$-vector space
 \begin{equation}
 \label{eqn:finegrading}
 \Rep(Q,\underline{r}) = \bigoplus\limits_{a \in Q_1} \Hom_{\kk}\big(\kk^{r_{\tail(a)}}, \kk^{r_{\head(a)}}\big).
 \end{equation}
 Isomorphism classes of representations correspond one-to-one to
 orbits in $\Rep(Q,\underline{r})$ under the action of the group
 $\prod_{0\leq i\leq \rho} \GL(r_i)$ induced by change of basis;
 explicitly, for a group element $g=(g_i)$ and a point $w = (w_a)$ we have $(g\cdot w)_a =
 g_{\head(a)}w_ag_{\tail(a)}^{-1}$. The diagonal one-dimensional torus
 $\kk^\times$ acts trivially, leaving a faithful action of the
 quotient group
 \[
 G= G(\underline{r}):=\Big( \prod_{0\leq i\leq \rho}
 \GL(r_i)\Big)/\kk^\times
 \]
 on $\Rep(Q,\underline{r})$.  Write $G^\vee = \{\theta = (\theta_i)\in
 \ZZ^{\rho+1} : \sum_{0\leq i\leq \rho} \theta_i r_i = 0\}$ for the group of characters of $G$. For any character $\theta\in G^\vee$,
 King~\cite[Proposition~3.1]{King} proved that a point $(w_a)\in
 \Rep(Q,\underline{r})$ is $\theta$-stable in the sense of Geometric
 Invariant Theory if and only if the corresponding representation $W=
 \big((W_i)_{i\in Q_0},(w_a)_{a\in Q_1}\big)$ is $\theta$-stable in
 the sense defined above. 
 
 It is often convenient to consider a coarser decomposition of
 $\Rep(Q,\underline{r})$ than that in \eqref{eqn:finegrading}.  For
 $1\leq i\leq \rho$ we define the subspace $\Rep(Q,\underline{r})_i:=
 \bigoplus_{\{a \in Q_1: \: \head(a)=i\}}
 \Hom_{\kk}\big(\kk^{r_{\tail(a)}}, \kk^{r_i}\big)$, giving
 \begin{equation}
 \label{eqn:coarsegrading}
 \Rep(Q,\underline{r}) = \bigoplus\limits_{1\leq i\leq \rho}
 \Rep(Q,\underline{r})_i.
 \end{equation}
 In decomposition \eqref{eqn:finegrading} we write points of
 $\Rep(Q,\underline{r})$ as $(w_a)$ where each
 $w_a$ is a matrix with $r_{\head(a)}$ rows, and in decomposition
 \eqref{eqn:coarsegrading} we write $(w_i)$ where each
 $w_i$ is a matrix with $r_i$ rows.

 The key to the construction of quiver flag varieties is the choice of the \emph{special weight vector}
 \[\vartheta:= \Big(-\sum_{i=1}^{\rho} r_i, 1, 1, \dots,
   1\Big)\in G^\vee.
   \]
 \begin{lemma}
 \label{lem:generic}
  The following are equivalent for the special weight vector $\vartheta\in G^\vee$:
 \begin{enumerate}
 \item[\one] the representation $(w_i)\in \Rep(Q,\underline{r})$ is $\vartheta$-stable;
 \item[\two]  the representation $(w_i)\in \Rep(Q,\underline{r})$ is $\vartheta$-semistable;
 \item[\three] for each $1\leq i\leq \rho$, the matrix $w_i$ has full rank,
   that is, $\rank(w_i)=r_i$.
 \end{enumerate}
 \end{lemma}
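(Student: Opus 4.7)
The proof plan is to observe that the implication \one$\Rightarrow$\two\ is immediate from the definitions, so it suffices to establish \two$\Rightarrow$\three\ and \three$\Rightarrow$\one. The engine driving everything is a direct calculation of $\vartheta(W')$ on any subrepresentation $W'\subseteq W$: writing $\underline{r}'=(r_0',r_1',\dots,r_\rho')$ for the dimension vector of $W'$ and using $r_0=1$, one finds
\[
\vartheta(W') = -r_0'\sum_{i=1}^{\rho} r_i + \sum_{i=1}^{\rho} r_i'.
\]
Since $r_0'\in\{0,1\}$ this splits into two regimes: if $r_0'=0$ then $\vartheta(W')=\sum_{i\geq 1} r_i'\geq 0$ with equality iff $W'=0$; if $r_0'=1$ then $\vartheta(W')=\sum_{i\geq 1}(r_i'-r_i)\leq 0$ with equality iff $W'=W$. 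From this dichotomy it follows at once that the only $W'$ with $\vartheta(W')=0$ are $0$ and $W$, so $\vartheta$-semistability automatically upgrades to $\vartheta$-stability, giving \two$\Rightarrow$\one.

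For \two$\Rightarrow$\three\ I argue by contrapositive: suppose some $w_i$ fails to have rank $r_i$. Using the acyclic ordering $0<1<\dots<\rho$ (so every arrow into $i$ has tail strictly less than $i$), define inductively a subrepresentation $W^{\min}\subseteq W$ by $W_0^{\min}=W_0$ and
\[
W_i^{\min} \;=\; \sum_{a\in Q_1,\;\head(a)=i} w_a\bigl(W^{\min}_{\tail(a)}\bigr) \quad\text{for } i\geq 1.
\]
This is a well-defined subrepresentation by construction, with $r_0^{\min}=1$. If every $w_j$ for $j<i$ were surjective onto $W_j$, the inductive hypothesis would give $W_j^{\min}=W_j$ for $j<i$, and then $W_i^{\min}$ would equal the image of the full map $w_i$; the failure of $w_i$ to have full rank therefore forces $W_i^{\min}\subsetneq W_i$ at the first bad index $i$. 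Thus $W^{\min}$ is a subrepresentation with $r_0^{\min}=1$ but $r_i^{\min}<r_i$, so by the dichotomy $\vartheta(W^{\min})<0$, contradicting semistability.

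For \three$\Rightarrow$\one, assume every $w_i$ has full rank and let $W'\subseteq W$ be a subrepresentation. The case $r_0'=0$ is handled by the dichotomy. If $r_0'=1$, then $W_0'=W_0$, and I proceed by induction on $i$: assuming $W_j'=W_j$ for all $j<i$, the fact that $W'$ is a subrepresentation gives $w_a(W_{\tail(a)}')\subseteq W_i'$ for every arrow $a$ with $\head(a)=i$. Since $\tail(a)<i$, we have $W'_{\tail(a)}=W_{\tail(a)}$, so the image of $w_i$, which equals $W_i$ by the full rank hypothesis, lies inside $W_i'$; hence $W_i'=W_i$. By induction $W'=W$, and stability follows.

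The main technical point is the inductive use of the acyclic ordering in both directions: the unique source together with the ordering $\tail(a)<\head(a)$ is exactly what allows $W^{\min}$ to be defined (and exhaust $W$) vertex by vertex. No single step is hard, but care is needed to make sure the induction is genuinely available at every stage; this is where the hypothesis $r_0=1$ and the existence of a unique source are doing real work.
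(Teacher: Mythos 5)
Your proof is correct, and the overall strategy agrees with the paper's: compute $\vartheta$ on subrepresentations, exploit $r_0=1$, and induct along the acyclic ordering. There are two genuine (if mild) differences worth noting. First, you isolate an explicit ``dichotomy'' on $\vartheta(W')$ according to whether $r_0'=0$ or $r_0'=1$; this makes the equivalence \one$\Leftrightarrow$\two\ immediate on its own and cleanly repackages the sign information the paper uses implicitly at several points. Second, for \two$\Rightarrow$\three\ the paper constructs a simpler destabilizing subrepresentation: it keeps $W_j'=W_j$ at every $j\neq i$ and only replaces $W_i'$ by $\Image(w_i)$, then reads off $\vartheta(W')=-\dim_\kk(\Coker(w_i))<0$ directly. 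Your minimal subrepresentation $W^{\min}$ generated at the source works equally well but requires the ``first bad index'' induction to certify that it is proper; the paper's one-vertex modification avoids that extra step (and is obviously a subrepresentation once one notes acyclicity ensures arrows leaving $i$ go to $j\neq i$). Both arguments for \three$\Rightarrow$\one\ are essentially identical. In short: correct, with a cleaner global framing from the dichotomy but a slightly heavier choice of destabilizing subrepresentation.
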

 \begin{proof}
   Let $W$ denote the representation of $Q$ corresponding to the point $(w_i)\in \Rep(Q,\underline{r})$. 
   Since $r_0=1$ we have $\vartheta(W) = 0$ and implication
   \one$\implies$\two\ follows. For \two$\implies$\three, suppose for
   a contradiction that $W$ is $\vartheta$-semistable and
   $\rank(w_i)<r_i$ for some $i$.  For $j\in Q_0$, define a
   $\kk$-vector space $W^\prime_j$ by setting $W^\prime_j:=W_j$ for
   $j\neq i$, and $W^\prime_i:= \Image(w_i)$. In addition, for $a\in
   Q_1$, define $\kk$-linear maps $w^\prime_a\colon
   W_{\tail(a)}^\prime\to W_{\head(a)}^\prime$ by setting $w_a^\prime
   = w_a$ for $\tail(a)\neq i$; when $\tail(a)=i$, set $w_a^\prime$ to
   be the restriction of $w_a\colon W_i\to W_{\head(a)}$ to
   $\Image(w_i)\subset W_i$. The resulting subrepresentation
   $W^\prime$ of $W$ is proper since $W_i^\prime\neq W_i$ and,
   moreover,
 \[
 \vartheta(W^\prime) = \vartheta(W) - \vartheta_i
 \dim_\kk(\Coker(w_i)) = -\dim_\kk(\Coker(w_i)) < 0
 \]
 which is a contradiction. For \three$\implies$\one, suppose each
 matrix $w_i$ has full rank and let $W^\prime$ be a nonzero
 subrepresentation of $W$ satisfying $\vartheta(W^\prime)\leq 0$. To
 prove that $W$ is $\vartheta$-stable we show that $W^\prime=W$. The
 only nonpositive entry of $\vartheta$ is $\vartheta_0$, so
 $\dim(W_0^\prime)>0$ and hence $W_0^\prime = W_0$. Suppose 
 $W^\prime_i=W_i$ for $i<j$.  Since $W^\prime$ is a subrepresentation
 of $W$, the $\kk$-linear maps satisfy $w^\prime_a = w_a \colon
 W_{\tail(a)}\to W_{\head(a)}$ for $\tail(a)<j$. This gives
 $(w_j^\prime) = (w_j)$ in decomposition \eqref{eqn:coarsegrading}.
 The fact that $\rank(w_j)=r_j$ forces $W^\prime_j=W_j$. The result follows
 by induction on $j$.
\end{proof}

  It follows that a $\vartheta$-stable representation of $Q$ of
  dimension vector $\underline{r}$ exists when 
  \begin{equation}
 \label{eqn:numerical}
 r_i \leq s_i:= \sum_{\{a\in Q_1 :\;
  \head(a)=i\}} r_{\tail(a)}\quad\quad \text{for all }1\leq i\leq \rho.
 \end{equation}
 The subset of $\vartheta$-stable points is open in the affine space
 $\Rep(Q,\underline{r})$, so if the numerical condition
 \eqref{eqn:numerical} is satisfied then the locus of
 $\vartheta$-stable points in $\Rep(Q,\underline{r})$ is dense. To
 describe explicitly the ideal that cuts out the $\vartheta$-unstable
 locus we work with decomposition \eqref{eqn:coarsegrading}. For
 $1\leq i\leq \rho$, identify $\Rep(Q,\underline{r})_i$ with the space
 of matrices $\Mat(r_i\times s_i, \kk)$.  For any subset $I\subset
 \{1,\dots, s_i\}$ with $\vert I\vert = r_i$, let $m_I(i)\in
 \kk[\Mat(r_i\times s_i, \kk)]$ denote the polynomial of degree $r_i$
 that computes the minor of an $r_i\times s_i$-matrix whose columns
 are indexed by the entries of $I$. For any $1\leq i\leq \rho$, the
 polynomial ring $\kk[\Rep(Q,\underline{r})]$ is obtained from
 $\kk[\Rep(Q,\underline{r})_i]$ by adding extra variables, so each
 $m_I(i)$ is naturally a polynomial of degree $r_i$ in 
 $\kk[\Rep(Q,\underline{r})]$.
 
 \begin{proposition}
 \label{prop:finemoduli}
 Let $Q$ be a finite acyclic quiver with a unique source $0\in Q_0$, and suppose
that $\underline{r}=(1,r_1,\dots, r_\rho)$ satisfies
 \eqref{eqn:numerical}. For the special weight vector $\vartheta$, the following spaces coincide:
 \begin{enumerate}
 \item[\one] the GIT quotient $\Rep(Q,\underline{r})\git_\vartheta G$
   linearised by $\vartheta\in G^\vee$;
 \item[\two] the geometric quotient of $\Rep(Q,\underline{r})\setminus
   \mathbb{V}(B)$ by the action of the reductive group $G$, where \[ B
   = \bigcap_{1\leq i\leq \rho} \Big(m_I(i) \in
   \kk[\Rep(Q,\underline{r})] : I\subset \{1,\dots, s_i\} \text{ with
   }\vert I\vert = r_i\Big)
 \]
 is the irrelevant ideal;
 \item[\three] the fine moduli space
  $\mathcal{M}_\vartheta(Q,\underline{r})$ of $\vartheta$-stable
  representations of $Q$ of dimension vector $\underline{r}$.
 \end{enumerate}
 Moreover, this space is a smooth projective variety of dimension
 $\sum_{1\leq i\leq \rho} r_i(s_i-r_i)$.
 \end{proposition}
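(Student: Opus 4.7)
The plan is to combine King's GIT description of moduli of quiver representations with the explicit characterisation of $\vartheta$-(semi)stable points supplied by Lemma~\ref{lem:generic}, and then to harvest the geometric properties from the moduli construction.

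First I would establish the equivalence \one$\Leftrightarrow$\three. By King~\cite{King}, the GIT quotient $\Rep(Q,\underline{r})\git_\vartheta G$ corepresents the moduli functor of $\vartheta$-semistable representations up to S-equivalence, and becomes a fine moduli space parameterising $\vartheta$-stable representations whenever every $\vartheta$-semistable representation is $\vartheta$-stable and the dimension vector $\underline{r}$ is indivisible. The first hypothesis is exactly Lemma~\ref{lem:generic}\one$\Leftrightarrow$\two, while the second is automatic because $r_0=1$. This immediately identifies \one with the fine moduli space \three.

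Next, for \one$\Leftrightarrow$\two, I would identify the $\vartheta$-unstable locus with $\mathbb{V}(B)$. In the coarse decomposition \eqref{eqn:coarsegrading}, $w_i\in \Mat(r_i\times s_i,\kk)$ fails to have full rank $r_i$ exactly when every maximal minor $m_I(i)$ vanishes, i.e.\ when $w=(w_i)$ lies in $\mathbb{V}\bigl(m_I(i):|I|=r_i\bigr)$. Taking the union over $i$ gives precisely $\mathbb{V}(B)$, and by Lemma~\ref{lem:generic}\two$\Leftrightarrow$\three this is the set of $\vartheta$-unstable points. Since $\vartheta$-semistable equals $\vartheta$-stable, the quotient on the complement is a geometric quotient, proving \two coincides with \one.

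For the smoothness and dimension of the moduli space, I would observe that the stabilizer in $\prod_i\GL(r_i)$ of a $\vartheta$-stable representation equals the diagonal torus $\kk^\times$ (this is standard for stable representations with trivial endomorphism algebra), so $G$ acts freely on the open set $\Rep(Q,\underline{r})\setminus\mathbb{V}(B)$. Hence the quotient is smooth and of the expected dimension
\[
\dim \Rep(Q,\underline{r})-\dim G \;=\;\sum_{i\geq 1} r_i s_i \;-\;\Bigl(\sum_{i\geq 0} r_i^2 - 1\Bigr)\;=\;\sum_{1\leq i\leq \rho} r_i(s_i-r_i),
\]
using $r_0=1$ and the fact that $s_0=0$ because $0$ is the unique source of $Q$. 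Finally, projectivity follows because the GIT quotient is projective over the affine quotient $\Spec \kk[\Rep(Q,\underline{r})]^G$, and since $Q$ is acyclic the Le Bruyn--Procesi description of quiver invariants shows that $\kk[\Rep(Q,\underline{r})]^G=\kk$, so the affine base is a point.

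The main obstacle is not any single deep computation but rather the careful matching of the scheme-theoretic unstable locus in \two with the character-theoretic notion of instability underlying \one; once Lemma~\ref{lem:generic} is in hand this reduces to the standard linear-algebra identification of the maximal-minor locus with the rank-drop locus, and everything else follows from King's theorem and standard GIT.
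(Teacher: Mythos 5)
Your argument for the equivalences \one$\Leftrightarrow$\two\ and \one$\Leftrightarrow$\three\ matches the paper's: identify the unstable locus with the rank-drop locus $\mathbb{V}(B)$ via Lemma~\ref{lem:generic}, and invoke King's result together with indivisibility of $\underline{r}$ (forced by $r_0=1$) and the coincidence of semistability and stability. Your treatment of projectivity is also the same, with the harmless addition of the Le Bruyn--Procesi description to justify $\kk[\Rep(Q,\underline{r})]^G=\kk$; the paper simply states the equality, which is immediate since an acyclic quiver has no oriented cycles and hence no nonconstant invariants.

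Where you genuinely diverge is in the proof of smoothness and the dimension formula. The paper gives a hands-on argument: put each full-rank matrix $w_i$ into reduced column echelon form using the $G$-action, observe that the $r_i(s_i-r_i)$ remaining free entries furnish local coordinates, and thereby exhibit an explicit atlas of affine charts (exactly as for Grassmannians, iterated across the vertices). You instead argue that $\vartheta$-stability forces $\End(W)=\kk$, so the $\prod_i\GL(r_i)$-stabilizer is the central $\kk^\times$, hence $G$ acts freely on the stable locus; smoothness and the dimension $\dim\Rep(Q,\underline{r})-\dim G=\sum_i r_i(s_i-r_i)$ then follow from the general theory of free quotients. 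Both routes are valid. Your approach is more conceptual and generalizes readily (it is the argument one would use for arbitrary stable quiver moduli), while the paper's echelon-form charts are more elementary, require no appeal to Schur's lemma or to the principal-bundle structure of the quotient, and have the bonus of producing a concrete cover that is useful later (e.g.\ for Theorem~\ref{thm:tower}). If you wanted to push your version through in all characteristics without invoking Luna-type slice results, you would need to note that a free action of a reductive group on a smooth variety with geometric quotient yields a faithfully flat quotient map with smooth fibres, from which smoothness of the base follows; the paper's chart argument sidesteps this entirely.
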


 \begin{proof}
   The variety $\Rep(Q,\underline{r})\git_\vartheta G$ is defined to
   be the categorical quotient of the open subscheme of
   $\vartheta$-semistable points in $\Rep(Q,\underline{r})$ by the
   action of $G$.  Assumption \eqref{eqn:numerical} ensures that this
   variety is nonempty, and Lemma~\ref{lem:generic} shows that
   $\Rep(Q,\underline{r})\git_\vartheta G$ coincides with the
   geometric quotient of the open subscheme of $\vartheta$-stable
   points by the action of $G$.  Lemma~\ref{lem:generic} also shows
   that the $\vartheta$-unstable locus comprises points $(w_i)\in
   \Rep(Q,\underline{r})$ satisfying $\rank(w_i)<r_i$ for some $1\leq
   i\leq \rho$. The vanishing of all minors $m_I(i)$ detects this drop
   in rank, so a point lies in $\mathbb{V}(B)$ if and only if it is
   $\vartheta$-unstable. This shows that \one\ and \two\ coincide. The
   dimension vector $\underline{r}$ is indivisible since $r_0=1$, and
   every $\vartheta$-semistable representation of $Q$ is
   $\vartheta$-stable by Lemma~\ref{lem:generic}, so varieties \one\ 
   and \three\ coincide by King~\cite[Proposition~5.3]{King}.  As for
   smoothness, move a $\vartheta$-stable point $(w_i)\in
   \Rep(Q,\underline{r})$ in its $G$-orbit so that each full rank
   matrix $w_i\in \Mat(r_i\times s_i, \kk)$ is in reduced echelon
   form. The resulting matrix has $r_i(s_i-r_i)$ free entries,
   producing a cover of $\mathcal{M}_\vartheta(Q,\underline{r})$ by
   affine spaces of dimension $\sum_{1\leq i\leq \rho} r_i(s_i-r_i)$.
   Projectivity follows since $\Rep(Q,\underline{r})\git_\vartheta G$
   is projective over $\Spec\kk[\Rep(Q,\underline{r})]^G = \Spec \kk$.
 \end{proof}
 
 \begin{remark}
  \label{rem:strict}
 The proof shows that there is some redundancy in the construction when $r_i = s_i$ for some $i\geq 1$ (compare also Theorem~\ref{thm:tower} to follow). It is often convenient to assume, therefore, that if the numerical condition \eqref{eqn:numerical} holds for all $i\geq 1$, then it holds strictly for all $i\geq 1$. This assumption is familiar for Grassmannians and partial flag varieties, see Examples~\ref{exa:Grassmannian} and \ref{exa:flag}.
 \end{remark}
   
 We call
 $\mathcal{M}_\vartheta:=\mathcal{M}_\vartheta(Q,\underline{r})$ the
 \emph{quiver flag variety} of the quiver $Q$ and dimension vector
 $\underline{r}$. As Proposition~\ref{prop:finemoduli}\three\ states,
 this variety is the fine moduli space of $\vartheta$-stable
 representations of $Q$ of dimension vector $\underline{r}$.
 Explicitly, $\mathcal{M}_\vartheta$ represents the functor
 $\rep(Q,\underline{r})$ that assigns to each connected scheme $B$ the
 set of locally free sheaves on $B$ of the form
 \begin{equation}
 \label{eqn:functor}
 \rep(Q,\underline{r})(B):= \left\{\bigoplus_{i \in Q_0} \mathscr{W}_i
   : \begin{array}{c} \rank(\mathscr{W}_i)=r_i \;\;\forall\; i\in Q_0,
     \text{ with }\mathscr{W}_0\cong \mathscr{O}_B\\ \exists \text{
       morphism }\mathscr{W}_{\tail(a)}\to
     \mathscr{W}_{\head(a)}\;\;\forall\; a\in Q_1 \\ \text{each fibre
     }\bigoplus_{i\in Q_0}W_i \text{ is
     }\vartheta\text{-stable}\end{array}\right\}/\sim_{\text{isom}}.
 \end{equation}
 In particular, the quiver flag variety
 $\mathcal{M}_\vartheta$ carries
 \begin{enumerate}
 \item[\one] a universal locally free sheaf $\bigoplus_{i \in Q_0}
   \mathscr{W}_i$ with $\rank(\mathscr{W}_i)=r_i$ and
   $\mathscr{W}_0\cong
   \mathscr{O}_{\mathcal{M}_\vartheta}$; and
 \item[\two] a universal $\kk$-algebra homomorphism 
  \[
  \kk Q\to
 \End\Big(\bigoplus_{i \in Q_0} \mathscr{W}_i\Big)
  \]
  obtained by
 composing morphisms $\mathscr{W}_{\tail(a)}\to \mathscr{W}_{\head(a)}$
 arising from arrows $a\in Q_1$ to associate morphisms to paths.
 \end{enumerate}
 As King~\cite{King} notes, the construction of moduli spaces of
 quiver representations has an inherant ambiguity in the choice of
 isomorphism $G\cong \prod_{1\leq i\leq \rho} \GL(r_i)$; our choice
 gives $\mathscr{W}_0\cong \mathscr{O}_{\mathcal{M}_\vartheta}$.

 \begin{example}
 \label{exa:Grassmannian}
 For $\rho=1$, write $n$ for the number of arrows from 0 to 1
 and set $r:=r_1$. Then $Q$ is the Kronecker quiver and $\Rep(Q,\underline{r})=\Mat(r\times
 n,\kk)$. The projection $\GL(1)\times \GL(r)\to \GL(r)$
 identifies $G$ with $\GL(r)$, after which the $G$-action on
 $\Rep(Q,\underline{r})$ coincides with the left $\GL(r)$-action on
 $\Mat(r\times n,\kk)$. Lemma~\ref{lem:generic} shows that
 $\vartheta$-stable points in $\Rep(Q,\underline{r})$ are matrices of rank $r$. If $r>n$ then the $\vartheta$-stable locus is
 empty, hence so is $\mathcal{M}_\vartheta$.
 Otherwise, Proposition~\ref{prop:finemoduli} shows that
 $\mathcal{M}_\vartheta$ parametrises $\GL(r)$-orbits
 of full rank matrices in $\Mat(r\times n,\kk)$, that is, surjective
 maps $\kk^n\twoheadrightarrow E_1$ for which $\dim(E_1)=r$. Thus,
 $\mathcal{M}_\vartheta$ is isomorphic to the
 Grassmannian $\Gr(\kk^n,r)$ of $r$-dimensional quotients of $\kk^n$, and the tautological bundle
 $\mathscr{W}_1$ is isomorphic to the rank $r$ quotient bundle $\mathscr{E}_1$ on
 $\Gr(\kk^n,r)$. 
 \end{example}

 \begin{example}
  More generally, let $Q$ denote the quiver with $Q_0=\{0,1,\dots, \rho\}$, where for
   each $1\leq i\leq \rho$ there are $n_i:= n_{0,i}\geq 2$ arrows from 0 to
   $i$, and where $n_{i,j}=0$ otherwise. If $\underline{r}$ satisfies $r_i<n_i$ for $i>0$, then $\mathcal{M}_\vartheta$ is isomorphic to
   the product of Grassmannians $\prod_{1\leq i\leq \rho} \Gr(\kk^{n_{i}},
   r_i)$.  
 \end{example}

 \begin{example}
 \label{exa:flag}
 Let $Q$ be the quiver with $n:=n_{0,1}$, $n_{i,i+1}=1$ for $1\leq
 i\leq \rho-1$, and $n_{i,j}=0$ otherwise; see Figure~\ref{fig:flag}(a).
Condition \eqref{eqn:numerical} is strict when
 $n>r_1>r_2>\dots > r_\rho$. Lemma~\ref{lem:generic} identifies
 $\vartheta$-stable points of $\Rep(Q,\underline{r})$ with
 chains $[\kk^n\twoheadrightarrow \kk^{r_1}\twoheadrightarrow \dots \twoheadrightarrow \kk^{r_\rho}]$ of surjective
 $\kk$-linear maps, and the action of $G$ is change of basis on the vector spaces $\kk^{r_i}$ in the chain. Thus, 
 isomorphism classes of $\vartheta$-stable points of
 $\Rep(Q,\underline{r})$ are quotient flags $[\kk^n\twoheadrightarrow
 E_1\twoheadrightarrow \dots \twoheadrightarrow E_\rho]$ where
 $\dim_\kk(E_i)=r_i$ for $1\leq i\leq \rho$. In particular,
 $\mathcal{M}_\vartheta$ is isomorphic to the partial flag variety
  \[
 \Fl(n;r_1,\dots,r_\rho):= \Big\{\text{Quotient flags }\kk^n\twoheadrightarrow
 E_1\twoheadrightarrow \dots \twoheadrightarrow E_\rho \; : \dim_\kk(E_i)=r_i \text{ for }1\leq
 i\leq \rho\Big\},
 \]
 and for each $0\leq i\leq \rho$, the tautological bundle
 $\mathscr{W}_i$ is isomorphic to the quotient bundle $\mathscr{E}_i$ on $\Fl(n;r_1,\dots,r_\rho)$ whose fibre at a point is the $\kk$-vector space $E_i$ in
 the corresponding flag.
  \end{example}

  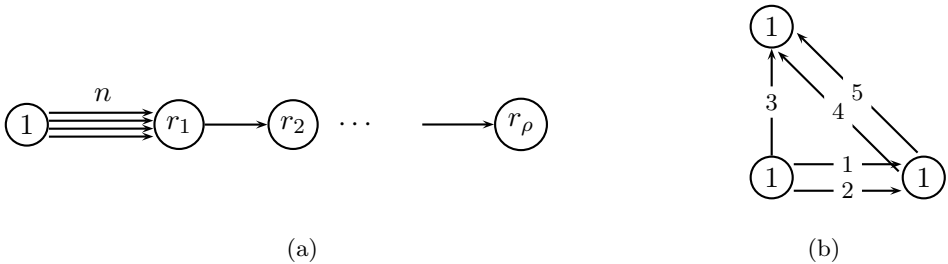
\begin{figure}[!ht]
    \centering
  \subfigure[]{
        \psset{unit=1cm}
        \begin{pspicture}(1,-0.2)(6.5,2)
          \cnodeput(0,1){A}{1}
          \cnodeput(2,1){B}{$r_1$} 
          \cnodeput(3.5,1){C}{$r_2$}
         \pnode(5.2,1){E}{}
          \cnodeput(6.5,1){F}{$r_\rho$}
          \psset{nodesep=0pt}
          \ncline[offset=4.5pt]{->}{A}{B}
          \ncline[offset=-4.5pt]{->}{A}{B}
           \ncline[offset=1.5pt]{->}{A}{B}
          \ncline[offset=-1.5pt]{->}{A}{B}
          \rput(1,1.4){$n$}
          \ncline{->}{B}{C}
          \ncline{->}{E}{F}
          \rput(4.35,1){$\cdots$}
      \end{pspicture}}\hspace{1in}
      \subfigure[]{
        \psset{unit=1cm}
        \begin{pspicture}(0,-0.5)(2.5,1.9)
          \cnodeput(0.5,0){A}{1}
          \cnodeput(2.5,0){B}{1} 
          \cnodeput(0.5,2){C}{1}
          \psset{nodesep=0pt}
          \ncline[offset=5pt]{->}{A}{B}\lput*{:U}{\footnotesize{1}}
          \ncline[offset=-5pt]{->}{A}{B}\lput*{:U}{\footnotesize{2}}
          \ncline{->}{A}{C} \lput*{:270}{\footnotesize{3}}
          \ncline[offset=5pt]{<-}{C}{B}\lput*{:45}{\footnotesize{5}}
          \ncline[offset=5pt]{->}{B}{C}\lput*{:225}{\footnotesize{4}}
        \end{pspicture}
       }
    \caption{Quivers defining (a) a flag variety; and (b) a
      $\mathbb{P}^2$-bundle over $\mathbb{P}^1$. In each case, the
      numbers at vertices are the components of the dimension vector \label{fig:flag}}
  \end{figure}
 
 If the dimension vector is $\underline{r} = (1,\dots,1)\in \ZZ^{\rho+1}$, then $G$ is an algebraic torus and $\mathcal{M}_\vartheta$ is the toric variety with Cox ring $\kk[y_a : a\in Q_1]$ studied by Craw--Smith~\cite[Proposition~3.8]{CrawSmith}; this is a toric quiver variety as defined by Hille~\cite{Hille}. To emphasis our choice of the special weight vector $\vartheta$, we refer to $\mathcal{M}_\vartheta$ as a \emph{toric quiver flag variety}.  In this case, the quotient construction from Proposition~\ref{prop:finemoduli}\two\ coincides
 with the Cox construction of the toric variety.

 \begin{example}
 \label{exa:toric}
 For the quiver $Q$ from Figure~\ref{fig:flag}(b) and for $\vartheta =
 (-2,1,1)$, the $\vartheta$-unstable locus in
 $\Rep(Q,\underline{r})=\Spec\kk[y_1,\dots, y_5]$ is cut out by $B = (y_1,y_2) \cap (y_3, y_4, y_5)$, and the incidence matrix
 of $Q$ defines a $\ZZ^2$-grading of $\kk[y_1,\dots, y_5]$ that
 induces the $G$-action on $\Rep(Q,\underline{r})$. We will see
 in Example~\ref{exa:toric2} that $\mathcal{M}_\vartheta =
 \mathbb{P}_{\mathbb{P}^1}\big(\mathscr{O}\oplus \mathscr{O}(1)\oplus
   \mathscr{O}(1)\big)$.
 \end{example}

 \begin{remark}
 \label{rem:Ginzburg}
 Crawley--Boevey~\cite{CrawleyBoevey} observed that representations of
 a quiver $Q$ with a unique source $0$ of dimension vector
 $\underline{r}$ with $r_0=1$ can be interpreted as certain framed
 representations of a quiver $\overline{Q}$ obtained from $Q$ by
 removing the vertex $0$ and all arrows with tail at 0.
 Ginzburg~\cite[\S3]{Ginzburg} provides a nice explanation and presents analogues of our Lemma~\ref{lem:generic},
 Proposition~\ref{prop:finemoduli} and Example~\ref{exa:flag}. We
 provide a purely geometric interpretation of the framing in
 Section~\ref{sec:five}.
 \end{remark}

 \section{Birational geometry} 
 \label{sec:three}
This section  investigates how quiver flag varieties change under variation of GIT quotient and presents a structure theorem that describes each via a tower of Grassmann-bundles. Throughout this section, $Q$ is a finite, acyclic quiver with a
 unique source $0\in Q_0$, the dimension vector
 $\underline{r}=(1,r_1,\dots,r_\rho)$ satisfies \eqref{eqn:numerical} and
 $\mathcal{M}_\vartheta=\Rep(Q,\underline{r})\git_\vartheta G$ is the
 quiver flag variety.
  
  It is well known that any flip, flop, divisorial
 contraction or Mori fibre space structure of a smooth projective
 toric variety $X$ is obtained by variation of the GIT parameter that is implicit in the
 Cox quotient construction. This is the case for toric quiver flag varieties, so every such rational
 map is of the form
 $\mathcal{M}_\vartheta=\Rep(Q,\underline{r})\git_\vartheta
 G\dashrightarrow \Rep(Q,\underline{r})\git_\theta G$ for some
 $\theta\in G^\vee$. Quiver flag varieties are not toric in general,
 but we can nevertheless say the following.

 \begin{proposition}
 \label{prop:MDS}
   Every quiver flag variety $\mathcal{M}_\vartheta$ is a Mori dream
   space.
 \end{proposition}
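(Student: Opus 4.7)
The plan is to invoke the characterization due to Hu--Keel~\cite{HuKeel}: a normal projective $\mathbb{Q}$-factorial variety whose Picard group is finitely generated and free is a Mori Dream Space precisely when its Cox ring is a finitely generated $\kk$-algebra. Proposition~\ref{prop:finemoduli} already supplies smoothness and projectivity, hence normality and $\mathbb{Q}$-factoriality, and the tower of Grassmann-bundles from Theorem~\ref{thm:towerintro} shows that $\Pic(\mathcal{M}_\vartheta)$ is free of finite rank, generated by the determinants $\det(\mathscr{W}_i)$ of the tautological bundles. Only the Cox ring condition requires genuine work.

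The key step is to factor the reductive GIT quotient through the commutator subgroup. One checks that $[G,G]$ is the semisimple group $\prod_{1\leq i\leq \rho}\operatorname{SL}(r_i)$, and that the abelianisation $T := G/[G,G]$ is an algebraic torus of rank $\rho$. Hilbert's theorem on invariants of reductive groups then guarantees that
\[
A := \kk[\Rep(Q,\underline{r})]^{[G,G]}
\]
is a finitely generated $\kk$-algebra, and the induced $T$-action exhibits $\mathcal{M}_\vartheta = (\Spec A)\git_\vartheta T$ as the GIT quotient of an affine variety by an algebraic torus.

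The standard dictionary between $T$-linearisations and line bundles on GIT quotients gives an isomorphism $A \cong \bigoplus_{\chi\in \hat{T}} H^0(\mathcal{M}_\vartheta, L_\chi)$, where $L_\chi$ is the line bundle on the quotient descending from the character $\chi$. Provided that the assignment $\chi\mapsto L_\chi$ identifies $\hat{T}$ with $\Pic(\mathcal{M}_\vartheta)$, the graded ring $A$ is exactly the Cox ring of $\mathcal{M}_\vartheta$, and finite generation of $A$ combined with Hu--Keel's criterion closes the argument.

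The main subtlety is establishing this identification $\hat{T}\cong \Pic(\mathcal{M}_\vartheta)$: every line bundle must arise from some character, and no nonzero character should induce the trivial bundle. Surjectivity is handled by the first paragraph, since the generators $\det(\mathscr{W}_i)$ of $\Pic(\mathcal{M}_\vartheta)$ are precisely the line bundles coming from the standard characters of $T$. Injectivity relies on the normalisation $r_0=1$: quotienting out the diagonal $\kk^\times$ rigidifies the moduli problem, making the $G$-action on the $\vartheta$-stable locus have trivial generic stabiliser, so that no nonzero character descends to the trivial bundle.
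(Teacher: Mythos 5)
Your proposal identifies the right framework — Hu--Keel and the description of the Cox ring via semi-invariants — and the algebraic bookkeeping (identifying $[G,G]$ with $\prod_i\operatorname{SL}(r_i)$ and $T=G/[G,G]$ with a rank-$\rho$ torus, using $r_0=1$) is correct. But there is a genuine gap at the step where you write that ``the standard dictionary'' gives an isomorphism $A\cong\bigoplus_{\chi\in\hat T}H^0(\mathcal{M}_\vartheta,L_\chi)$. That statement is \emph{not} automatic: the graded piece $A_\chi$ consists of semi-invariants on all of $\Rep(Q,\underline{r})$, while $H^0(\mathcal{M}_\vartheta,L_\chi)$ is computed from semi-invariants on the $\vartheta$-stable locus only. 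The restriction map $A_\chi\to H^0(\mathcal{M}_\vartheta,L_\chi)$ is injective by density, but its surjectivity requires that semi-invariant functions on the stable locus extend, and this is precisely where one must invoke that the $\vartheta$-unstable locus has codimension at least two (so Hartogs-type extension applies). Without this, $A$ could be a proper subring of the Cox ring, and its finite generation proves nothing.

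This codimension-two condition is exactly the hypothesis of Hu--Keel's Theorem~2.3 that is \emph{not} automatic for quiver flag varieties, and it is where the paper's actual proof does all of its work: the unstable locus $\mathbb{V}(B)$ is cut out by determinantal equations, and a component of codimension one occurs only when some vertex $i$ has $r_i=s_i=1$, forcing $w_i$ to be a $1\times 1$ matrix; in that case one contracts the unique arrow into $i$ and identifies the two vertices, giving an equivalent presentation of $\mathcal{M}_\vartheta$ without the offending vertex. Iterating this either eliminates all codimension-one pieces or contracts $Q$ to a point, giving $\mathcal{M}_\vartheta=\Spec\kk$. Your proof needs this argument (or some substitute for it) inserted before the Cox-ring identification can be asserted; the rest is then essentially a restatement of Hu--Keel's theorem rather than an independent route around it.
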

 \begin{proof}
   We show that the assumptions of Hu--Keel~\cite[Theorem~2.3]{HuKeel}
   are satisfied. The $\kk$-vector space $\Rep(Q,\underline{r})$ has
   trivial class group, and every $\vartheta$-semistable point is
   $\vartheta$-stable by Lemma~\ref{lem:generic}. Since
   $\mathcal{M}_\vartheta$ is projective, it remains to show that the
   $\vartheta$-unstable locus has codimension at least two.
   Proposition~\ref{prop:finemoduli}\two\ shows that each component of
   the $\vartheta$-unstable locus $\mathbb{V}(B)$ is cut out by a set
   of determinantal equations. A component of codimension one arises
   only if there exists a vertex $i$ with $r_i=s_i=1$, forcing the
   matrix $w_i$ in decomposition \eqref{eqn:coarsegrading} to be a
   $1\times 1$ matrix. In this case, $\mathcal{M}_\vartheta$ can be
   constructed equivalently by contracting the unique arrow $a$ with
   head at $i$ and identifying vertices $i$ and $\tail(a)$. Repeat
   until either no such vertices exist, or the quiver contracts to a
   single vertex, giving $\mathcal{M}_\vartheta=\Spec\kk$. This
   completes the proof.
 \end{proof}
 
 In the space of fractional characters 
 \[
 G^\vee_\QQ :=
 \Big\{(\theta_0,\dots,\theta_\rho)\in \QQ^{\rho+1} :
 \sum\theta_ir_i=0\Big\},
 \]
 the set of parameters $\theta\in G^\vee_\QQ$ for which
 $\Rep(Q,\underline{r})\git_\theta G$ is nonempty forms a closed
 polyhedral cone $\Sigma(Q,\underline{r})$. Two parameters are
 GIT-equivalent if their semistable loci coincide in
 $\Rep(Q,\underline{r})$, and the resulting equivalence classes
 provide a finite polyhedral decomposition of this cone into open GIT
 chambers separated by codimension-one walls. In our case,
 Hu--Keel~\cite[Lemma~2.2(4)]{HuKeel} implies that the map
 \begin{equation}
 \label{eqn:HuKeel}
 L\colon G^\vee_\QQ\to \Pic(\mathcal{M}_\vartheta)\otimes_\ZZ \QQ
 \end{equation}
 defined by sending $\theta = (\theta_0,\theta_1,\dots, \theta_\rho)$
 to $L(\theta)= \det(\mathscr{W}_1)^{\theta_1}\otimes \dots \otimes
 \det(\mathscr{W}_\rho)^{\theta_\rho}$ is an isomorphism that
 identifies the GIT chamber containing the special weight vector $\vartheta$ with the ample cone
 of $\mathcal{M}_\vartheta$. In particular,  $L(\vartheta)=\det(\mathscr{W}_1)\otimes \dots \otimes
 \det(\mathscr{W}_\rho)$ is the polarising ample line bundle inherited from the GIT description of $\mathcal{M}_\vartheta = \Rep(Q,\underline{r})\git_\vartheta G$.
 
 More generally, $L$ identifies the GIT
 chamber decomposition with a decomposition of the cone of pseudoeffective divisors $\overline{NE^1}(\mathcal{M}_\vartheta)$
 into chambers corresponding one-to-one with small $\QQ$-factorial
 modifications of $\mathcal{M}_\vartheta$.  Thus, all flips,
 flops, divisorial contractions or Mori fibre space structures of
 $\mathcal{M}_\vartheta$ are obtained by variation of GIT quotient, and all sequences of flips terminate.

 \begin{remark}
   Derksen--Weyman~\cite{DerksenWeyman2} describe the
   faces of the polyhedral cone $\Sigma(Q,\underline{r})$ for any
   acyclic quiver $Q$  and
   dimension vector $\underline{r}$ (they need not assume that $Q$ has a unique source). 
   \end{remark}
 
 We now describe the structure theorem for quiver flag varieties in terms of a tower of Mori fibre spaces. For $0\leq i\leq \rho$, let $Q(i)$ denote the subquiver of $Q$ obtained by removing all vertices $k>i$ and all arrows with head at
 any of these vertices. In other words, the vertex set is
 $Q(i)_0=\{0,\dots,i\}$ and the arrow set is $Q(i)_1 = \{a\in Q_1 :
 1\leq \head(a)\leq i\}$. For the quiver $Q(i)$ and for the dimension
 vector $\underline{r}(i) = (1,r_1,\dots, r_i)$ we let
 \[
 Y_i:=
 \mathcal{M}_{\vartheta(i)}\big(Q(i), \underline{r}(i)\big)
 \]
 denote the quiver flag variety, where $\vartheta(i) = (-\sum_{j=1}^i
 r_j,1, \dots, 1)\in \ZZ^{i+1}$ is the GIT parameter.  Write
 $\mathscr{W}^{(i)}_0,\mathscr{W}^{(i)}_1,\dots, \mathscr{W}^{(i)}_i$ for the
 tautological bundles on $Y_i$.

 \begin{theorem}
 \label{thm:tower}
 For any quiver flag variety $\mathcal{M}_\vartheta$ there is a tower 
 of Grassmann-bundles
 \begin{equation}
 \label{eqn:tower}
 \mathcal{M}_\vartheta=Y_\rho\longrightarrow Y_{\rho-1}\longrightarrow
 \dots \longrightarrow Y_1\longrightarrow Y_0=\Spec \kk.
 \end{equation}
 where for $1\leq i\leq \rho$, the variety $Y_i$ is isomorphic to the Grassmannian $\Gr(\mathscr{F}_i,
 r_i)$, where $\mathscr{F}_i$ is the locally free sheaf of rank $s_i$ on $Y_{i-1}$ given by 
 \[
 \mathscr{F}_i:=\bigoplus_{\{a\in
   Q_1 :\: \head(a)=i\}} \mathscr{W}^{(i-1)}_{\tail(a)}.
 \]
 \end{theorem}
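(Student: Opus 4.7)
The plan is to establish the tower by induction on $\rho$, at each stage constructing a forgetful morphism $\pi_i \colon Y_i \to Y_{i-1}$ via the moduli interpretation of Proposition~\ref{prop:finemoduli}\three, then identifying $Y_i$ with the Grassmann bundle $\Gr(\mathscr{F}_i, r_i)$ by comparing functors of points relative to $Y_{i-1}$. The base case is immediate since $Q(0)$ is a single vertex, giving $Y_0 = \Spec \kk$, and $Y_1 = \Gr(\kk^{s_1}, r_1)$ by Example~\ref{exa:Grassmannian} applied to $Q(1)$.

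To construct $\pi_i$, start with a $B$-valued point of $Y_i$, namely a family $\bigoplus_{0 \leq j \leq i} \mathscr{W}_j$ of $\vartheta(i)$-stable representations of $Q(i)$ on $B$ as in \eqref{eqn:functor}. Discarding $\mathscr{W}_i$ together with every arrow morphism having head at $i$ yields a family of representations of $Q(i-1)$. The key point is that by Lemma~\ref{lem:generic}, $\vartheta(j)$-stability at a closed point is equivalent to the full-rank condition $\rank(w_k) = r_k$ for $1 \leq k \leq j$; since these conditions for $k \leq i-1$ involve only arrows with head in $\{1, \dots, i-1\}$, the restricted family is fibrewise $\vartheta(i-1)$-stable. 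This defines a natural transformation of moduli functors and hence a morphism $\pi_i \colon Y_i \to Y_{i-1}$ satisfying $\pi_i^* \mathscr{W}^{(i-1)}_j = \mathscr{W}^{(i)}_j$ for $0 \leq j \leq i-1$. In particular, $\pi_i^* \mathscr{F}_i = \bigoplus_{a : \head(a)=i} \mathscr{W}^{(i)}_{\tail(a)}$.

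Conversely, given a morphism $f \colon B \to Y_{i-1}$, lifting it to $Y_i$ amounts to choosing additional data on $B$: a rank $r_i$ locally free sheaf $\mathscr{W}_i$ together with morphisms $f^* \mathscr{W}^{(i-1)}_{\tail(a)} \to \mathscr{W}_i$ for each arrow $a$ with $\head(a)=i$, which assemble into a single morphism $f^*\mathscr{F}_i \to \mathscr{W}_i$. Invoking Lemma~\ref{lem:generic} again, the $\vartheta(i)$-stability condition on fibres is equivalent to this combined morphism being a fibrewise surjection, i.e.\ a surjection of locally free sheaves. This is precisely the data classified by the Grassmann bundle $\Gr(\mathscr{F}_i, r_i) \to Y_{i-1}$ of rank $r_i$ locally free quotients of $\mathscr{F}_i$. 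Comparing universal properties gives an isomorphism $Y_i \cong \Gr(\mathscr{F}_i, r_i)$ over $Y_{i-1}$, with $\mathscr{W}^{(i)}_i$ realised as the tautological quotient bundle.

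The main subtlety lies in matching the two moduli problems fibrewise, and this is handled uniformly by the rank criterion of Lemma~\ref{lem:generic}: it both ensures that $\vartheta(i)$-stability descends to $\vartheta(i-1)$-stability when $\mathscr{W}_i$ is forgotten, and that any fibrewise surjection $f^*\mathscr{F}_i \twoheadrightarrow \mathscr{W}_i$ over a $\vartheta(i-1)$-stable base lifts to a $\vartheta(i)$-stable family. Iterating the identification $Y_i \cong_{Y_{i-1}} \Gr(\mathscr{F}_i, r_i)$ for $i = 1, \dots, \rho$ yields the tower \eqref{eqn:tower}, and the rank formula $\rank(\mathscr{F}_i) = \sum_{a : \head(a)=i} r_{\tail(a)} = s_i$ matches \eqref{eqn:numerical}.
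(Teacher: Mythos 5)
Your proof is correct and follows essentially the same route as the paper: construct the forgetful morphism $Y_i \to Y_{i-1}$, use Lemma~\ref{lem:generic} to translate stability into the full-rank condition on $w_i$, and recognize $Y_i$ over $Y_{i-1}$ as the Grassmann bundle $\Gr(\mathscr{F}_i, r_i)$. The only difference is one of presentation: the paper builds the forgetful maps as $G$-equivariant projections of representation spaces and identifies the Grassmann bundle by examining fibres over closed points (the tautological bundles ``swept out'' as the point varies), whereas you compare the moduli functors directly via universal properties, which renders the scheme-theoretic identification $Y_i \cong \Gr(\mathscr{F}_i, r_i)$ somewhat more explicit.
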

 \begin{proof}
   Lemma~\ref{lem:generic} shows that for $1\leq i\leq \rho$, the
   projection map sending $(w_j)_{1\leq j\leq \rho}\in
   \Rep(Q,\underline{r})$ to $(w_j)_{1\leq j\leq i}\in
   \Rep\big(Q(i),\underline{r}(i)\big)$ restricts to a surjective map
   from the $\vartheta$-stable locus in $\Rep(Q,\underline{r})$ to the
   $\vartheta(i)$-stable locus in
   $\Rep\big(Q(i),\underline{r}(i)\big)$. This map is equivariant with
   respect to the change of basis group actions, giving surjective
   morphisms $\mathcal{M}_\vartheta\to Y_{i}$ for $i\geq 1$.
   Forgetting one vertex at a time shows that this map factors via
   $Y_j$ for $i<j<\rho$, and the structure morphism of $Y_1 = \Gr(\kk^{s_1},r_1)$ gives the final map in the
   tower \eqref{eqn:tower}. Now fix $i$ and consider the map $f\colon
   Y_i\to Y_{i-1}$ determined by the projection from $(w_j)_{1\leq
     j\leq i}$ that forgets the final matrix $w_i\in \Mat(r_i\times
   s_i,\kk)$.  Example~\ref{exa:Grassmannian} implies that the fibre
   of $f$ over any point $W = \big((W_j)_{0\leq j\leq i-1},(w_a)_{a\in
     Q(i-1)_1}\big)$ of $Y_{i-1}$ is the Grassmannian $\Gr(\kk^{s_i},r_i)$
   of $r_i$-dimensional quotients of the $s_i$-dimensional space
   $\bigoplus_{\{a\in Q_1 : \:\head(a)=i\}} W_{\tail(a)}$. As
   this point varies over $Y_{i-1}$, each $\kk$-vector space
   $W_{\tail(a)}$ sweeps out the corresponding tautological vector
   bundle $\mathscr{W}^{(i-1)}_{\tail(a)}$ on $Y_{i-1}$. This completes the
   proof.
 \end{proof}

\begin{remark}
The tower \eqref{eqn:tower} can be induced by variation of GIT, see the proof of Lemma~\ref{lem:nefcone}.
\end{remark}

 \begin{corollary}
 \label{coro:pullback}
 For $1\leq i\leq \rho$,  the tautological bundle $\mathscr{W}_i$ on $\mathcal{M}_\vartheta$ is
 obtained as the pullback of the tautological quotient bundle on
 $\Gr(\mathscr{F}_i, r_i)$. In addition:
 \begin{enumerate}
 \item[\one] the vector bundle $\mathscr{W}_i$ is globally generated for each $i\in Q_0$; and 
 \item[\two] the universal $\kk$-algebra homomorphism $\kk Q\to \End(\bigoplus_{i\in Q_0}\mathscr{W}_i)$ induces an isomorphism of $\kk$-vector spaces $e_0(\kk Q)e_i\cong H^0(\mathcal{M}_\vartheta, \mathscr{W}_i)$ for each $i\in Q_0$.
  \end{enumerate}
 \end{corollary}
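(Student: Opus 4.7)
The plan is to combine the tower description of Theorem~\ref{thm:tower} with two standard facts about a Grassmann bundle $\pi \colon \Gr(\mathscr{F}, r) \to Y$, namely $\Rderived\pi_* \mathscr{O} \cong \mathscr{O}_Y$ and $\pi_* \mathscr{Q} \cong \mathscr{F}$, where $\mathscr{Q}$ denotes the universal quotient bundle. The opening assertion is essentially built into the proof of Theorem~\ref{thm:tower}: the isomorphism $Y_i \cong \Gr(\mathscr{F}_i, r_i)$ identifies $\mathscr{W}_i^{(i)}$ with the universal quotient on this Grassmann bundle, and by the functoriality of \eqref{eqn:functor} the composition of tower projections $\mathcal{M}_\vartheta \to Y_i$ pulls $\mathscr{W}_i^{(i)}$ back to $\mathscr{W}_i$.

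For \one, I argue by induction on $i$ along the tower. The bundle $\mathscr{W}_0$ is trivial. For the inductive step on $Y_i$, if every $\mathscr{W}_j^{(i-1)}$ is globally generated on $Y_{i-1}$, then so is $\mathscr{F}_i = \bigoplus_{\head(a)=i} \mathscr{W}_{\tail(a)}^{(i-1)}$, hence so is $\pi_i^*\mathscr{F}_i$, and therefore its quotient $\mathscr{W}_i^{(i)}$; meanwhile $\mathscr{W}_j^{(i)} = \pi_i^* \mathscr{W}_j^{(i-1)}$ for $j<i$ remains globally generated as a pullback. Pulling back further along $\mathcal{M}_\vartheta \to Y_i$ preserves the property.

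For \two, I first use $\Rderived\pi_* \mathscr{O} \cong \mathscr{O}$ at each step of the tower together with the projection formula to identify $H^0(\mathcal{M}_\vartheta, \mathscr{W}_i) \cong H^0(Y_i, \mathscr{W}_i^{(i)})$. I then induct on $i$: applying $\pi_{i,*}$ to the universal quotient on $Y_i$ yields
$$
H^0(Y_i, \mathscr{W}_i^{(i)}) \cong H^0(Y_{i-1}, \mathscr{F}_i) \cong \bigoplus_{\head(a)=i} H^0\bigl(Y_{i-1}, \mathscr{W}_{\tail(a)}^{(i-1)}\bigr),
$$
which by the inductive hypothesis is $\bigoplus_{\head(a)=i} e_0 \kk Q e_{\tail(a)}$. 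Since every path $0 \to i$ in $Q$ factors uniquely as a final arrow $a$ with $\head(a)=i$ preceded by a path $0 \to \tail(a)$, this direct sum is canonically $e_0 \kk Q e_i$.

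The main obstacle is checking that the isomorphism so obtained agrees with the map coming from the universal algebra homomorphism, not merely as an abstract identification of vector spaces. The key point is that, under the Grassmann-bundle description of $Y_i$, the universal morphism $\mathscr{W}_{\tail(a)}^{(i)} \to \mathscr{W}_i^{(i)}$ attached to an arrow $a$ with $\head(a)=i$ factors as the pulled-back summand inclusion $\pi_i^*\mathscr{W}_{\tail(a)}^{(i-1)} \hookrightarrow \pi_i^*\mathscr{F}_i$ composed with the tautological surjection $\pi_i^*\mathscr{F}_i \twoheadrightarrow \mathscr{W}_i^{(i)}$; tracking this through the induction shows that composition with $a$ in the path algebra corresponds on the geometric side to applying this surjection to the section of $\mathscr{F}_i$ inherited from $\mathscr{W}_{\tail(a)}^{(i-1)}$.
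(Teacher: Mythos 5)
Your proof is correct, and for parts of the statement it is actually more explicit than the paper's own argument. The opening assertion and part \one\ proceed essentially as in the paper: the pullback identification is read off from the tower of Theorem~\ref{thm:tower}, and global generation is obtained inductively from the epimorphism $\pi_i^*\mathscr{F}_i\twoheadrightarrow\mathscr{W}_i^{(i)}$ together with the fact that pullbacks preserve global generation. Where you genuinely diverge is in part \two: you invoke the standard pushforward facts for a Grassmann bundle, $\Rderived\pi_*\mathscr{O}\cong\mathscr{O}$ and $\pi_*\mathscr{Q}\cong\mathscr{F}$, applied at each stage of the tower and combined with the projection formula, to obtain the chain $H^0(\mathcal{M}_\vartheta,\mathscr{W}_i)\cong H^0(Y_i,\mathscr{W}_i^{(i)})\cong H^0(Y_{i-1},\mathscr{F}_i)\cong\bigoplus_{\head(a)=i}H^0\big(Y_{i-1},\mathscr{W}_{\tail(a)}^{(i-1)}\big)$, which by induction and the path factorisation $e_0(\kk Q)e_i=\bigoplus_{\head(a)=i}a\cdot e_0(\kk Q)e_{\tail(a)}$ gives the isomorphism. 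The paper instead proves \one\ and \two\ simultaneously by assembling from basis paths a single sheaf epimorphism $\mathscr{O}^{\oplus d_i}\twoheadrightarrow\mathscr{W}_i$ with $d_i=\dim_\kk e_0(\kk Q)e_i$; this is a very economical way to get global generation, but as written it does not explain why the induced map $e_0(\kk Q)e_i\to H^0(\mathscr{W}_i)$ is bijective (a sheaf epimorphism from a trivial bundle need not induce a surjection, let alone an injection, on $H^0$). Your pushforward computation is precisely what justifies that step, and your final paragraph correctly flags and resolves the remaining compatibility issue, namely that the abstract identification of $H^0$ agrees with the map induced by the universal $\kk Q$-action, by tracing the universal morphism $\mathscr{W}_{\tail(a)}\to\mathscr{W}_i$ through the summand inclusion into $\pi_i^*\mathscr{F}_i$ followed by the tautological surjection.
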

 \begin{proof}
   As the $G$-orbit of the matrix $w_i\in \Mat(r_i\times s_i,\kk)$
   varies over points of $Y_i$, the surjective map
   $\bigoplus_{\{a\in Q_1 : \:\head(a)=i\}} W_{\tail(a)}\to W_i$
   sweeps out the tautological quotient $\mathscr{F}_i\to
   \mathscr{O}_{\Gr(\mathscr{F}_i,r_i)}(1)$. By pulling back to $\mathcal{M}_\vartheta$, the
   same surjective map of $\kk$-vector spaces defines a sheaf epimorphism
  \begin{equation}
  \label{eqn:epimorphism}
 \bigoplus_{\{a\in Q_1 :\: \head(a)=i\}}
   \mathscr{W}_{\tail(a)}\twoheadrightarrow \mathscr{W}_i.
 \end{equation}
 We now prove parts \one\ and \two\ simultaneously by induction. The result holds trivially for $i=0$ since $Q$ is acyclic and $\mathcal{M}_\vartheta$ is projective. Assume both statements for $j<i$. For $a\in Q_1$ satisfying $\head(a)=i$, there is a sheaf epimorphism $\mathscr{O}_{\mathcal{M}_\vartheta}^{\oplus d_{\tail(a)}}\twoheadrightarrow \mathscr{W}_{\tail(a)}$ for $d_{\tail(a)}=\dim_\kk e_0(\kk Q)e_{\tail(a)}$. By composing the sum of all such maps with \eqref{eqn:epimorphism}, we obtain a sheaf epimorphism 
 \[
 \mathscr{O}_{\mathcal{M}_\vartheta}^{\oplus d_i}=\bigoplus_{\{a\in Q_1 : \head(a)=i\}} \mathscr{O}_{\mathcal{M}_\vartheta}^{\oplus d_{\tail(a)}} \twoheadrightarrow \bigoplus_{\{a\in Q_1 : \head(a)=i\}} \mathscr{W}_{\tail(a)}\twoheadrightarrow \mathscr{W}_i,
 \]
 where $d_i:= \dim_\kk e_0(\kk Q)e_i$. This completes the proof.
    \end{proof}

 \begin{corollary}
 \label{coro:canonical}
 The anticanonical line bundle of $\mathcal{M}_\vartheta$ is 
 \[
  \omega_{\mathcal{M}_\vartheta}^{-1}  \cong   \bigotimes_{a\in Q_1} \Big(\det(\mathscr{W}_{\head(a)})^{r_{\tail(a)}}\otimes \det(\mathscr{W}_{\tail(a)})^{{-r_{\head(a)}}} \Big). 
  \]
 \end{corollary}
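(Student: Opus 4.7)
The plan is to read off the anticanonical bundle from the tower of Grassmann-bundles of Theorem~\ref{thm:tower}, because the relative tangent bundle of a Grassmann-bundle has a completely standard description in terms of its universal sub- and quotient bundles. Since every arrow of $Q$ has head at some vertex $1\leq i\leq \rho$ (recall that $0$ is the unique source), summing the relative anticanonicals over the tower will automatically produce one factor for every arrow, as required.

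First I would fix $1\leq i\leq \rho$ and analyse the map $\pi_i\colon Y_i\to Y_{i-1}$, which by Theorem~\ref{thm:tower} identifies $Y_i$ with $\Gr(\mathscr{F}_i,r_i)$. On this Grassmann-bundle there is the tautological short exact sequence
\[
0\longrightarrow \mathscr{K}_i\longrightarrow \pi_i^*\mathscr{F}_i\longrightarrow \mathscr{W}^{(i)}_i\longrightarrow 0,
\]
where $\mathscr{K}_i$ has rank $s_i-r_i$ and $\mathscr{W}^{(i)}_i$ is the tautological quotient of rank $r_i$. The relative tangent bundle is $T_{\pi_i}=\mathscr{K}_i^\vee\otimes \mathscr{W}^{(i)}_i$, so
\[
\omega_{Y_i/Y_{i-1}}^{-1} \;=\;\det\bigl(\mathscr{K}_i^\vee\otimes \mathscr{W}^{(i)}_i\bigr)\;=\;\det(\mathscr{K}_i)^{-r_i}\otimes \det(\mathscr{W}^{(i)}_i)^{s_i-r_i}.
\]
Using $\det(\mathscr{K}_i)\otimes \det(\mathscr{W}^{(i)}_i)=\pi_i^*\det(\mathscr{F}_i)$ to eliminate $\det(\mathscr{K}_i)$, this simplifies to
\[
\omega_{Y_i/Y_{i-1}}^{-1}\;=\;\pi_i^*\det(\mathscr{F}_i)^{-r_i}\otimes \det(\mathscr{W}^{(i)}_i)^{s_i}.
\]
Substituting the definition $\mathscr{F}_i=\bigoplus_{\{a\in Q_1:\head(a)=i\}}\mathscr{W}^{(i-1)}_{\tail(a)}$ and the equality $s_i=\sum_{\{a:\head(a)=i\}} r_{\tail(a)}$, I can rewrite this as a tensor product indexed by arrows with head at $i$:
\[
\omega_{Y_i/Y_{i-1}}^{-1}\;=\;\bigotimes_{\{a\in Q_1:\;\head(a)=i\}}\Bigl(\det(\mathscr{W}^{(i)}_i)^{r_{\tail(a)}}\otimes \det(\mathscr{W}^{(i-1)}_{\tail(a)})^{-r_i}\Bigr).
\]

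Next I would assemble these pieces. The compatibility of the tautological bundles under the tower (analogous to the pullback statement used throughout Section~\ref{sec:three}, and implicit in the identification $Y_i\cong \Gr(\mathscr{F}_i,r_i)$) guarantees that the tautological bundle $\mathscr{W}_j$ on $\mathcal{M}_\vartheta$ is the pullback, through all intermediate $Y_k$, of $\mathscr{W}^{(j)}_j$ on $Y_j$. Hence the standard identity $\omega_{Y_\rho}^{-1}=\bigotimes_{i=1}^\rho p_i^*\omega_{Y_i/Y_{i-1}}^{-1}$ (with $p_i\colon \mathcal{M}_\vartheta\to Y_i$ the composition of the tower maps) gives
\[
\omega_{\mathcal{M}_\vartheta}^{-1}\;=\;\bigotimes_{i=1}^{\rho}\;\bigotimes_{\{a\in Q_1:\;\head(a)=i\}}\Bigl(\det(\mathscr{W}_{\head(a)})^{r_{\tail(a)}}\otimes \det(\mathscr{W}_{\tail(a)})^{-r_{\head(a)}}\Bigr).
\]
Because each arrow $a\in Q_1$ has exactly one head, lying in $\{1,\dots,\rho\}$, the double tensor product collapses to a single tensor product over all arrows, yielding the claimed formula.

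The computation is essentially routine; the only point requiring care is the bookkeeping of pullbacks of tautological bundles along the tower, to be sure that $\mathscr{W}^{(i)}_i$ on $Y_i$ really does become $\mathscr{W}_i$ on $\mathcal{M}_\vartheta$ and that $\mathscr{W}^{(i-1)}_{\tail(a)}$ on $Y_{i-1}$ becomes $\mathscr{W}_{\tail(a)}$. This is precisely the content of Theorem~\ref{thm:tower} (each $Y_i$ is the quiver flag variety for the truncated quiver $Q(i)$, and the forgetful maps $\mathcal{M}_\vartheta\to Y_i$ are moduli-theoretic), so no genuine obstacle arises.
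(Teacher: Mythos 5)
Your proof is correct and takes essentially the same approach as the paper: both rely on the tower of Grassmann bundles from Theorem~\ref{thm:tower} and the formula for the (anti)canonical bundle of a Grassmann bundle, which you derive directly from the tautological short exact sequence and the identity $T_{\pi_i}\cong\mathscr{K}_i^\vee\otimes\mathscr{W}_i^{(i)}$, whereas the paper cites Demailly's formula. The paper proves the vertex-indexed form \eqref{eqn:canonical} by induction on $\rho$ and then observes the stated arrow-indexed form is equivalent; you telescope the relative anticanonical bundles directly in arrow form, which is a cosmetic reorganization of the same computation.
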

 \begin{proof}
 The given formula is equivalent to 
  \begin{equation}
 \label{eqn:canonical}
 \omega_{\mathcal{M}_\vartheta} = \bigotimes_{i\in Q_0} \det(\mathscr{W}_i)^{-s_i} \otimes \bigotimes_{a\in Q_1} \det(\mathscr{W}_{\tail(a)})^{{r_{\head(a)}}}.
 \end{equation}   
   We proceed by induction on $\rho = \vert Q_0\vert - 1$. For quivers
   with $\rho=1$, Example~\ref{exa:Grassmannian} shows that
   $\mathcal{M}_\vartheta\cong \Gr(\kk^{s_1},r_1)$ and that
   $\det(\mathscr{W}_1)$ is the ample bundle
   $\mathscr{O}_{\Gr(\kk^{s_1},r_1)}(1)$. The right-hand bundle from
   \eqref{eqn:canonical} is therefore $\det(\mathscr{W}_1)^{-s_1}\cong
   \mathscr{O}_{\Gr(\kk^{s_1},r_1)}(-s_1)$, the canonical bundle of
   the Grassmannian. We may assume by induction that the result holds
   for the quiver flag variety $Y_{\rho-1}$ whose defining quiver
   $Q(\rho-1)$ has one fewer vertices than $Q$. If we write $\pi\colon
   \mathcal{M}_\vartheta\to Y_{\rho-1}$ for the map from \eqref{eqn:tower} then as in Demailly~\cite[(2.10)]{Demailly} we have
   that
 \[
  \omega_{\mathcal{M}_\vartheta} \cong \pi^*\big(\omega_{Y_\rho-1}\otimes \det(\mathscr{F}_\rho)^{r_\rho}\big)\otimes \mathscr{O}_{\Gr(\mathscr{F}_\rho,r_\rho)}(-s_\rho).
 \]
 The inductive hypothesis and Corollary~\ref{coro:pullback} together give $\mathscr{O}_{\Gr(\mathscr{F}_\rho,r_\rho)}(-s_\rho)\cong
 \det(\mathscr{W}_\rho)^{-s_\rho}$ and
 \[
 \pi^*(\omega_{Y_{\rho-1}}) \cong \bigotimes_{i<\rho}
 \det(\mathscr{W}_i)^{-s_i}\otimes \bigotimes_{\{a\in Q_1 :\;
 \head(a)<\rho\}} \det(\mathscr{W}_{\tail(a)})^{r_{\head(a)}}.
 \]
 It remains to note that $\pi^*\big(\det(\mathscr{F}_\rho)^{r_\rho}\big)$ is isomorphic to $\bigotimes_{\{a \in Q_1 :\; \head(a) = \rho\}}
 \det(\mathscr{W}_{\tail(a)})^{r_\rho}$.
 \end{proof}
 
 The structure theorem, and in particular Corollary~\ref{coro:pullback}, shows that the tautological
 bundles generate a top-dimensional cone in the nef cone of $\mathcal{M}_\vartheta$.
 
 \begin{lemma}
   \label{lem:nefcone}
  The line bundles
   $\det(\mathscr{W}_1),\dots,\det(\mathscr{W}_\rho)$ are globally generated, and provide an integral basis
   for $\Pic(\mathcal{M}_\vartheta)$, so $\rho=\rank(\Pic(\mathcal{M}_\vartheta))$. Moreover, the ample cone of $\mathcal{M}_\vartheta$ contains 
  \begin{equation}
  \label{eqn:nefcone}
 \big\{\det(\mathscr{W}_1)^{\theta_1}\otimes \dots \otimes
 \det(\mathscr{W}_\rho)^{\theta_\rho} : \theta_1,\dots,\theta_\rho\in \QQ_{>0}\big\},
  \end{equation}
 and the bundles $\det(\mathscr{W}_1),\dots,\det(\mathscr{W}_{\rho-1})$ lie in a facet of the nef cone of $\mathcal{M}_\vartheta$.
   \end{lemma}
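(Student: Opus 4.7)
The plan is to establish the assertions in the order they are stated, drawing on Corollary~\ref{coro:pullback}, Theorem~\ref{thm:tower} and the Hu--Keel isomorphism $L$ from \eqref{eqn:HuKeel}. Global generation of each $\det(\mathscr{W}_i)$ will be immediate: Corollary~\ref{coro:pullback}\one\ provides a surjection $\mathscr{O}_{\mathcal{M}_\vartheta}^{\oplus d_i}\twoheadrightarrow \mathscr{W}_i$, and applying $\bigwedge^{r_i}$ yields a surjection onto $\det(\mathscr{W}_i)$. For the integral basis claim, since $r_0=1$ the relation $\sum \theta_i r_i=0$ determines $\theta_0$ in terms of $(\theta_1,\dots,\theta_\rho)$, identifying $G^\vee$ with $\ZZ^\rho$ in such a way that the $i$th standard basis vector is sent by $L$ to $\det(\mathscr{W}_i)$. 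I then need $L$ to be an isomorphism integrally, not merely after tensoring with $\QQ$. For this I will combine indivisibility of $\underline{r}$ (which gives a free $G$-action on the $\vartheta$-stable locus), Proposition~\ref{prop:MDS} (the unstable locus has codimension at least two), and the fact that every $G$-equivariant line bundle on the vector space $\Rep(Q,\underline{r})$ comes from a character, so that $\Pic(\mathcal{M}_\vartheta)\cong G^\vee\cong \ZZ^\rho$. This simultaneously delivers the basis assertion and the equality $\rho=\rank\Pic(\mathcal{M}_\vartheta)$.

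For the ample cone inclusion~\eqref{eqn:nefcone}, I will show that every rational character $\theta$ with $\theta_1,\dots,\theta_\rho>0$ lies in the same GIT chamber as $\vartheta$; the claim then follows because \eqref{eqn:HuKeel} identifies that chamber with the ample cone. Since $r_0=1$ forces $\theta_0<0$ for such $\theta$, its sign pattern matches that of $\vartheta$ exactly, and the inductive argument in the proof of Lemma~\ref{lem:generic} carries over verbatim to show that a representation $(w_i)$ is $\theta$-stable precisely when each $w_i$ has full rank. Hence the $\theta$- and $\vartheta$-stable loci agree.

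For the facet claim I will proceed by induction on $\rho$ using the tower~\eqref{eqn:tower}. Theorem~\ref{thm:tower} presents $\pi\colon \mathcal{M}_\vartheta=Y_\rho\to Y_{\rho-1}$ as a Grassmann-bundle, and Corollary~\ref{coro:pullback} identifies $\det(\mathscr{W}_i)\cong \pi^*\det(\mathscr{W}_i^{(\rho-1)})$ for $i<\rho$; by induction each $\det(\mathscr{W}_i^{(\rho-1)})$ is ample on $Y_{\rho-1}$, so its pullback to $Y_\rho$ is nef. The face of $\Nef(\mathcal{M}_\vartheta)$ corresponding to the extremal contraction $\pi$ consists of classes of zero intersection with curves contained in the fibres of $\pi$; those fibres are Grassmannians of Picard rank one with ample generator the restriction of $\det(\mathscr{W}_\rho)$, so this face coincides with $\pi^*\Pic(Y_{\rho-1})\otimes_\ZZ\QQ$. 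By the integral basis step, $\rank\Pic(Y_{\rho-1})=\rho-1$, so this face has codimension one in $\Pic(\mathcal{M}_\vartheta)\otimes_\ZZ\QQ$ and is therefore a facet containing each $\det(\mathscr{W}_i)$ for $i<\rho$. The main obstacle I anticipate is the integral refinement of the Hu--Keel map in the first step; the remaining assertions are reasonably direct once the structure theorem and Corollary~\ref{coro:pullback} are at hand.
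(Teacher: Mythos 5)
Your argument is essentially correct, but it diverges from the paper's proof in two places and contains one real (if inconsequential) misstatement.

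For global generation and for the ample cone inclusion~\eqref{eqn:nefcone}, your reasoning is identical to the paper's: exterior powers of the surjection from Corollary~\ref{coro:pullback} give global generation, and the sign pattern of any $\theta$ with $\theta_1,\dots,\theta_\rho>0$ matches that of $\vartheta$ (forcing $\theta_0<0$), so the argument of Lemma~\ref{lem:generic} applies verbatim and the isomorphism $L$ of~\eqref{eqn:HuKeel} finishes the job.

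For the integral basis claim, you take a genuinely different route. The paper argues geometrically via the tower: each $\det(\mathscr{W}_i)$ pulls back the ample generator of the relative Picard group $\Pic(Y_i/Y_{i-1})$ of the $i$th Grassmann bundle, so they freely generate $\Pic(\mathcal{M}_\vartheta)$ by Theorem~\ref{thm:tower}. You instead run a descent argument: the $G$-action on the $\vartheta$-stable locus is free (King), the unstable locus has codimension at least two (shown in the proof of Proposition~\ref{prop:MDS}, assuming strict inequality in~\eqref{eqn:numerical} as in Remark~\ref{rem:strict}), and $\Pic^G(\Rep(Q,\underline{r}))\cong G^\vee$ since $\Rep$ is an affine space, so $\Pic(\mathcal{M}_\vartheta)\cong G^\vee\cong\ZZ^\rho$. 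Both are valid; the paper's is shorter given that Theorem~\ref{thm:tower} is already proved, while yours is a self-contained GIT argument. One caveat in your wording: it is not quite indivisibility of $\underline{r}$ that gives freeness, but the coincidence of semistable and stable points (Lemma~\ref{lem:generic}) together with King's observation that stabilisers of stable points are trivial in $G=\prod\GL(r_i)/\kk^\times$.

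For the facet claim you again take a different route, and one that is shorter than the paper's. You exploit the tower fibration $\pi\colon\mathcal{M}_\vartheta\to Y_{\rho-1}$ directly: the pullbacks $\det(\mathscr{W}_i)=\pi^*\det(\mathscr{W}_i^{(\rho-1)})$ are nef, are trivial on fibre curves, and hence lie in the face $\pi^*\Nef(Y_{\rho-1})$, whose linear span $\pi^*\Pic(Y_{\rho-1})\otimes\QQ$ has codimension one since $\rank\Pic(Y_{\rho-1})=\rho-1$. The paper instead constructs the morphism to $Y_{\rho-1}$ by variation of GIT, writing $\bigotimes_{i<\rho}\det(\mathscr{W}_i)=L(\eta)$ for $\eta=(-\sum_{i<\rho}r_i,1,\dots,1,0)$ and proving $\Rep(Q,\underline{r})\git_\eta G\cong Y_{\rho-1}$ by passing through the affine quotient by $G_\rho\cong\GL(r_\rho)$; this extra work is deliberate, because the paper needs the fact that the tower~\eqref{eqn:tower} is induced by variation of GIT (see the remark following Theorem~\ref{thm:tower}). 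Your argument proves what the lemma asks for, but would not establish that remark. Two points of sloppiness in your version: you write that ``by induction each $\det(\mathscr{W}_i^{(\rho-1)})$ is ample on $Y_{\rho-1}$,'' but the inductive hypothesis only says these bundles are globally generated and hence nef (each individual $\det(\mathscr{W}_i^{(\rho-1)})$ lies on the boundary of the cone~\eqref{eqn:nefcone}, not in its interior) — fortunately you only use nefness of the pullback, so this does not break the argument; and you say the face ``coincides with'' $\pi^*\Pic(Y_{\rho-1})\otimes\QQ$, when it can only equal a cone inside that subspace — what you need, and implicitly use, is that its linear span is that subspace.
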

 \begin{proof}
   The bundle $\mathscr{W}_i$ is globally generated by
   Corollary~\ref{coro:pullback}, and one surjective map $\mathscr{O}^{\oplus m}\to
   \mathscr{W}_i$ produces another $\textstyle{\bigwedge^{r_i}}\mathscr{O}^{\oplus m}\rightarrow
   \det(\mathscr{W}_i)$, so $\det(\mathscr{W}_i)$ is globally generated.   Corollary~\ref{coro:pullback} also implies that $\det(\mathscr{W}_i)$ is the pullback to
   $\mathcal{M}_\vartheta$ of the ample bundle
   $\mathscr{O}_{\Gr(\mathscr{F}_i, r_i)}(1)$ that generates the
   relative Picard group $\Pic(Y_i/Y_{i-1})$, so $\det(\mathscr{W}_1),\dots,\det(\mathscr{W}_\rho)$ form an
   integral basis for $\Pic(\mathcal{M}_\vartheta)$ by Theorem~\ref{thm:tower}. The proof of Lemma~\ref{lem:generic} works equally well for any $\theta$ with $\theta_i>0$ for $i>0$, so the statement about the ample cone follows from the isomorphism $L$. For the final statement, note that the bundle
 $\bigotimes_{i=1}^{\rho-1} \det(\mathscr{W}_i)$ is of the form $L(\eta)$ for $\eta =(-\sum_{i=1}^{\rho-1} r_i,1, \dots,
 1,0)$. Variation of GIT quotient gives a morphism $g\colon
 \mathcal{M}_\vartheta \to \Rep(Q,\underline{r})\git_\eta G$. We claim that $\Rep(Q,\underline{r})\git_{\eta} G
 \cong Y_{\rho-1}$,  which implies that $g$ is a Mori fibre space and hence  $\bigotimes_{i=1}^{\rho-1} \det(\mathscr{W}_i)$ is not ample as required.  To prove the claim, set $Q^
\prime =Q(\rho-1)$ and $\underline{r}^\prime = \underline{r}(\rho-1)$  and let
 $G_\rho \cong \GL(r_\rho)$
 denote the subgroup of $G$ corresponding to change of basis at vertex $\rho\in Q_0$. The $G_\rho$-invariant
 subalgebra of $\kk[\Rep(Q,\underline{r})]$ is isomorphic to $\kk[\Rep(Q^\prime,\underline{r}^\prime)]$ and
 hence the affine quotient $\Rep(Q,\underline{r})/G_\rho$ is
 $\Rep(Q^\prime,\underline{r}^\prime)$. Taking
 the GIT quotient of $\Rep(Q,\underline{r})$ by $G$ linearised by
 $\eta$ is equivalent to first taking the affine quotient of
 $\Rep(Q,\underline{r})$ by $G_\rho$ and then taking the GIT quotient
 by the group $G/G_\rho\cong G(\underline{r}^\prime)$ linearised by $\vartheta^\prime = (-\sum_{i=1}^{\rho-1}r_i,1, \dots, 1)\in G(\underline{r}^\prime)^\vee$. This proves the claim.  \end{proof}

\begin{corollary}
The variety $\mathcal{M}_\vartheta$ is Fano if $s_i>s^\prime_i:=\sum_{\{a\in Q_1 : \tail(a) = i\}} r_{\head(a)}$. 
\end{corollary}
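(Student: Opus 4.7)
The plan is to compute the exponent of each $\det(\mathscr{W}_i)$ in the anticanonical bundle formula from Corollary~\ref{coro:canonical}, and to then invoke the description of the ample cone from Lemma~\ref{lem:nefcone}.

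Starting from
\[
\omega_{\mathcal{M}_\vartheta}^{-1} \cong \bigotimes_{a\in Q_1}\Big(\det(\mathscr{W}_{\head(a)})^{r_{\tail(a)}}\otimes \det(\mathscr{W}_{\tail(a)})^{-r_{\head(a)}}\Big),
\]
I would reindex the tensor product by vertex rather than by arrow. Each arrow $a\in Q_1$ contributes $+r_{\tail(a)}$ to the exponent of $\det(\mathscr{W}_{\head(a)})$ and $-r_{\head(a)}$ to the exponent of $\det(\mathscr{W}_{\tail(a)})$. Summing arrows that have head (respectively tail) at a fixed vertex $i$ produces $s_i$ (respectively $s^\prime_i$) by the definitions, so the total exponent of $\det(\mathscr{W}_i)$ is $s_i - s^\prime_i$. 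Since $\mathscr{W}_0\cong \mathscr{O}_{\mathcal{M}_\vartheta}$ its determinant is trivial and can be dropped, so after regrouping we obtain
\[
\omega_{\mathcal{M}_\vartheta}^{-1} \cong \bigotimes_{i=1}^{\rho}\det(\mathscr{W}_i)^{s_i-s^\prime_i}.
\]

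Under the hypothesis $s_i>s^\prime_i$ for $1\leq i\leq \rho$, every exponent on the right-hand side is a positive integer. Lemma~\ref{lem:nefcone} shows that the cone
\[
\big\{\det(\mathscr{W}_1)^{\theta_1}\otimes \dots \otimes \det(\mathscr{W}_\rho)^{\theta_\rho} : \theta_1,\dots,\theta_\rho\in \QQ_{>0}\big\}
\]
is contained in the ample cone of $\mathcal{M}_\vartheta$, and therefore $\omega_{\mathcal{M}_\vartheta}^{-1}$ is ample. This is precisely the definition of Fano.

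The only real step is the combinatorial regrouping of the arrow-indexed product as a vertex-indexed product; once that is carried out, the conclusion follows immediately from Lemma~\ref{lem:nefcone}, so there is no serious obstacle.
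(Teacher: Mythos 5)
Your proposal is correct and matches the paper's proof: the paper likewise reindexes the arrow-indexed product in Corollary~\ref{coro:canonical} as $\omega_{\mathcal{M}_\vartheta}^{-1}\cong\bigotimes_{i\in Q_0}\det(\mathscr{W}_i)^{s_i-s'_i}$ and then appeals to the description of the ample cone. You have simply spelled out the regrouping and the appeal to Lemma~\ref{lem:nefcone} that the paper leaves implicit.
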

\begin{proof}
The formula from Corollary~\ref{coro:canonical} can be rewritten as
 $\omega_{\mathcal{M}_\vartheta}^{-1} \cong \bigotimes_{i\in Q_0} \det(\mathscr{W}_i)^{(s_i-s^\prime_i)}$.
\end{proof}

 \begin{examples}
 \label{exa:toric2}
 The following toric quiver flag varieties illustrate that while the line bundles $\det(\mathscr{W}_1),\dots,\det(\mathscr{W}_\rho)$ may generate the nef cone of  $\mathcal{M}_\vartheta$, this is not always the case.
 \begin{enumerate}
 \item[\one]  For the quiver $Q$ from Figure~\ref{fig:flag}(b), we have $\mathcal{M}_\vartheta \cong
 \mathbb{P}_{\mathbb{P}^1}\big(\mathscr{O}\oplus\mathscr{O}(1)\oplus\mathscr{O}(1)\big)$
 and
 $\omega_{\mathcal{M}_\vartheta}\cong \mathscr{W}_2^{-3}$ by Theorem~\ref{thm:tower} and Corollary~\ref{coro:canonical} respectively.  The 
 bundles $\mathscr{W}_1$ and $\mathscr{W}_2$ generate the nef cone, so
 $\mathcal{M}_\vartheta$ is not Fano. There are two GIT chambers: varying the GIT parameter beyond the nef cone from
 $\vartheta$ to $\theta:=(-1,-1,2)$ induces a flop of
 $\mathcal{M}_\vartheta$.
 \item[\two] The quiver $Q$ with three vertices and four arrows obtained from that in Figure~\ref{fig:flag}(b) by removing arrow 3 defines $\mathcal{M}_\vartheta \cong
   \mathbb{P}_{\mathbb{P}^1}\big(\mathscr{O}(1)\oplus\mathscr{O}(1)\big)\cong
   \mathbb{P}^1\times \mathbb{P}^1$. The map
   $\varphi_{\vert \mathscr{W}_2\vert}\colon
   \mathcal{M}_\vartheta\to \Rep(Q,\underline{r})\git_{(-1,0,1)} G\cong
   \mathbb{P}^3$ embeds $\mathbb{P}^1\times \mathbb{P}^1$ as a quadric hypersurface, so $\mathscr{W}_2$ is very ample.
\end{enumerate}
\end{examples}
   
 \section{A tilting bundle from tautological bundles}
 \label{sec:four}
 This section investigates the bounded derived category of coherent sheaves on a quiver flag variety. By applying the construction of Kapranov~\cite{Kapranov1} inductively to the tower of Grassmann bundles from Theorem~\ref{thm:tower}, we exhibit a tilting bundle on $\mathcal{M}_\vartheta$ and hence describe its bounded derived category of coherent sheaves.  We assume throughout that $\mathcal{M}_\vartheta$ is defined by a quiver $Q$ and dimension vector $\underline{r}$ satisfying strictly the numerical condition \eqref{eqn:numerical}.
 
 We first recall the Schur powers of a locally free sheaf. A partition $\lambda = (\lambda_1,\dots,\lambda_r)$ satisfying $\lambda_1\geq \dots \geq \lambda_r\geq 0$ is represented by its Young diagram comprising $\sum_i \lambda_i$ boxes arranged in left-justified rows, where the $i$th row comprises $\lambda_i$ boxes. Let $\Young(k,r)$ denote the set of Young diagrams with no more than $k$ columns and no more than $r$ rows, so $k\geq \lambda_1\geq \dots \geq \lambda_r\geq 0$. Given a vector space $E$ of dimension $r$ and a partition $\lambda$ with no more than $r$ rows, its Schur power $\mathbb{S}^{\lambda}E$  is the irreducible $\GL(E)$-module with highest weight $\lambda$. Familiar examples include $\mathbb{S}^{(d,0,\dots,0)}E = \Sym^d(E)$ and $\mathbb{S}^{(1,\dots,1,0,\dots,0)}E = \bigwedge^d(E)$ where the latter partition has $d$ 1's. In particular,  for $d=r$ we have $\det(E)=\mathbb{S}^{(1,\dots,1)}E$, and the construction extends to sequences $\lambda$ involving negative terms by defining 
 \[
 \mathbb{S}^\lambda E:= \mathbb{S}^{(\lambda_1+m,\dots,\lambda_d+m)} E\otimes \det(E)^{-m}
 \]
 for $m\in \NN$. Given partitions $\lambda, \mu$, the Littlewood--Richardson rules govern the decomposition of the tensor product of two Schur powers and the Schur power of a direct sum, namely
  \begin{equation}
 \label{eqn:LRrules}
 \mathbb{S}^{\lambda}E \otimes \mathbb{S}^{\mu}E = \bigoplus_{\nu} \big(\mathbb{S}^\nu E\big)^{\oplus c_{\lambda,\mu}^\nu}\quad\text{and}\quad\mathbb{S}^\nu(E\oplus F)  = \bigoplus_{\lambda, \mu} (\mathbb{S}^\lambda E\oplus \mathbb{S}^\mu F)^{\oplus c_{\lambda, \mu}^{\nu}}
 \end{equation}
 where $c_{\lambda,\mu}^\nu$ are the Littlewood--Richardson coefficients (see Fulton~\cite[\S8.3]{Fulton}). More generally, the construction of Schur powers and the decomposition formulae \eqref{eqn:LRrules} are valid for any locally free sheaf $\mathscr{E}$ of rank $r$, giving rise to locally free sheaves $\mathbb{S}^{\lambda}\mathscr{E}$.  
   
\begin{lemma}
\label{lem:Rpushforward}
Let $\mathcal{M}_\vartheta$ be a quiver flag variety. For any partition $\lambda$,  the higher direct images of $\mathbb{S}^\lambda\mathscr{W}_\rho$ under the Grassmann-bundle $\pi\colon \mathcal{M}_\vartheta=\Gr(\mathscr{F}_\rho,r_\rho)\to Y_{\rho-1}$ from \eqref{eqn:tower} satisfy 
 \[
 R^k\pi_*(\mathbb{S}^\lambda\mathscr{W}_\rho)=\left\{\begin{array}{cl} \mathbb{S}^\lambda\mathscr{F}_\rho & \text{if }k=0 \text{ and if }\lambda_1\geq \dots \geq \lambda_{r_\rho}\geq 0\\ 0 & \text{otherwise.}\end{array}\right.
 \]
 \end{lemma}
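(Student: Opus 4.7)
\emph{Proof plan.} The strategy is to compute the higher direct images fibrewise and assemble them via cohomology-and-base-change. The morphism $\pi\colon \Gr(\mathscr{F}_\rho,r_\rho)\to Y_{\rho-1}$ is smooth and projective, with fibre over any closed point $y\in Y_{\rho-1}$ isomorphic to the Grassmannian $\Gr(F_y,r_\rho)$, where $F_y$ denotes the fibre of $\mathscr{F}_\rho$ at $y$; restriction identifies $\mathbb{S}^\lambda\mathscr{W}_\rho$ on this fibre with the Schur power $\mathbb{S}^\lambda Q$ of the tautological quotient bundle $Q$ of rank $r_\rho$.

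The first step is to invoke the Borel--Weil--Bott theorem for Grassmannians: when $\lambda_1\geq\dots\geq\lambda_{r_\rho}\geq 0$ one has $H^0(\Gr(F_y,r_\rho),\mathbb{S}^\lambda Q)\cong\mathbb{S}^\lambda F_y$ with $H^k=0$ for $k>0$; in the remaining cases (either $\mathbb{S}^\lambda Q$ vanishes because $\lambda$ has too many parts, or the associated $\GL(s_\rho)$-weight $(\lambda_1,\dots,\lambda_{r_\rho},0,\dots,0)$ lies on a wall of a Weyl chamber) all cohomology groups vanish. Since these fibrewise dimensions are constant in $y$, Grauert's theorem implies that each $R^k\pi_*(\mathbb{S}^\lambda\mathscr{W}_\rho)$ is locally free of the predicted rank and that its formation commutes with arbitrary base change; this already yields the claimed vanishing in the degenerate cases.

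It remains to identify $\pi_*(\mathbb{S}^\lambda\mathscr{W}_\rho)$ with $\mathbb{S}^\lambda\mathscr{F}_\rho$ as a coherent sheaf rather than merely up to rank. The tautological surjection $\pi^*\mathscr{F}_\rho\twoheadrightarrow\mathscr{W}_\rho$ on $\Gr(\mathscr{F}_\rho,r_\rho)$ induces, by functoriality of Schur powers, a morphism $\mathbb{S}^\lambda\pi^*\mathscr{F}_\rho\to\mathbb{S}^\lambda\mathscr{W}_\rho$; using the projection formula and $\pi_*\mathscr{O}_{\Gr(\mathscr{F}_\rho,r_\rho)}\cong\mathscr{O}_{Y_{\rho-1}}$, adjunction yields a natural map $\mathbb{S}^\lambda\mathscr{F}_\rho\to\pi_*(\mathbb{S}^\lambda\mathscr{W}_\rho)$ whose restriction to each fibre is precisely the Borel--Weil--Bott isomorphism $\mathbb{S}^\lambda F_y\cong H^0(\Gr(F_y,r_\rho),\mathbb{S}^\lambda Q)$, and which is therefore globally an isomorphism by Nakayama's lemma together with the equality of ranks.

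The main obstacle is the fibrewise Borel--Weil--Bott input itself; this is however classical and can be referenced directly (for instance from Weyman's \emph{Cohomology of Vector Bundles and Syzygies}, Chapter 3, or from Kapranov~\cite{Kapranov1}). The remainder of the argument is formal, resting only on smoothness and properness of $\pi$, Grauert's theorem, and naturality of Schur functors.
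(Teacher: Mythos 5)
Your proposal follows the same route as the paper's proof: compute fibrewise using cohomology and base change, with Kapranov's Proposition~2.2(a) on Grassmannians (equivalently, Borel--Weil--Bott for the tautological quotient bundle) as the essential input. The only genuine addition you make is the explicit construction of the global comparison morphism $\mathbb{S}^\lambda\mathscr{F}_\rho\to\pi_*(\mathbb{S}^\lambda\mathscr{W}_\rho)$ via the tautological surjection, projection formula and adjunction, followed by a Nakayama argument; the paper ends with ``the result follows by varying the point $y$,'' leaving this identification implicit, so your version makes a step rigorous that the published proof merely gestures at.
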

 \begin{proof} 
 For $y\in Y_{\rho-1}$,  we have $\pi^{-1}(y)\cong\Gr(F_\rho,r_{\rho})$ where $F_\rho:=(F_\rho)_y$ is the fibre of $\mathscr{F}_\rho$ over $y$.  By cohomology and base change, the fibre of $R^k\pi_*(\mathbb{S}^\lambda\mathscr{W}_\rho)$ over $y$ is $H^k\big(\!\Gr(F_{\rho},r_{\rho}),\mathbb{S}^\lambda\mathscr{W}_\rho\vert_{\Gr(F_{\rho},r_{\rho})}\big)$. Corollary~\ref{coro:pullback} shows that $\mathscr{W}_\rho$ is the rank $r_\rho$ quotient bundle on $\Gr(\mathscr{F}_\rho,r_\rho)$, so the restriction of $\mathscr{W}_\rho$  to $\Gr(F_{\rho},r_{\rho})$ is the rank $r_\rho$ quotient bundle $\mathscr{E}$ on $\Gr(F_{\rho},r_{\rho})$ and hence the restriction of $\mathbb{S}^\lambda\mathscr{W}_\rho$ to $\Gr(F_{\rho},r_{\rho})$ is $\mathbb{S}^\lambda \mathscr{E}$. Kapranov~\cite[Proposition~2.2(a)]{Kapranov1} implies that   
 \[
 H^k\big(\!\Gr(F_{\rho},r_{\rho}),\mathbb{S}^\lambda\mathscr{E}\big)=\left\{\begin{array}{cl} \mathbb{S}^\lambda F_\rho & \text{if }k=0 \text{ and if }\lambda_1\geq \dots \geq \lambda_{r_\rho}\geq 0\\ 0 & \text{otherwise.}\end{array}\right.
 \]
The result follows by varying the point $y\in Y_{\rho-1}$.
 \end{proof}

\begin{proposition}
\label{prop:quivervanishing}
Let $\lambda_1,\dots,\lambda_\rho$ be partitions. Then 
\[
H^k\big(\mathcal{M}_\vartheta,\mathbb{S}^{\lambda_1}\mathscr{W}_1\otimes \dots \otimes \mathbb{S}^{\lambda_\rho}\mathscr{W}_\rho\big) = 0 \quad\text{for } k>0
\]
if each partition $\lambda_i=(\lambda_{i,1},\dots,\lambda_{i,r_{i}})$ satisfies $\lambda_{i,j}\geq -(s_i-r_i)$ for all $1\leq j\leq r_i$. 
\end{proposition}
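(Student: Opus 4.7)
My plan is to induct on $\rho$, pushing forward along the final Grassmann-bundle $\pi\colon \mathcal{M}_\vartheta = \Gr(\mathscr{F}_\rho, r_\rho) \to Y_{\rho-1}$ in the tower of Theorem~\ref{thm:tower} and combining the Leray spectral sequence with the fibrewise Bott vanishing behind Lemma~\ref{lem:Rpushforward} and the Littlewood--Richardson rules~\eqref{eqn:LRrules}.

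For the base case $\rho = 1$, the variety $\mathcal{M}_\vartheta = \Gr(\kk^{s_1}, r_1)$ is an ordinary Grassmannian and $\mathscr{W}_1$ is the tautological rank $r_1$ quotient bundle. I would deduce the vanishing directly from Bott's theorem applied to the extended $\GL(s_1)$-weight $(\lambda_{1,1}, \ldots, \lambda_{1, r_1}, 0, \ldots, 0)$. When $\lambda_{1, r_1} \geq 0$ this extended weight is dominant and only $H^0$ survives, which is precisely Kapranov's Proposition 2.2(a); when $-(s_1 - r_1) \leq \lambda_{1, r_1} < 0$, the entry $\lambda_{1, r_1} + s_1 - r_1$ of the shifted weight $(\lambda_{1, 1} + s_1 - 1, \ldots, \lambda_{1, r_1} + s_1 - r_1, s_1 - r_1 - 1, \ldots, 1, 0)$ lies in $\{0, 1, \ldots, s_1 - r_1 - 1\}$ and therefore coincides with one of the trailing entries, so Bott's theorem forces $H^* = 0$ outright.

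For the inductive step, Corollary~\ref{coro:pullback} identifies $\mathscr{W}_1, \ldots, \mathscr{W}_{\rho-1}$ as pullbacks from $Y_{\rho-1}$, and the projection formula therefore gives
\[
R\pi_*\Bigl(\textstyle\bigotimes_{i=1}^\rho \mathbb{S}^{\lambda_i}\mathscr{W}_i\Bigr) \cong \Bigl(\textstyle\bigotimes_{i=1}^{\rho-1} \mathbb{S}^{\lambda_i}\mathscr{W}^{(\rho-1)}_i\Bigr) \otimes R\pi_*\bigl(\mathbb{S}^{\lambda_\rho}\mathscr{W}_\rho\bigr).
\]
Applying the base-case Bott analysis fibrewise, where each fibre of $\pi$ is a Grassmannian of the form $\Gr(\kk^{s_\rho}, r_\rho)$, yields $R^k\pi_*(\mathbb{S}^{\lambda_\rho}\mathscr{W}_\rho) = 0$ for all $k > 0$ while $R^0\pi_*(\mathbb{S}^{\lambda_\rho}\mathscr{W}_\rho)$ is either zero (if $\lambda_{\rho, r_\rho} < 0$) or $\mathbb{S}^{\lambda_\rho}\mathscr{F}_\rho$ (if $\lambda_{\rho, r_\rho} \geq 0$). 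The Leray spectral sequence then either kills all cohomology immediately, or reduces the problem to proving higher vanishing on $Y_{\rho-1}$ of
\[
\bigotimes_{i < \rho} \mathbb{S}^{\lambda_i}\mathscr{W}^{(\rho-1)}_i \otimes \mathbb{S}^{\lambda_\rho}\mathscr{F}_\rho.
\]

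Since $\mathscr{F}_\rho = \bigoplus_{\{a\colon \head(a) = \rho\}} \mathscr{W}^{(\rho-1)}_{\tail(a)}$, I would expand $\mathbb{S}^{\lambda_\rho}\mathscr{F}_\rho$ using the second identity in~\eqref{eqn:LRrules}, collect the resulting Schur factors vertex by vertex via the first identity, and then fold in the pre-existing $\mathbb{S}^{\lambda_i}\mathscr{W}^{(\rho-1)}_i$, first writing $\mathbb{S}^{\lambda_i}\mathscr{W}^{(\rho-1)}_i = \mathbb{S}^{\lambda_i + (s_i-r_i)(1,\ldots,1)}\mathscr{W}^{(\rho-1)}_i \otimes \det(\mathscr{W}^{(\rho-1)}_i)^{-(s_i - r_i)}$ to bring $\lambda_i$ into genuine partition range before applying Littlewood--Richardson again. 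The output is a direct sum of terms $\bigotimes_{i < \rho}\mathbb{S}^{\nu_i}\mathscr{W}^{(\rho-1)}_i$, and the crux of the argument --- the one real obstacle --- is to check that each $\nu_i$ still satisfies the bound $\nu_{i, j} \geq -(s_i - r_i)$. This follows from the containment property $c^\sigma_{\alpha, \beta}\neq 0 \Rightarrow \sigma_j \geq \alpha_j$ of Littlewood--Richardson coefficients: taking $\alpha = \lambda_i + (s_i-r_i)(1, \ldots, 1)$, every partition $\sigma$ appearing in the expansion satisfies $\sigma_j \geq \lambda_{i,j} + (s_i - r_i)$, and after undoing the $\det$-shift we obtain $\nu_{i,j} = \sigma_j - (s_i - r_i) \geq \lambda_{i, j} \geq -(s_i - r_i)$. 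The inductive hypothesis on $Y_{\rho-1}$ then annihilates each summand, completing the induction.
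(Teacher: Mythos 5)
Your proof follows essentially the same route as the paper's: induct on $\rho$ via the tower of Grassmann bundles, apply the projection formula together with Lemma~\ref{lem:Rpushforward} to reduce to $Y_{\rho-1}$, expand $\mathbb{S}^{\lambda_\rho}\mathscr{F}_\rho$ and the remaining Schur factors with the Littlewood--Richardson rules, and invoke the Leray spectral sequence. The one place you go beyond the paper is in spelling out why the resulting $\nu_i$ satisfy $\nu_{i,j}\geq -(s_i-r_i)$, namely by shifting $\lambda_i$ into partition range and invoking the containment property $c^\sigma_{\alpha,\beta}\neq 0 \Rightarrow \sigma\supseteq\alpha$; the paper asserts this bound without explanation, so your argument fills a genuine (if small) gap.
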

\begin{proof}
We proceed by induction using the tower of Grassmann bundles from Theorem~\ref{thm:tower}. If $\rho=1$ then the isomorphism $\mathcal{M}_\vartheta\cong \Gr(\kk^{s_1},r_1)$ identifies $\mathscr{W}_1$ with the quotient bundle  of rank $r_1$ on $\Gr(\kk^{s_1},r_1)$, in which case the result is due to Kapranov~\cite[Proposition~2.2(a)]{Kapranov1}.  Assume by induction that the result holds for the quiver flag variety $Y_{\rho-1}$. Simplify notation by relabeling the tautological bundles on $Y_{\rho-1}$ as $\mathscr{V}_i:= \mathscr{W}^{(\rho-1)}_i$ for $i\leq \rho-1$.  Corollary~\ref{coro:pullback} implies that the pullback of $\mathscr{V}_i$ via $\pi\colon \mathcal{M}_\vartheta=\Gr(\mathscr{F}_\rho,r_\rho) \to Y_{\rho-1}$ is isomorphic to $\mathscr{W}_i$ for $i\leq \rho-1$, so 
 \[
 \mathbb{S}^{\lambda_1}\mathscr{W}_1\otimes \dots \otimes \mathbb{S}^{\lambda_\rho}\mathscr{W}_\rho\cong \pi^*\big( \mathbb{S}^{\lambda_1}\mathscr{V}_1\otimes \dots \otimes \mathbb{S}^{\lambda_{\rho-1}}\mathscr{V}_{\rho-1} \big)\otimes \mathbb{S}^{\lambda_{\rho}}\mathscr{W}_{\rho}.
 \]
 The projection formula gives
\[
\Rderived\pi_*\big(\mathbb{S}^{\lambda_1}\mathscr{W}_1\otimes \dots \otimes \mathbb{S}^{\lambda_\rho}\mathscr{W}_\rho\big) \cong \mathbb{S}^{\lambda_1}\mathscr{V}_1\otimes \dots \otimes \mathbb{S}^{\lambda_{\rho-1}}\mathscr{V}_{\rho-1}\otimes \Rderived\pi_*\big(\mathbb{S}^{\lambda_\rho}\mathscr{W}_\rho\big).
\]
 Lemma~\ref{lem:Rpushforward} implies that this complex is nonzero only when $\lambda_\rho = (\lambda_{\rho,1},\dots, \lambda_{\rho,r_{\rho}})$ satisfies $\lambda_{\rho,j}\geq 0$ for $1\leq j\leq r_\rho$, in which case it is equal to $ \mathbb{S}^{\lambda_1}\mathscr{V}_1\otimes \dots \otimes \mathbb{S}^{\lambda_{\rho-1}}\mathscr{V}_{\rho-1} \otimes \mathbb{S}^{\lambda_\rho}\mathscr{F}_\rho$ as a complex concentrated in degree zero.  Since $\mathscr{F}_\rho = \bigoplus_{\{a\in Q_1 : \head(a)=\rho\}} \mathscr{W}_{\tail(a)}$, the second Littlewood--Richardson rule from \eqref{eqn:LRrules} show that $\mathbb{S}^{\lambda_\rho}\mathscr{F}_\rho$ decomposes as a direct sum of locally free sheaves of the form $\mathbb{S}^{\mu_1}\mathscr{V}_1\otimes \dots \otimes \mathbb{S}^{\mu_{\rho-1}}\mathscr{V}_{\rho-1}$, where each partition $\mu_i=(\mu_{i,1},\dots,\mu_{i,r_i})$ satisfies $\mu_{i,j}\geq 0$. The first rule from \eqref{eqn:LRrules} then implies that $\mathbb{S}^{\lambda_1}\mathscr{V}_1\otimes \dots \otimes \mathbb{S}^{\lambda_{\rho-1}}\mathscr{V}_{\rho-1} \otimes \mathbb{S}^\lambda\mathscr{F}_\rho$ decomposes as a direct sum of bundles of the form $\mathbb{S}^{\nu_1}\mathscr{V}_1\otimes \dots \otimes \mathbb{S}^{\nu_{\rho-1}}\mathscr{V}_{\rho-1}$, where each $\nu_i=(\nu_{i,1},\dots,\nu_{i,r_i})$ satisfies $\nu_{i,j}\geq -(s_i-r_i)$. The higher cohomology groups of such bundles vanish by the inductive hypothesis, giving 
\[
H^p\big(Y_{\rho-1},R^q\pi_*(\mathbb{S}^{\lambda_1}\mathscr{W}_1\otimes \dots \otimes \mathbb{S}^{\lambda_\rho}\mathscr{W}_\rho)\big)=0
\]
for $p,q\geq 0$ with $(p,q)\neq (0,0)$, as long as each $\lambda_i$ satisfies $\lambda_{i,j}\geq -(s_i-r_i)$ for $1\leq j\leq r_{i}$. The degeneration of the Leray spectral sequence then implies that 
 \[
 H^k\big(\mathcal{M}_\vartheta,\mathbb{S}^{\lambda_1}\mathscr{W}_1\otimes \dots \otimes \mathbb{S}^{\lambda_\rho}\mathscr{W}_\rho\big)
\cong H^p\big(Y_{\rho-1},R^q\pi_*(\mathbb{S}^{\lambda_1}\mathscr{W}_1\otimes \dots \otimes \mathbb{S}^{\lambda_\rho}\mathscr{W}_\rho)\big)=0
 \]
for $k=p+q > 0$ and for the stated conditions on the partitions $\lambda_1,\dots,\lambda_\rho$. This proves the proposition.
\end{proof}

\begin{remark} 
\label{rem:toricvanishing}
If $\mathcal{M}_\vartheta$ is a  toric quiver flag variety, Proposition~\ref{prop:quivervanishing} states simply that
 \begin{equation}
 \label{eqn:toricvanishing}
 H^k\big(\mathcal{M}_\vartheta,\mathscr{W}_1^{\theta_1}\otimes \dots \otimes \mathscr{W}_\rho^{\theta_\rho}\big) = 0
  \end{equation}
 whenever $\theta_i > -s_i$ for all $1\leq i\leq \rho$.  Since each $\mathscr{W}_i$ is nef, the higher cohomology groups are known to vanish if $\theta_i\geq 0$ for all $1\leq i\leq \rho$, so \eqref{eqn:toricvanishing} extends this to include a limited range of negative tensor powers of the tautological bundles. By computing the cones of nonvanishing higher cohomology from Eisenbud--Musta{\c{t}}{\v{a}}--Stillman~\cite{EMS}, one can show that this is optimal in the sense that for each $1\leq i\leq \rho$, there exists $(\theta_1,\dots,\theta_\rho)$ with $\theta_j>-s_j$ for $j\neq i$ and $\theta_i=-s_i$ such that $H^{s_i-1}\big(\mathcal{M}_\vartheta,\mathscr{W}_1^{\theta_1}\otimes \dots \otimes \mathscr{W}_\rho^{\theta_\rho}\big) \neq 0$. To illustrate this we present an example.
 \end{remark} 

\begin{example}
\label{exa:nonvanishingcones}
For the quiver $Q$ shown in Figure~\ref{fig:cohomologycones}, Theorem~\ref{thm:tower} shows that the corresponding toric quiver flag variety is $\mathcal{M}_\vartheta\cong \mathbb{P}_{Y_2}\big(\mathscr{O}(1,0)\oplus\mathscr{O}(0,1)^{\oplus 2}\big)$, where $Y_2\cong \mathbb{P}_{\mathbb{P}^1}\big(\mathscr{O}\oplus \mathscr{O}(1)^{\oplus 4}\big)$. If we identify $\Pic(\mathcal{M}_\vartheta)$ with $\ZZ^3$ via the isomorphism sending $\mathscr{W}_1^{\theta_1}\otimes \mathscr{W}_2^{\theta_2}\otimes \mathscr{W}_3^{\theta_3}$ to $(\theta_1,\theta_2,\theta_3)$, then the ample cone of $\mathcal{M}_\vartheta$ is the positive octant and $\omega_{\mathcal{M}_\vartheta}^{-1} = (-3,3,3)$, so $\mathcal{M}_\vartheta$ is not Fano. 
  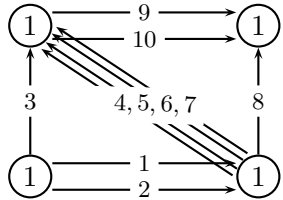
\begin{figure}[!ht]
    \centering
        \psset{unit=1cm}
        \begin{pspicture}(0,0)(3.5,2.2)
          \cnodeput(0.5,0){A}{1}
          \cnodeput(3.5,0){B}{1} 
          \cnodeput(0.5,2){C}{1}
          \cnodeput(3.5,2){D}{1}
          \psset{nodesep=0pt}
          \ncline[offset=5pt]{->}{A}{B}\lput*{:U}{\footnotesize{1}}
          \ncline[offset=-5pt]{->}{A}{B}\lput*{:U}{\footnotesize{2}}
          \ncline{->}{A}{C} \lput*{:270}{\footnotesize{3}}
          \ncline[offset=5pt]{<-}{C}{B}
          \ncline[offset=5pt]{->}{B}{C}
         \ncline[offset=2pt]{<-}{C}{B}
          \ncline[offset=2pt]{->}{B}{C}\lput*{:212}(0.5){\footnotesize{$\;\; \;\; 4,5,6,7$}}
           \ncline{->}{B}{D} \lput*{:270}{\footnotesize{8}}
           \ncline[offset=5pt]{->}{C}{D}\lput*{:U}{\footnotesize{9}}
          \ncline[offset=-5pt]{->}{C}{D}\lput*{:U}{\footnotesize{10}}
        \end{pspicture}
    \caption{The quiver $Q$ defining a $\mathbb{P}^2$-bundle over $\mathbb{P}_{\mathbb{P}^1}\big(\mathscr{O}\oplus \mathscr{O}(1)^{\oplus 4}\big)$. \label{fig:cohomologycones}}
  \end{figure}
  The cones in $\Pic(\mathcal{M}_\vartheta)\otimes_{\ZZ}\QQ$ comprising fractional line bundles with nonvanishing higher cohomology can be computed using Macaulay2~\cite{M2}. Table~\ref{tab:nonvanishingcones} shows that the cones correspond one-to-one with subsets $I$ of $\{1,2,3\}$, where $d_a:= \mathscr{W}_{\head(a)}\otimes\mathscr{W}_{\tail(a)}^{-1}$ is the degree of $y_a\in \kk[y_a : a\in Q_1]$. The region $\{(\theta_1,\theta_2,\theta_3) : \theta_1>-2,\theta_2>-5,\theta_3>-3\}$ avoids each cone and, moreover,  $H^1(\mathscr{W}_1^{-2})\neq 0$, $H^4(\mathscr{W}_1^{4}\otimes \mathscr{W}_2^{-5})\neq 0$ and $H^2(\mathscr{W}_1\otimes \mathscr{W}_2^{2}\otimes \mathscr{W}_3^{-3})\neq 0$, so the bounds from \eqref{eqn:toricvanishing} are sharp.
\begin{table}[!ht]
\begin{center}
\begin{tabular}{c|c|c|c}
  & $I$ & $\sum_{i\in I}(s_i-1)$ & $ \text{The cone corresponding to }I$ \\ \hline
 $H^1\neq 0$ & $\{1\}$ & 1 & $(-2,0,0) + \pos(-d_1,-d_2,d_3,d_4,\dots,d_{10})$  \\
 $H^2\neq 0$ & $\{3\}$ & 2 & $(1,2,-3) + \pos(d_1,\dots,d_7,-d_8,-d_9,-d_{10})$  \\
$H^3\neq 0$ & $\{1,3\}$ & 3 & $(-1,2,-3) + \pos(-d_1,-d_2,d_3,\dots,d_7,-d_8,-d_9,-d_{10})$  \\
$H^4\neq 0$ & $\{2\}$ & 4 & $(4,-5,0) + \pos(d_1,d_2,-d_3,\dots,-d_7,d_8,d_9,d_{10})$  \\
$H^5\neq 0$ & $\{1,2\}$ & 5 & $(2,-5,0) + \pos(-d_1,\dots,-d_7,d_8,d_9,d_{10})$  \\
$H^6\neq 0$ & $\{2,3\}$ & 6 & $(5,-3,-3) + \pos(d_1,d_2,-d_3,\dots,-d_{10})$  \\
$H^7\neq 0$ & $\{1,2,3\}$ & 7 & $(3,-3,-3) + \pos(-d_1,\dots,-d_{10})$  
\end{tabular}
\end{center}
\caption{Cones of nonvanishing higher cohomology}\label{tab:nonvanishingcones}
\end{table}
\end{example}

To state the main result of this section, we recall some standard notions from the study of derived categories.  A sheaf $\mathscr{E}$ on a scheme $X$ is \emph{exceptional} if $\Hom(\mathscr{E},\mathscr{E})=\kk$ and $\Ext^k(\mathscr{E},\mathscr{E})=0$ for $k>0$. A sequence of exceptional sheaves $(\mathscr{E}_1,\dots,\mathscr{E}_m)$ on $X$ is \emph{exceptional} if $\Ext^k(\mathscr{E}_j,\mathscr{E}_i)=0$ for $k\in \ZZ$ with $j>i$, and it is \emph{strongly exceptional} if in addition $\Ext^k(\mathscr{E}_i,\mathscr{E}_j) = 0$ for $k>0$ with $i<j$.  A sequence $(\mathscr{E}_1,\dots,\mathscr{E}_m)$ of coherent sheaves on $X$ is \emph{full} if the smallest triangulated subcategory of the bounded derived category of coherent sheaves $D^b(\coh(X))$ that contains all direct summands of each sheaf $\mathscr{E}_1,\dots,\mathscr{E}_m$ is equal to the whole of $D^b(\coh(X))$.  

The following result generalises that of Kapranov~\cite{Kapranov2} for a flag variety, though here we work with tautological quotient bundles rather than sub-bundles.
  
 \begin{theorem}
 \label{thm:quivertilt}
 For any quiver flag variety $\mathcal{M}_\vartheta$, the locally free sheaves on $\mathcal{M}_\vartheta$ of the form
  \begin{equation}
  \label{eqn:quivertilt}
 \Big\{\mathbb{S}^{\lambda_1}\mathscr{W}_1\otimes \dots \otimes \mathbb{S}^{\lambda_\rho}\mathscr{W}_\rho : \lambda_i \in \Young(s_i-r_i,r_i) \text{ for } 1\leq i\leq \rho\Big\}
 \end{equation}
 admit an order so that the resulting sequence is full and strongly exceptional. 
 \end{theorem}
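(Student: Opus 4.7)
The plan is to induct on $\rho$, leveraging the Grassmann-bundle tower of Theorem~\ref{thm:tower} together with Kapranov's foundational theorem \cite{Kapranov1} in its relative form. Throughout, write $T_\lambda := \mathbb{S}^{\lambda_1}\mathscr{W}_1\otimes \dots \otimes \mathbb{S}^{\lambda_\rho}\mathscr{W}_\rho$ for a multi-partition $\lambda = (\lambda_1,\dots,\lambda_\rho)$ with $\lambda_i\in \Young(s_i-r_i,r_i)$. The base case $\rho=1$ is immediate from Example~\ref{exa:Grassmannian} and Kapranov's theorem applied to $\mathcal{M}_\vartheta\cong \Gr(\kk^{s_1},r_1)$.

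For the inductive step, I would consider the Grassmann-bundle $\pi\colon \mathcal{M}_\vartheta = \Gr(\mathscr{F}_\rho,r_\rho) \to Y_{\rho-1}$ of Theorem~\ref{thm:tower} and invoke the relative version of Kapranov's theorem: the functors $F\mapsto \pi^*F\otimes \mathbb{S}^{\lambda_\rho}\mathscr{W}_\rho$ are fully faithful and produce a semiorthogonal decomposition
\[
D^b\big(\!\coh(\mathcal{M}_\vartheta)\big) \;=\; \Big\langle \pi^*D^b\big(\!\coh(Y_{\rho-1})\big)\otimes \mathbb{S}^{\lambda_\rho}\mathscr{W}_\rho \;:\; \lambda_\rho\in \Young(s_\rho-r_\rho,r_\rho)\Big\rangle
\]
in a suitable Kapranov order on $\lambda_\rho$. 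The inductive hypothesis gives a full strongly exceptional collection $\big\{\bigotimes_{i<\rho}\mathbb{S}^{\lambda_i}\mathscr{V}_i\big\}$ on $Y_{\rho-1}$, where $\mathscr{V}_i := \mathscr{W}^{(\rho-1)}_i$. Since Corollary~\ref{coro:pullback} yields $\pi^*\mathscr{V}_i\cong \mathscr{W}_i$ for $i<\rho$, pulling back this base collection and tensoring with each $\mathbb{S}^{\lambda_\rho}\mathscr{W}_\rho$ produces a full generating collection on $\mathcal{M}_\vartheta$ of exactly the form \eqref{eqn:quivertilt}. I would order it lexicographically, using the induced Kapranov ordering on $\lambda_\rho$ as the primary key and the inductive ordering on $(\lambda_1,\dots,\lambda_{\rho-1})$ as the secondary key; fullness follows directly from the inductive hypothesis combined with the semiorthogonal decomposition.

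To verify strong exceptionality, I would compute
\[
\Ext^k(T_\lambda,T_\mu) \;\cong\; H^k\big(\mathcal{M}_\vartheta,\,T_\lambda^\vee \otimes T_\mu\big)
\]
using that the factors at different vertices may be disentangled. For each $i$, the dual $(\mathbb{S}^{\lambda_i}\mathscr{W}_i)^\vee$ is a Schur power of $\mathscr{W}_i$ indexed by a partition with all entries in the interval $[-(s_i-r_i),0]$, and tensoring with $\mathbb{S}^{\mu_i}\mathscr{W}_i$ for $\mu_i\in \Young(s_i-r_i,r_i)$ shifts these entries upward by at most $s_i-r_i$. Two applications of the Littlewood--Richardson rules \eqref{eqn:LRrules} therefore decompose $T_\lambda^\vee \otimes T_\mu$ as a direct sum of bundles $\bigotimes_i \mathbb{S}^{\nu_i}\mathscr{W}_i$ where each $\nu_i=(\nu_{i,1},\dots,\nu_{i,r_i})$ satisfies the bound $\nu_{i,j}\geq -(s_i-r_i)$. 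Proposition~\ref{prop:quivervanishing} then forces $H^k=0$ for $k>0$, giving the required $\Ext^{>0}$-vanishing. The semiorthogonality needed for an exceptional (not merely strongly exceptional) collection is guaranteed by the ordering: within a fixed $\lambda_\rho$-block it comes from the inductive hypothesis, while between different $\lambda_\rho$-blocks it comes from Kapranov's relative semiorthogonal decomposition above.

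The main obstacle will be the combinatorial bookkeeping in the middle step: confirming that \emph{every} partition $\nu_i$ produced by the Littlewood--Richardson expansion of $T_\lambda^\vee\otimes T_\mu$ lands in the vanishing range of Proposition~\ref{prop:quivervanishing}, and then fusing the Kapranov ordering on fibers with the inductive ordering on the base into a single total order that simultaneously certifies the $\Ext^k$ vanishing for $k>0$ and the $\Hom$-vanishing for the reverse direction. Once this is in place, the result will follow cleanly by induction.
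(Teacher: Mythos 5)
Your proposal follows essentially the same route as the paper: induction on $\rho$ using the Grassmann-bundle tower of Theorem~\ref{thm:tower}, the relative Kapranov/Orlov semiorthogonal decomposition to get a full exceptional sequence, and Proposition~\ref{prop:quivervanishing} to upgrade it to a strongly exceptional one. There is one small but worthwhile improvement in your version: where the paper writes $\mathscr{H}\mathit{om}\big(\mathbb{S}^{\nu_i}\mathscr{W}_i,\mathbb{S}^{\mu_i}\mathscr{W}_i\big) = \mathbb{S}^{\mu_i-\nu_i}\mathscr{W}_i$, this identity is actually false as stated (already for $\nu=\mu=(1,0)$ on a rank-2 bundle the left side is $\mathscr{W}^\vee\otimes\mathscr{W}$, which is a sum of two Schur powers, not a single one). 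Your more careful analysis --- taking the dual $(\mathbb{S}^{\lambda_i}\mathscr{W}_i)^\vee \cong \mathbb{S}^{-w_0\lambda_i}\mathscr{W}_i$ with entries in $[-(s_i-r_i),0]$, tensoring with $\mathbb{S}^{\mu_i}\mathscr{W}_i$, and applying the Littlewood--Richardson rule --- is the honest way to see that every Schur summand $\mathbb{S}^{\nu_i}\mathscr{W}_i$ in the expansion has $\nu_{i,j}\geq -(s_i-r_i)$: after shifting by $m=s_i-r_i$ both factors become genuine partitions with nonnegative entries, the LR constituents $\gamma$ satisfy $\gamma\supseteq\alpha$ so $\gamma_{r_i}\geq 0$, and unshifting gives the desired lower bound $-m$. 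So the ``combinatorial bookkeeping'' you flag as the main obstacle does go through. Your final worry about fusing the fiber and base orderings is unnecessary: Orlov's theorem on the Grassmann bundle already hands you the required semiorthogonal decomposition compatible with the block structure, so the lexicographic order you propose (Kapranov order on $\lambda_\rho$ as primary key, inductive order on $(\lambda_1,\dots,\lambda_{\rho-1})$ as secondary) does the job. In short, the proposal is correct and, in its treatment of the $\mathscr{H}\mathit{om}$ bundle, slightly more rigorous than the paper's own exposition.
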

 \begin{proof} 
We proceed by induction using the tower from Theorem~\ref{thm:tower}, adopting the same notation as in the proof of Proposition~\ref{prop:quivervanishing}. If $\rho=1$ then $\mathcal{M}_\vartheta\cong \Gr(\kk^{s_1},r_1)$, in which case the result is due to Kapranov~\cite{Kapranov1}.  Assume by induction that the result holds for the quiver flag variety $Y_{\rho-1}$. The set of locally free sheaves from \eqref{eqn:quivertilt} coincides with the set 
  \[
  \Big\{\pi^*\big( \mathbb{S}^{\lambda_1}\mathscr{V}_1\otimes \dots \otimes \mathbb{S}^{\lambda_{\rho-1}}\mathscr{V}_{\rho-1} \big)\otimes \mathbb{S}^{\lambda_{\rho}}\mathscr{W}_{\rho} \; :\; \lambda_i \in \Young(s_i-r_i,r_i) \text{ for } 1\leq i\leq \rho\Big\}, 
 \]
 The inductive hypothesis shows that the bundles of the form $\mathbb{S}^{\lambda_1}\mathscr{V}_1\otimes \dots \otimes \mathbb{S}^{\lambda_{\rho-1}}\mathscr{V}_{\rho-1} $ provide a full exceptional sequence on $Y_{\rho-1}$. Since $\mathscr{W}_\rho$ is isomorphic to the rank $r_\rho$ quotient bundle on $\Gr(\mathscr{F}_\rho,r_\rho)$, applying Kapranov's result to the Grassmann-bundle $\Gr(\mathscr{F}_\rho,r_\rho)$ following Orlov~\cite[\S3]{Orlov} implies that the bundles \eqref{eqn:quivertilt} can be ordered to form 
a full exceptional sequence on $\mathcal{M}_\vartheta$. It remains to prove that the higher $\Ext$ groups vanish between any pair of bundles from \eqref{eqn:quivertilt} or, equivalently, that
 \[
 H^k\Big(\mathcal{M}_\vartheta, \bigotimes_{1\leq i\leq \rho}\mathscr{H}\emph{om}\big(\mathbb{S}^{\nu_i}\mathscr{W}_i,  \mathbb{S}^{\mu_i}\mathscr{W}_i\big)\Big) = 0\quad \text{for }k>0
\] 
 whenever $\nu_i,\mu_i\in \Young(s_i-r_i,r_i)$ for $1\leq i\leq \rho$.  We have $\mathscr{H}\emph{om}\big(\mathbb{S}^{\nu_i}\mathscr{W}_i,  \mathbb{S}^{\mu_i}\mathscr{W}_i\big) = \mathbb{S}^{\mu_i-\nu_i}\mathscr{W}_i$ where the subtraction is performed componentwise. For $1\leq i\leq \rho$, the difference $\lambda_i:=\mu_i-\nu_i$ is a sequence $(\lambda_{i,1},\dots,\lambda_{i,r_i})$ satisfying $-(s_i-r_i)\leq \lambda_{i,j}\leq s_i-r_i$ for $1\leq j\leq r_i$, so the necessary vanishing follows from Proposition~\ref{prop:quivervanishing}.
\end{proof}

\begin{remark}
 For the quiver $Q$ with vertex set $Q_0=\{0,1,2\}$ and arrow set comprising $m$ arrows from 0 to 1 and $n$ arrows from 1 to 2, and for any dimension vector $\underline{r}$ satisfying \eqref{eqn:numerical}, the result of Theorem~\ref{thm:quivertilt} is due to Halic~\cite[Example~8.2.1]{Halic}. In this case $\mathcal{M}_\vartheta(Q,\underline{r})$ is Fano; in fact \cite{Halic} uses the anticanonical character rather than $\vartheta$, but both lie in the cone \eqref{eqn:nefcone}. 
\end{remark}

For any quiver flag variety $\mathcal{M}_\vartheta$, define the locally free sheaf 
\[
 \mathscr{T}:=\bigoplus_{1\leq i\leq \rho}\bigoplus_{\lambda_i\in \Young(s_i-r_i,r_i)} \mathbb{S}^{\lambda_1}\mathscr{W}_1\otimes \dots \otimes \mathbb{S}^{\lambda_\rho}\mathscr{W}_\rho
 \]
 on $\mathcal{M}_\vartheta$. Set $A=\End(\mathscr{T})$, and let $\modA$ denote the abelian category of finitely generated right $A$-modules. An observation of Baer~\cite{Baer} and Bondal~\cite{Bondal} provides the following simple description of the bounded derived category of coherent sheaves on $\mathcal{M}_\vartheta$.
  
 \begin{corollary}
 For any quiver flag variety $\mathcal{M}_\vartheta$, the functor 
 \[
 \Rderived\Hom(\mathscr{T},-)\colon
 D^b\big(\!\coh(\mathcal{M}_\vartheta)\big)\longrightarrow D^b\big(\!\modA\big)
 \]
 is an exact equivalence of triangulated categories.
 \end{corollary}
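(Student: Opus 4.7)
The proof plan is to verify that $\mathscr{T}$ satisfies the hypotheses of the tilting theorem of Baer and Bondal and then invoke that result directly.

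First I would observe that Theorem~\ref{thm:quivertilt} does essentially all of the geometric work. Since the indecomposable summands of $\mathscr{T}$, after suitable reordering, form a full strongly exceptional sequence $(\mathscr{E}_1,\dots,\mathscr{E}_m)$ on $\mathcal{M}_\vartheta$, two key properties of $\mathscr{T}$ follow immediately. Strong exceptionality gives $\Ext^k(\mathscr{E}_i,\mathscr{E}_j)=0$ for all $k>0$ and all pairs $i,j$ (the cases $i>j$, $i=j$, $i<j$ being covered respectively by the exceptional condition on the sequence, the exceptional condition on each summand, and the strong exceptional condition), and summing gives $\Ext^k(\mathscr{T},\mathscr{T})=0$ for all $k>0$. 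Fullness of the sequence, together with the fact that $\mathscr{T}$ has every $\mathscr{E}_i$ as a direct summand, implies that the smallest thick triangulated subcategory of $D^b(\coh(\mathcal{M}_\vartheta))$ containing $\mathscr{T}$ is all of $D^b(\coh(\mathcal{M}_\vartheta))$, i.e.\ $\mathscr{T}$ is a classical generator.

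Next I would note that $\mathscr{T}$ is locally free of finite rank and $\mathcal{M}_\vartheta$ is smooth and projective by Proposition~\ref{prop:finemoduli}, so $\mathscr{T}$ is a perfect complex and $A=\End(\mathscr{T})$ has finite global dimension. At this point $\mathscr{T}$ satisfies all three standard axioms for a tilting bundle: it has vanishing higher self-extensions, it classically generates $D^b(\coh(\mathcal{M}_\vartheta))$, and its endomorphism algebra has finite global dimension.

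The conclusion is then an application of the theorem of Baer~\cite{Baer} and Bondal~\cite{Bondal}: for such a tilting bundle on a smooth projective variety, the functor $\Rderived\Hom(\mathscr{T},-)$ is an exact equivalence from $D^b(\coh(\mathcal{M}_\vartheta))$ to $D^b(\modA)$, with quasi-inverse $-\otimes^{\mathbf{L}}_{A}\mathscr{T}$. There is no real obstacle here since Theorem~\ref{thm:quivertilt} already supplies the only non-formal input; the corollary is essentially a packaging statement, and the only point requiring a moment's care is matching conventions (tautological quotient bundles, right modules over $A$, and the standard form of Baer--Bondal in the literature) so that the displayed functor is indeed the equivalence, rather than its opposite.
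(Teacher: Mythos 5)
Your proof is correct and follows essentially the same route as the paper's: Theorem~\ref{thm:quivertilt} supplies a full strong exceptional collection, and one then invokes the standard fact (attributed in the paper to Baer, Bondal, and King) that the direct sum of such a collection is a tilting bundle inducing the stated derived equivalence. The paper simply cites this fact, whereas you unpack the three tilting axioms explicitly, but the content is the same.
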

 \begin{proof}
 It is well known that the direct sum of all sheaves in a full strong, exceptional collection is a tilting sheaf; see, for example, King~\cite{King2}.
 \end{proof}
 
 \begin{remark} For a  toric quiver flag variety, the set \eqref{eqn:quivertilt} is simply
 \begin{equation}
  \label{eqn:torictilt}
 \Big\{\mathscr{W}_1^{\theta_1}\otimes \dots \otimes \mathscr{W}_\rho^{\theta_\rho} : 0\leq \theta_i < s_i \text{ for } 1\leq i\leq \rho\Big\},
 \end{equation}
 so every toric quiver flag variety admits a tilting bundle whose summands are line bundles.
   \begin{enumerate}
   \item[\one] Existence of a tilting bundle on  $\mathcal{M}_\vartheta$ which decomposes into a direct sum of line bundles follows from Costa--Miro-Roig~\cite[Proposition~4.9]
{CostaMiroRoig} since $\mathcal{M}_\vartheta$ has a splitting fan. Here we are able to exhibit an explicit tilting bundle since we need not appeal to Serre vanishing. 
However, our hypotheses on the toric variety are more restrictive since, for example,  $\mathbb{F}_2$ has a splitting fan but it is not a toric quiver flag variety.  
    \item[\two] The set \eqref{eqn:torictilt} can be characterised as the line bundles obtained as nonnegative powers of the tautological 
bundles on $\mathcal{M}_\vartheta$ that arise in the Buchsbaum-Rim complex of the map
    \[
    \bigoplus_{a\in Q_1} \mathscr{W}_{\tail(a)}\boxtimes \mathscr{W}_{\head(a)}^{-1} \longrightarrow   \bigoplus_{i\in Q_0} \mathscr{W}_{i}\boxtimes \mathscr{W}_{i}^{-1}
    \]
   given by $w_a\boxtimes 1 - 1\boxtimes w_a^*$, where $w_a\colon \mathscr{W}_{\tail(a)}\to \mathscr{W}_{\head(a)}$ are the canonical maps for $a
  \in Q_1$. This observation follows from a calculation by Broomhead~\cite[Section~4.1]{Broomhead}.
    \item[\three] In the special case where $\mathcal{M}_\vartheta$  is Fano, 
Altmann--Hille~\cite{AltmannHille} implies that the bundles $\mathscr{W}_0,\dots, \mathscr{W}_\rho$ generate a full and faithful triangulated subcategory of $D^b\big(\!
\coh(\mathcal{M}_\vartheta)\big)$. Our Theorem~\ref{thm:quivertilt} extends this sequence to provide all summands of a tilting generator for  $D^b\big(\!
\coh(\mathcal{M}_\vartheta)\big)$. Note that \cite{AltmannHille} need not assume that $Q$ has a 
unique source (though it does require that the canonical linearisation is generic).
     \end{enumerate}
    \end{remark}

 \section{Multigraded linear series}
 \label{sec:five}
 We now demonstrate how quiver flag varieties arise naturally as
 ambient spaces in algebraic geometry. This description generalises the
 classical construction of the linear series associated to a line
 bundle on an algebraic variety or, more generally, the Grassmannian
 associated to a vector bundle of higher rank. 
  
 Let $X$ be a projective scheme defined over an algebraically closed
 field $\kk$. For $\rho\geq 1$, consider a sequence $(\mathscr{E}_1,\dots, \mathscr{E}_\rho)$ of distinct,
 nontrivial locally free sheaves on $X$, each with $H^0(X,\mathscr{E}_i)\neq 0$. Augment this with the
 trivial bundle $\mathscr{E}_0:=\mathscr{O}_X$ to obtain the sequence 
  \[
  \underline{\mathscr{E}}:=\big(
 \mathscr{O}_X, \mathscr{E}_1, \dots, \mathscr{E}_\rho\big).
  \]
 We say that the sequence $\underline{\mathscr{E}}$ is \emph{weakly exceptional} if
 $\Hom(\mathscr{E}_j,\mathscr{E}_i)=0$ for $j>i$.    
 
 Every weakly exceptional sequence $\underline{\mathscr{E}}$ determines a finite, acyclic quiver $Q$ with a
 unique source, called the \emph{quiver of sections of $\underline{\mathscr{E}}$}. The vertex set $Q_0=\{0,1,\dots, \rho\}$ corresponds
 to the bundles in $\underline{\mathscr{E}}$. As for the arrow set $Q_1$, for each
 $i,j\in Q_0$ let $V_{i,j}\subseteq \Hom(\mathscr{E}_{i},\mathscr{E}_j)$ denote the
 $\kk$-vector subspace spanned by maps factoring via $\mathscr{E}_k$ for some
 $k\neq i,j$.  The number of arrows in $Q$ from $i$ to $j$ is defined
 to be $n_{i,j}:=\dim \big(\Hom(\mathscr{E}_{i},\mathscr{E}_j)/V_{i,j}\big)$ if $i\neq j$,
 and is set to be zero if $i=j$. Projectivity of $X$ ensures that $Q_1$ is finite, while $Q$ is acyclic and has $0\in Q_0$ as the unique source since the sequence is weakly exceptional. 
  
 \begin{example}
 \label{exa:flagqos}
   For $\rho > 0$, fix integers $n>r_1>r_2>\dots > r_\rho>0$ and write
   $X:=\Fl(n;r_1,\dots,r_\rho)$ for the variety of quotient flags (see
   Example~\ref{exa:flag}). For $1\leq j\leq \rho$, let
   $\mathscr{O}_X^{\oplus n}\to \mathscr{E}_j$ denote the map of
   vector bundles whose fibre over a point $x\in X$ is the surjective map
   $\kk^n\to E_j$ obtained from the corresponding flag. To compute the quiver of sections of  $\underline{\mathscr{E}}$, note that each $\mathscr{E}_j$ is globally generated, and every element in $\Hom(\mathscr{O}_X,\mathscr{E}_j)$ factors via $\mathscr{E}_i$ for all $i<j$ since so does the corresponding map of the flag. Thus far, then, the quiver of sections coincides with the quiver $Q$ shown in Figure~\ref{fig:flag}(a). Kapranov~\cite{Kapranov2} shows that the sequence $\underline{\mathscr{E}}:=(
 \mathscr{O}_X, \mathscr{E}_1, \dots, \mathscr{E}_\rho)$ is strongly exceptional, and hence it is weakly exceptional. It follows that the quiver of sections has no additional arrows and hence is equal to $Q$. 
 \end{example}

 \begin{remark}
 \label{rem:framings2}
 The process of augmenting the sequence $(\mathscr{E}_1,\dots, \mathscr{E}_\rho)$
 with the trivial bundle to produce $\underline{\mathscr{E}}$ is the
 geometric analogue of the framing of a quiver by the auxiliary
 dimension vector $(n_{0,1}, \dots, n_{0,\rho})$ to produce the quiver of sections $Q$; compare
 Remark~\ref{rem:Ginzburg}.
 \end{remark}

The \emph{multigraded linear series} of a weakly exceptional sequence $\underline{\mathscr{E}}=\big( \mathscr{O}_X, \mathscr{E}_1,
 \dots, \mathscr{E}_\rho\big)$ with quiver of sections $Q$ is the quiver flag variety $\vert\underline{\mathscr{E}}\vert :=
 \mathcal{M}_\vartheta(Q,\underline{r})$ for $\underline{r} = (1,\rank(\mathscr{E}_1),\dots,
 \rank(\mathscr{E}_\rho))$.  A multigraded linear series may a priori be empty, but when it is not it carries tautological vector bundles
 $\mathscr{W}_0, \mathscr{W}_1, \dots, \mathscr{W}_\rho$, where $\mathscr{W}_0$ is the trivial bundle and where each $\mathscr{W}_i$ has
 rank $r_i=\rank(\mathscr{E}_i)$. 
 
 Having defined multigraded linear series to be certain quiver flag varieties, we now generalise Example~\ref{exa:flagqos} to show that every quiver flag variety is a multigraded linear series.

\begin{lemma}
\label{lem:tautologicalmultigraded} 
Let $\underline{\mathscr{E}} = (\mathscr{W}_0,\mathscr{W}_1,\dots,\mathscr{W}_\rho)$ denote the sequence comprising tautological bundles on a nonempty quiver flag variety $\mathcal{M}_\vartheta(Q,\underline{r})$. Then $\underline{\mathscr{E}}$ is weakly exceptional, the quiver of sections of $\underline{\mathscr{E}}$ is equal to $Q$, and the multigraded linear series is $\vert\underline{\mathscr{E}}\vert\cong\mathcal{M}_\vartheta(Q,\underline{r})$.
\end{lemma}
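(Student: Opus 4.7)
The lemma has three assertions: (a) the sequence $\underline{\mathscr{E}}=(\mathscr{W}_0,\dots,\mathscr{W}_\rho)$ is weakly exceptional; (b) its quiver of sections equals $Q$; and (c) $\vert\underline{\mathscr{E}}\vert\cong \mathcal{M}_\vartheta(Q,\underline{r})$. Once (b) is established, (c) is immediate from the definition of multigraded linear series together with the identity $\rank(\mathscr{W}_i)=r_i$. The nonvanishing $H^0(\mathscr{W}_i)\neq 0$ for $i>0$ in (a) follows from Corollary~\ref{coro:pullback}\one, so the substantive content is the Hom-vanishing in (a) and the arrow count in (b).

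My plan is to handle both (a) and (b) by induction on $\rho$ using the tower $\pi\colon \mathcal{M}_\vartheta\to Y_{\rho-1}$ from Theorem~\ref{thm:tower}; in the induction I relabel the tautological bundles on $Y_{\rho-1}$ as $\mathscr{V}_0,\dots,\mathscr{V}_{\rho-1}$ as in Proposition~\ref{prop:quivervanishing}. The base case $\rho=1$ reduces to well-known Hom-computations for the tautological quotient on $\Gr(\kk^{s_1},r_1)$ and agrees with the path algebra of the Kronecker quiver. For the inductive step of (a) with $j<\rho$, both $\mathscr{W}_i$ and $\mathscr{W}_j$ are pullbacks from $Y_{\rho-1}$; since $\pi_*\mathscr{O}_{\mathcal{M}_\vartheta}=\mathscr{O}_{Y_{\rho-1}}$, the Hom-space equals $\Hom_{Y_{\rho-1}}(\mathscr{V}_j,\mathscr{V}_i)=0$ by induction. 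When $j=\rho$, the projection formula combined with Lemma~\ref{lem:Rpushforward} applied to $\mathscr{W}_\rho^\vee=\mathbb{S}^{(0,\dots,0,-1)}\mathscr{W}_\rho$ gives $R\pi_*\mathscr{W}_\rho^\vee=0$ (the sequence fails the nonnegativity hypothesis), whence $\Hom(\mathscr{W}_\rho,\pi^*\mathscr{V}_i)=H^0(Y_{\rho-1},\pi_*\mathscr{W}_\rho^\vee\otimes\mathscr{V}_i)=0$.

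For (b), the strategy is to prove the finer statement that the universal $\kk$-algebra map $\kk Q\to\End\big(\bigoplus_i\mathscr{W}_i\big)$ induces an isomorphism $\Hom(\mathscr{W}_i,\mathscr{W}_j)\cong e_i(\kk Q)e_j$ for all $i\leq j$, extending Corollary~\ref{coro:pullback}\two\ from the case $i=0$ to arbitrary tails. The inductive step for $j<\rho$ again follows from $\pi_*\mathscr{O}=\mathscr{O}$, noting that paths in $Q$ from $i$ to $j<\rho$ coincide with paths in $Q(\rho-1)$. For $j=\rho$, the projection formula together with $\pi_*\mathscr{W}_\rho\cong\mathscr{F}_\rho$ (Lemma~\ref{lem:Rpushforward}) yields
\[
\Hom(\mathscr{W}_i,\mathscr{W}_\rho)\cong\bigoplus_{\{a\in Q_1:\,\head(a)=\rho\}}\Hom_{Y_{\rho-1}}(\mathscr{V}_i,\mathscr{V}_{\tail(a)}),
\]
which by induction equals $\bigoplus_a e_i(\kk Q)e_{\tail(a)}=e_i(\kk Q)e_\rho$, since every path from $i$ to $\rho$ factors uniquely as a path to the tail of some terminal arrow composed with that arrow. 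Under this identification the subspace $V_{i,\rho}$ of maps factoring through some intermediate $\mathscr{W}_k$ corresponds to the span of paths from $i$ to $\rho$ of length at least two, so the quotient has dimension equal to $n_{i,\rho}$, as required; for $i>j$ the quotient is zero by (a), matching $n_{i,j}=0$.

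The main obstacle is verifying that the inductively constructed isomorphism $\Hom(\mathscr{W}_i,\mathscr{W}_j)\cong e_i(\kk Q)e_j$ is natural with respect to composition, so that the subspace $V_{i,j}$ on the sheaf side is identified with the span of composite paths on the algebra side. I expect this to follow at each Grassmann-bundle step from the universal property of the tautological surjection $\pi^*\mathscr{F}_j\twoheadrightarrow\mathscr{W}_j$, which is precisely what translates arrows of $Q$ with head $j$ into the components of that surjection.
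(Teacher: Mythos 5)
Your proof is correct and takes a genuinely different route from the paper's. The paper disposes of weak exceptionality in one line by observing that $(\mathscr{W}_0,\dots,\mathscr{W}_\rho)$ is a subsequence of the strongly exceptional collection from Theorem~\ref{thm:quivertilt}, and then argues the quiver of sections is $Q$ via Corollary~\ref{coro:pullback}\two: a putative extra irreducible arrow $\mathscr{W}_i\to\mathscr{W}_j$ composed with any $\mathscr{O}\to\mathscr{W}_i$ gives a section of $\mathscr{W}_j$, and the isomorphism $H^0(\mathscr{W}_j)\cong e_0(\kk Q)e_j$ forces that section (and hence the map) to decompose through arrows of $Q$. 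You avoid Theorem~\ref{thm:quivertilt} entirely and re-derive the needed vanishing directly from the Grassmann-bundle tower, using $\pi_*\mathscr{O}=\mathscr{O}$ for $j<\rho$ and $R\pi_*\mathscr{W}_\rho^\vee=0$ for $j=\rho$ (Lemma~\ref{lem:Rpushforward} at $\lambda=(0,\dots,0,-1)$, a use the paper itself makes in Proposition~\ref{prop:quivervanishing}). You also prove the finer statement $\Hom(\mathscr{W}_i,\mathscr{W}_j)\cong e_i(\kk Q)e_j$ for all $i\leq j$, extending Corollary~\ref{coro:pullback}\two\ from the case $i=0$ and making the arrow count $n_{i,j}$ fall out immediately. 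Your route buys self-containment (only the push-forward lemma, not the full Littlewood--Richardson machinery behind Theorem~\ref{thm:quivertilt}) and a stronger conclusion; the paper's buys brevity by reusing Section~4. The naturality concern you flag is not a real obstacle: the isomorphism $\Hom(\mathscr{W}_i,\mathscr{W}_\rho)\cong\bigoplus_{\head(a)=\rho}\Hom(\mathscr{V}_i,\mathscr{V}_{\tail(a)})$ is induced by the universal surjection $\pi^*\mathscr{F}_\rho\twoheadrightarrow\mathscr{W}_\rho$, whose summand projections are exactly the bundle maps assigned to arrows $a$ with $\head(a)=\rho$, so composition of paths tracks composition of the constructed maps (this is the same mechanism underlying Corollary~\ref{coro:pullback}\two). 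One small point worth adding: the definition of multigraded linear series requires the sheaves in $\underline{\mathscr{E}}$ to be distinct; the paper cites Remark~\ref{rem:strict} for this, though it also follows from your Hom-vanishing together with $\mathrm{id}\in\Hom(\mathscr{W}_i,\mathscr{W}_i)$.
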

\begin{proof}
The nontrivial tautological bundles $\mathscr{W}_1,\dots,\mathscr{W}_\rho$ on  $\mathcal{M}_\vartheta(Q,\underline{r})$ are nonisomorphic since we assume that the inequalities \eqref{eqn:numerical} are strict (see Remark~\ref{rem:strict}). Each bundle $\mathscr{W}_i$  is globally generated by Corollary~\ref{coro:pullback}, and the sequence $\underline{\mathscr{E}}$ is strongly exceptional by Theorem~\ref{thm:quivertilt}, hence it is weakly exceptional. Since $r_i=\rank(\mathscr{W}_i)$ for $i\in Q_0$, it remains to prove that the quiver of sections of $\underline{\mathscr{E}}$ is the quiver $Q$. The quiver of sections of $\underline{\mathscr{E}}$ contains $Q$ as a subquiver, because a defining property of the functor $\mathfrak{M}_\vartheta(Q,\underline{r})$ from \eqref{eqn:functor} associates a morphism $\mathscr{W}_{\tail(a)}\to\mathscr{W}_{\head(a)}$ to each arrow $a\in Q_1$. If the quiver of sections of $\underline{\mathscr{E}}$ contains an arrow $a\not\in Q_1$, then there is an irreducible morphism $f\colon\mathscr{W}_{i}\to\mathscr{W}_{j}$ that does not arise from a composition of morphisms from \eqref{eqn:functor}. However, the composition of $f$ with any morphism $\mathscr{O}_X\to \mathscr{W}_i$ defines a section of $\mathscr{W}_j$, and the isomorphism from Corollary~\ref{coro:pullback}\two\ implies that sections of $\mathscr{W}_j$ factor via bundles $\mathscr{W}_i$ with $i\neq j$ only when the induced morphism $\mathscr{W}_i\to\mathscr{W}_j$ decomposes as a composition of morphisms $\mathscr{W}_{\tail(a)}\to\mathscr{W}_{\head(a)}$ arising from arrows in $Q$. This completes the proof.
\end{proof}
 
 \begin{theorem}
 \label{thm:morphism}
 The multigraded linear series $\vert\underline{\mathscr{E}}\vert$ of any weakly exceptional sequence of globally generated vector bundles $\underline{\mathscr{E}}=(\mathscr{O}_X,\mathscr{E}_1,\dots, \mathscr{E}_\rho)$ on a projective scheme $X$ is nonempty. Moreover, if $Q$ denotes the quiver of sections of $\underline{\mathscr{E}}$ then there is
 \begin{enumerate}
 \item[\one] a morphism of schemes 
  \[
  \varphi_{\vert\underline{\mathscr{E}}\vert} \colon X\longrightarrow
 \vert\underline{\mathscr{E}}\vert
  \]
 satisfying $\varphi_{\vert\underline{\mathscr{E}}\vert}^*(\mathscr{W}_i)=\mathscr{E}_i$ for all $0\leq
 i\leq \rho$; and
 \item[\two] a homomorphism of $\kk$-algebras 
  \[
  \Psi_{\vert\underline{\mathscr{E}}\vert}\colon\kk Q\longrightarrow
 \End\Big(\bigoplus_{i\in Q_0} \mathscr{E}_i\Big)
  \]
 which is surjective if
 and only if each $\mathscr{E}_i$ is simple, i.e., $\End(\mathscr{E}_i)=\kk$ for $i\geq 0$.
\end{enumerate} 
\end{theorem}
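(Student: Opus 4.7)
The strategy is to invoke the moduli functor description of $\vert\underline{\mathscr{E}}\vert = \mathcal{M}_\vartheta(Q,\underline{r})$ recorded in~\eqref{eqn:functor}: to produce the morphism $\varphi_{\vert\underline{\mathscr{E}}\vert}$ satisfying $\varphi_{\vert\underline{\mathscr{E}}\vert}^*(\mathscr{W}_i)=\mathscr{E}_i$, it is enough to exhibit a $\vartheta$-stable representation of $Q$ in locally free sheaves on $X$ of dimension vector $\underline{r}$ whose component at vertex $i$ is $\mathscr{E}_i$. I begin by choosing, for each arrow $a\in Q_1$, a morphism $w_a\colon\mathscr{E}_{\tail(a)}\to\mathscr{E}_{\head(a)}$ lifting the basis element of the quotient $\Hom(\mathscr{E}_{\tail(a)},\mathscr{E}_{\head(a)})/V_{\tail(a),\head(a)}$ associated with $a$; together with $\mathscr{E}_0=\mathscr{O}_X$, this assembles the candidate representation.

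The technical heart of the proof is the following generation-by-paths claim, and it is the main obstacle: for every pair $m<i$ in $Q_0$, every morphism $f\in\Hom(\mathscr{E}_m,\mathscr{E}_i)$ is a $\kk$-linear combination of compositions $w_{a_\ell}\circ\cdots\circ w_{a_1}$ along directed paths $a_1\cdots a_\ell$ in $Q$ from $m$ to $i$. I would prove this by double induction on $(i,m)$, with outer induction on $i$ and inner induction on $m$ taken in decreasing order. The inductive step uses the defining decomposition $f = \sum c_b w_b + g$ from the quiver of sections, where $b$ ranges over arrows from $m$ to $i$ and $g\in V_{m,i}$ is a sum of terms $g'\circ h$ with $h\in\Hom(\mathscr{E}_m,\mathscr{E}_k)$ and $g'\in\Hom(\mathscr{E}_k,\mathscr{E}_i)$ for some $k\neq m,i$. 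Weak exceptionality forces $m<k<i$, so the outer hypothesis (applied to $(m,k)$ since $k<i$) expresses $h$ as a combination of arrow-compositions, and the inner hypothesis (applied to $(k,i)$ since $k>m$) expresses $g'$ similarly; composition produces the required combination of paths from $m$ to $i$. Acyclicity and finiteness of $Q$ guarantee termination.

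Granting the claim, its $m=0$ case states that every global section of $\mathscr{E}_i$ lies in the image of the map $\bigoplus_{\{a:\head(a)=i\}}H^0(X,\mathscr{E}_{\tail(a)})\to H^0(X,\mathscr{E}_i)$ induced by the $w_a$. Combined with global generation of $\mathscr{E}_i$, this implies that the natural sheaf map
\[
\phi_i\colon\bigoplus_{\{a\in Q_1\,:\,\head(a)=i\}}\mathscr{E}_{\tail(a)}\longrightarrow\mathscr{E}_i
\]
is surjective on fibres at every closed point, hence surjective as a map of sheaves for each $1\leq i\leq\rho$. Lemma~\ref{lem:generic} then certifies that each fibre representation is $\vartheta$-stable, so~\eqref{eqn:functor} delivers the morphism $\varphi_{\vert\underline{\mathscr{E}}\vert}\colon X\to\vert\underline{\mathscr{E}}\vert$ with $\varphi_{\vert\underline{\mathscr{E}}\vert}^*(\mathscr{W}_i)\cong\mathscr{E}_i$; in particular $\vert\underline{\mathscr{E}}\vert$ is nonempty, completing part~\one.

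For part~\two, I would define $\Psi_{\vert\underline{\mathscr{E}}\vert}$ on generators of $\kk Q$ by $e_i\mapsto\mathrm{id}_{\mathscr{E}_i}$ and $a\mapsto w_a$, extending multiplicatively via the path algebra structure. The image of $e_m(\kk Q)e_i$ inside the component $\Hom(\mathscr{E}_m,\mathscr{E}_i)$ of $\End(\bigoplus_{i\in Q_0}\mathscr{E}_i)$ is the span of compositions of chosen arrow morphisms along paths from $m$ to $i$: for $m>i$ both sides vanish (by acyclicity and by weak exceptionality respectively), while for $m<i$ the generation-by-paths claim forces this image to coincide with $\Hom(\mathscr{E}_m,\mathscr{E}_i)$. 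On the diagonal $m=i$, acyclicity of $Q$ gives $e_i(\kk Q)e_i=\kk e_i$ with image $\kk\cdot\mathrm{id}_{\mathscr{E}_i}$, which equals $\End(\mathscr{E}_i)$ exactly when $\mathscr{E}_i$ is simple. Assembling these observations yields the stated characterisation of surjectivity of $\Psi_{\vert\underline{\mathscr{E}}\vert}$.
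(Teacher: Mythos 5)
Your proposal is correct, and at the structural level it tracks the paper's argument: both produce a $\vartheta$-stable family on all of $X$ by exploiting that the quiver of sections encodes how morphisms $\mathscr{E}_j\to\mathscr{E}_i$ factor through arrow morphisms, then invoke Lemma~\ref{lem:generic} and the moduli functor \eqref{eqn:functor} to obtain $\varphi_{\vert\underline{\mathscr{E}}\vert}$, and both handle part \two\ via the description of $e_i(\kk Q)e_j$ and the fact that acyclicity forces $e_i(\kk Q)e_i=\kk e_i$. The genuine difference is in how the family on $X$ is constructed. The paper fixes a point $x\in X$, chooses sections $s_1,\dots,s_{r_\rho}\in H^0(\mathscr{E}_\rho)$ whose values span the fibre $E_\rho$, decomposes those global sections into arrow morphisms, and then iterates \emph{downward} through the vertices, at each stage adding further sections chosen to span the cokernel of the partially constructed map and decomposing them; it then asserts the construction extends over a Zariski-open subset containing $x$ and hence defines a global family. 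You instead fix, once and for all, lifts $w_a$ for the arrows and isolate the key technical ingredient as a global generation-by-paths lemma, proven by a clean double induction, which gives directly that every sheaf map $\phi_i$ is surjective on all of $X$. Your organization is tighter: it avoids the vertex-by-vertex bootstrap and any possible ambiguity in the paper's choice of the morphisms $f_a$ appearing in a decomposition (the paper does not pin these down, whereas your fixed lifts of representatives of $\Hom(\mathscr{E}_{\tail(a)},\mathscr{E}_{\head(a)})/V_{\tail(a),\head(a)}$ are canonical up to choice of basis), and it makes explicit that the family is $\vartheta$-stable over the whole of $X$ rather than only near the initial point. The generation-by-paths lemma you prove is also exactly what is needed to justify the surjectivity of $e_i(\kk Q)e_j\to\Hom(\mathscr{E}_i,\mathscr{E}_j)$ for $i<j$ in part \two, which the paper asserts somewhat briefly; your treatment makes this transparent.
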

\begin{proof}
 For $x\in X$ and $i\in Q_0$, write $E_i:=\mathscr{E}_i\vert_x$ for the fibre of $\mathscr{E}_i$.  Since $\mathscr{E}_i$ is globally generated, the restriction of the epimorphism $H^0(\mathscr{E}_i)\otimes \mathscr{O}_X\rightarrow \mathscr{E}_i$ to the fibre over $x$ gives a surjective map of $\kk$-vector spaces $H^0(\mathscr{E}_i)\to E_i$. The first step is to show that the vector space $\bigoplus_{i\in Q_0}E_i$ carries the structure of a $\vartheta$-stable representation of $Q$.  To begin, choose sections $s_1,\dots, s_{r_\rho}\in H^0(\mathscr{E}_\rho)$ whose values $s_1(x),\dots, s_{r_\rho}(x)$ span the fibre $E_\rho$.  For $1\leq \alpha\leq r_\rho$, decompose where possible the map $s_\alpha\colon \mathscr{O}_X\to \mathscr{E}_\rho$ via the bundles $\mathscr{E}_i$ for $i<\rho$. Since $Q$ is the quiver of sections of $\underline{\mathscr{E}}$, each map in the resulting decomposition is of the form $f_a\colon \mathscr{E}_{\tail(a)}\to \mathscr{E}_{\head(a)}$ for some $a\in Q_1$.  Evaluate each map at $x\in X$, and define
    \begin{equation}
 \label{eqn:fibremaps}
 f_{(i)}:=\oplus f_a(x) \colon \bigoplus_{\{a\in Q_1 : \head(a)=i\}} E_{\tail(a)}\longrightarrow E_i
  \end{equation}
  for $i\geq 1$. The map $f_{(\rho)}$ is surjective since $s_1(x),\dots, s_{r_\rho}(x)$ span $E_\rho$, giving a matrix of full rank $w_\rho\in \Mat(r_\rho\times s_\rho,\kk)\cong \Rep(Q,\underline{r})_\rho$. Let $i<\rho$ be the largest index for which \eqref{eqn:fibremaps} is not surjective. Choose sections $\sigma_1,\dots, \sigma_{m}\in H^0(\mathscr{E}_i)$ whose values $\sigma_1(x),\dots, \sigma_{m}(x)$ span the cokernel of \eqref{eqn:fibremaps}. Decompose the maps $\sigma_\beta\colon \mathscr{O}_X\to \mathscr{E}_i$ as above to obtain new maps of the form $f_a\colon \mathscr{E}_{\tail(a)}\to \mathscr{E}_{\head(a)}$ for some $a\in Q_1$. Evaluate each map at $x\in X$, and add the resulting $\kk$-linear maps to those in \eqref{eqn:fibremaps} to create a direct sum with more summands. The new map $f_{(i)}$ is by construction surjective, giving a matrix of full rank $w_i\in \Rep(Q,\underline{r})_i$. Repeat until $f_{(i)}$ is surjective for all $i\geq 1$. These maps define a $\vartheta$-stable point $(w_i)\in \Rep(Q,\underline{r})$ by Lemma~\ref{lem:generic}, so $\vert\underline{\mathscr{E}}\vert = \mathcal{M}_\vartheta(Q,\underline{r})$  is nonempty.

 The construction of the $\vartheta$-stable representation $\bigoplus_{i\in Q_0}E_i$ extends to all fibres over a Zariski-open subset containing $x\in X$, so the locally free sheaf $\bigoplus_{i\in Q_0}\mathscr{E}_i$ on $X$ carries the structure of a family of $\vartheta$-stable representations of $Q$ of dimension vector $\underline{r}$. The universal properties of the fine moduli space $\mathcal{M}_\vartheta(Q,\underline{r})$ induce both $\varphi_{\vert\underline{\mathscr{E}}\vert}$ and $\Psi_{\vert\underline{\mathscr{E}}\vert}$. It remains to prove the statement on the surjectivity of $\Psi_{\vert\underline{\mathscr{E}}\vert}$.  For $i, j\in Q_0$, the map $\Psi_{\underline{\mathscr{E}}}$ sends the subspace $e_i(\kk Q)e_j$ generated by paths with tail at $i$ and head at $j$ to $\Hom(\mathscr{E}_i,\mathscr{E}_j)$. Since $Q$ is the quiver of sections of $\underline{\mathscr{E}}$, the restriction of $\Psi_{\underline{\mathscr{E}}}$ to $e_i(\kk Q)e_j$ maps onto $\Hom(\mathscr{E}_i,\mathscr{E}_j)$ for $i\neq j$. Since $Q$ is acyclic, we have $e_i(\kk Q)e_i = \kk e_i$ for $i\in Q_0$. Since $\Psi_{\underline{\mathscr{E}}}$ sends the idempotent $e_i\in \kk Q$ to the identity map on $\mathscr{E}_i$,  it follows that $\Psi_{\underline{\mathscr{E}}}$ is surjective if and only if $\End(\mathscr{E}_i)=\kk$ for all $i\in Q_0$.
\end{proof}

 \begin{example}
   Let $\mathscr{E}_1$ be a bundle of rank $r$ on a projective scheme $X$ and set $\underline{\mathscr{E}}:= (\mathscr{O}_X,\mathscr{E}_1)$. The morphism  $\varphi_{\vert\underline{\mathscr{E}}\vert}$ from Theorem~\ref{thm:morphism} coincides with the morphism $\varphi_{\vert\underline{\mathscr{E}}\vert}\colon X\longrightarrow \Gr(H^0(\mathscr{E}_1),r)$ to the linear series of higher rank that recovers $\mathscr{E}_1$ as the pullback of the tautological quotient bundle of rank $r$. In particular, if $\mathscr{E}_1$ has rank one then we recover the morphism to the classical linear series $\vert\underline{\mathscr{E}}\vert\cong \mathbb{P}^*(H^0(\mathscr{E}_1))$.
 \end{example}
   
 \begin{remark}
   If $X$ is a projective toric variety and each $\mathscr{E}_i$ has rank one then  $\vert\underline{\mathscr{E}}\vert$ coincides with
   the multilinear series introduced by Craw--Smith~\cite{CrawSmith}, in which case, Theorem~\ref{thm:morphism} specialises to \cite[Proposition~3.3, Corollary~4.2]{CrawSmith}.
    \end{remark}
 
  \begin{proposition}
  \label{prop:closedimmersion}
 Let $\underline{\mathscr{E}}=(\mathscr{O}_X,\mathscr{E}_1,\dots, \mathscr{E}_\rho)$ be a sequence of globally generated line bundles on a projective scheme $X$ and set $\mathscr{E}:=\bigotimes_{1\leq i\leq \rho}\mathscr{E}_i$. Assume that the multiplication map 
 \[
 H^0(\mathscr{E}_1)\otimes \dots \otimes H^0(\mathscr{E}_\rho)\longrightarrow H^0(\mathscr{E})
\]
 is surjective. Then $\varphi_{\vert\underline{\mathscr{E}}\vert}\colon
 X \longrightarrow \vert\underline{\mathscr{E}}\vert$ is a closed immersion if and only if $\mathscr{E}$ is very ample.
  \end{proposition}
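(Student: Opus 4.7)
The plan is to compare $\varphi$ with the classical linear series embedding $\varphi_{\vert\mathscr{E}\vert}\colon X\to\mathbb{P}^*(H^0(X,\mathscr{E}))$ by routing both through the morphism $\varphi_{\vert L\vert}$ to projective space determined by the line bundle $L:=\mathscr{W}_1\otimes\cdots\otimes\mathscr{W}_\rho$. Since each $\mathscr{E}_i$ has rank one we have $\underline{r}=(1,\dots,1)$, so $\vert\underline{\mathscr{E}}\vert$ is a smooth projective toric quiver flag variety; by Lemma~\ref{lem:nefcone} the bundle $L$ coincides with the polarising ample line bundle $L(\vartheta)$ inherited from the GIT construction, and the classical result that ample line bundles on smooth projective toric varieties are very ample then forces $L$ to be very ample. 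Consequently $\varphi_{\vert L\vert}\colon\vert\underline{\mathscr{E}}\vert\hookrightarrow\mathbb{P}^*(H^0(\vert\underline{\mathscr{E}}\vert,L))$ is a closed immersion, and Theorem~\ref{thm:morphism} gives $\varphi^*L=\mathscr{E}$. The forward direction is immediate: if $\varphi$ is a closed immersion, then $\mathscr{E}=\varphi^*L$ is the pullback of a very ample bundle along a closed immersion, hence very ample.

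For the backward direction, assume $\mathscr{E}$ is very ample. The key step is to show that $\varphi^*\colon H^0(\vert\underline{\mathscr{E}}\vert,L)\to H^0(X,\mathscr{E})$ is surjective. For each vertex $i$, Corollary~\ref{coro:pullback}\two\ identifies $H^0(\vert\underline{\mathscr{E}}\vert,\mathscr{W}_i)$ with $e_0(\kk Q)e_i$, and because the universal homomorphism on $\vert\underline{\mathscr{E}}\vert$ pulls back to $\Psi_{\vert\underline{\mathscr{E}}\vert}$ from Theorem~\ref{thm:morphism}\two, this identification intertwines $\varphi^*$ with the restriction of $\Psi_{\vert\underline{\mathscr{E}}\vert}$; since $Q$ is the quiver of sections of $\underline{\mathscr{E}}$, this restriction surjects onto $H^0(X,\mathscr{E}_i)$. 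Thus the tensored pullback $\bigotimes_i H^0(\vert\underline{\mathscr{E}}\vert,\mathscr{W}_i)\twoheadrightarrow\bigotimes_i H^0(X,\mathscr{E}_i)$ is surjective, and composing with the multiplication $\bigotimes_i H^0(X,\mathscr{E}_i)\twoheadrightarrow H^0(X,\mathscr{E})$ (surjective by hypothesis) yields a surjection onto $H^0(X,\mathscr{E})$. Because pullback commutes with the multiplication of global sections of line bundles, this same composition factors through $H^0(\vert\underline{\mathscr{E}}\vert,L)\xrightarrow{\varphi^*}H^0(X,\mathscr{E})$, and the latter map is surjective. The induced linear closed immersion $\iota\colon\mathbb{P}^*(H^0(X,\mathscr{E}))\hookrightarrow\mathbb{P}^*(H^0(\vert\underline{\mathscr{E}}\vert,L))$ then identifies $\varphi_{\vert L\vert}\circ\varphi$ with $\iota\circ\varphi_{\vert\mathscr{E}\vert}$. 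Very ampleness of $\mathscr{E}$ renders $\varphi_{\vert\mathscr{E}\vert}$, and therefore the composition $\varphi_{\vert L\vert}\circ\varphi$, a closed immersion; since $X$ is proper and $\varphi_{\vert L\vert}$ is injective and unramified, the standard criterion that a proper injective unramified morphism is a closed immersion implies that $\varphi$ itself is a closed immersion.

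The main obstacle is the diagram chase establishing surjectivity of $\varphi^*$ on $H^0(L)$: one must carefully track the compatibility of pullback with both the identification from Corollary~\ref{coro:pullback}\two, the homomorphism $\Psi_{\vert\underline{\mathscr{E}}\vert}$ from Theorem~\ref{thm:morphism}\two, and the natural multiplication of global sections. The essential toric input that ample $\Leftrightarrow$ very ample on a smooth projective toric variety is classical and available from standard references, but should be cited explicitly.
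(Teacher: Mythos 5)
Your proposal is correct and follows essentially the same route as the paper: both proofs first use the toric fact that the ample polarisation $L=\bigotimes_i\mathscr{W}_i$ on the smooth toric variety $\vert\underline{\mathscr{E}}\vert$ is very ample, then deduce surjectivity of the pullback on $H^0(L)$ from the surjectivity of the $g_j$ in \eqref{eqn:surjectiongj} together with the hypothesis on the multiplication map, and finally chase a commutative square of classical linear-series morphisms to reduce the closed-immersion question to the very ampleness of $\mathscr{E}$. The only superficial difference is that you verbalise the final step via ``proper monomorphism is a closed immersion,'' whereas the paper reads the equivalence directly off the square.
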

 \begin{proof}
 The sequence $\underline{\mathscr{E}}$ is weakly exceptional since $X$ is projective. Let $\mathscr{W}_0,\mathscr{W}_1,\dots,\mathscr{W}_\rho$ denote the tautological line bundles on $\vert\underline{\mathscr{E}}\vert$. As a first step we construct a commutative diagram 
   \[
   \begin{CD}
   H^0(\mathscr{W}_1)\otimes \dots \otimes H^0(\mathscr{W}_\rho)@>>> H^0\Big(\!\textstyle{\bigotimes_{1\leq i\leq \rho} \mathscr{W}_i}\Big) \\
    @V{g}VV  @VV{h}V \\
   H^0(\mathscr{E}_1)\otimes \dots \otimes H^0(\mathscr{E}_\rho) @>>>   H^0(\mathscr{E})
   \end{CD}
 \] 
  where the horizontal maps are given by multiplication. The proof of Theorem~\ref{thm:morphism} shows that the sheaf $\bigoplus_{0\leq i\leq \rho}\mathscr{E}_i$ carries the structure of a $\vartheta$-stable representation of the quiver of sections $Q$ and, moreover, the assignment sending $a\in Q$ to the corresponding morphism $f_a \colon \mathscr{E}_{\tail(a)}\to\mathscr{E}_{\head(a)}$ extends to a surjective map of $\kk$-vector spaces $e_{i}(\kk Q)e_{j}\to \Hom(\mathscr{E}_{i},\mathscr{E}_{j})$ for $i<j$. For $i=0$, the composition of this map with the isomorphism from Corollary~\ref{coro:pullback} gives a surjective map 
 \begin{equation}
 \label{eqn:surjectiongj}
 g_j\colon H^0(\mathscr{W}_j)\longrightarrow H^0(\mathscr{E}_j) \text{ for }j\geq 1.
 \end{equation}
The tensor product of all such maps $g:=\otimes_j g_j$ provides one vertical map in the diagram. More generally, for any $\theta=(\theta_1,\dots,\theta_\rho)\in \ZZ^\rho$, identify $H^0(\mathscr{W}_1^{\theta_1}\otimes\dots\otimes\mathscr{W}_\rho^{\theta_\rho})$ with the subspace of the Cox ring $\kk[y_a : a\in Q]$ of the toric variety $\vert\underline{\mathscr{E}}\vert$ spanned by monomials of degree $\theta$, and define
\begin{equation}
\label{eqn:gtheta}
g_\theta\colon H^0\big(\mathscr{W}_1^{\theta_1}\otimes\dots\otimes\mathscr{W}_\rho^{\theta_\rho}\big)\longrightarrow H^0(\mathscr{E}_1^{\theta_1}\otimes\dots\otimes\mathscr{E}_\rho^{\theta_\rho})
\end{equation}
by sending the monomial $\prod_{a\in Q_1}y_a^{m_a}$ to the section $\prod_{a\in Q_1}f_a^{m_a}$, where for each $m_a\in \NN$ we have $f_a^{m_a}\in H^0(\mathscr{E}_{\head(a)}^{m_a}\otimes\mathscr{E}_{\tail(a)}^{-m_a})$.  If we define the right hand vertical map of the diagram to be $h:=g_\theta$ for $\theta=(1,\dots,1)$, then the diagram commutes. The left-hand vertical map is surjective, and the lower horizontal map is surjective by assumption, so the right hand vertical map must also be surjective. The induced closed immersion of projective spaces provides the right hand vertical map in the following commutative diagram of projective varieties
   \[
   \begin{CD}
   \vert\underline{\mathscr{E}}\vert @>>> \mathbb{P}^*\Big(H^0\big(\bigotimes_{1\leq i\leq \rho} \mathscr{W}_i\big)\Big)\\
    @A{\varphi_{\vert\underline{\mathscr{E}}\vert}}AA  @AAA \\
   X @>>>  \mathbb{P}^*\big(H^0(\mathscr{E})\big) 
   \end{CD}
 \] 
  where the horizontal maps are the morphisms to the classical linear series. Since $\vert\underline{\mathscr{E}}\vert$ is a smooth toric variety, the ample bundle $\bigotimes_{1\leq i\leq \rho} \mathscr{W}_i$ is very ample, so the morphism on the top row is a closed immersion. It follows that $\varphi_{\vert\underline{\mathscr{E}}\vert}$ is a closed immersion if and only if the lower horizontal morphism is a closed immersion, which holds if and only if $\mathscr{E}$ is very ample.
 \end{proof}

\begin{example}
For $\underline{\mathscr{E}}=(\mathscr{O}_X,\mathscr{E}_1)$ with $\rank(\mathscr{E}_1)=1$, the morphism $\varphi_{\vert\underline{\mathscr{E}}\vert}$ is a closed immersion if and only if $\mathscr{E}_1$ is very ample.
\end{example}
 
 \begin{example}
 \label{exa:F2}
 The Hirzebruch surface $\mathbb{F}_2 =
 \mathbb{P}_{\mathbb{P}^1}\big(\mathscr{O}\oplus \mathscr{O}(2)\big)$
 is a toric variety with fan $\Sigma$ shown in Figure~\ref{fig:F2}(a)
 and Cox ring $\kk[x_1, x_2, x_3, x_4]$. Let $\mathscr{E}_1$ and $\mathscr{E}_2$ denote
 the line bundles on $\mathbb{F}_2$ that induce the morphisms to $\mathbb{P}^1$
 and $\mathbb{P}(1,1,2)$ respectively, and write
 $\underline{\mathscr{E}}:=(\mathscr{O}_{\mathbb{F}_2},\mathscr{E}_1,\mathscr{E}_2)$.  Since $\mathbb{F}_2$ is toric, the
 quiver of sections $Q$ can be described simply in terms of the
 torus-invariant bases for the spaces $\Hom(\mathscr{E}_i,\mathscr{E}_j)$ or, equivalently, monomials from the Cox ring. The resulting quiver, illustrated  in Figure~\ref{fig:F2}(b) 
with monomials indicating bases for $\Hom$-spaces, coincides with that from
 Figure~\ref{fig:flag}(b), so the multigraded linear series is
 $\vert\underline{\mathscr{E}}\vert =
 \mathbb{P}_{\mathbb{P}^1}\big(\mathscr{O}\oplus\mathscr{O}(1)\oplus\mathscr{O}(1)\big)$.
 In this case, $\Psi_{\underline{\mathscr{E}}}$ is
 surjective with kernel the two-sided ideal $\langle
 a_1a_5-a_2a_4\rangle$, and $\varphi_{\vert\underline{\mathscr{E}}\vert} \colon \mathbb{F}_2\to
 \vert\underline{\mathscr{E}}\vert$ is a closed immersion with image $\mathbb{V}(y_1y_5-y_2y_4)\git_\vartheta G$ (compare \cite[Example~4.12]{CrawSmith}).
 \begin{figure}[!ht]
    \centering
    \mbox{
   \subfigure[]{
        \psset{unit=1cm}
        \begin{pspicture}(-0.5,-0.1)(2.5,3.0)
          \pspolygon[linecolor=white,fillstyle=hlines*,
          hatchangle=25](1,1)(1,3)(2,3)(2,1)
          \pspolygon[linecolor=white,fillstyle=hlines*,
          hatchangle=5](1,1)(1,3)(0,3)
          \pspolygon[linecolor=white,fillstyle=hlines*,
          hatchangle=55](1,1)(0,3)(0,0)(1,0)
          \pspolygon[linecolor=white,fillstyle=hlines*,
          hatchangle=165](1,1)(2,1)(2,0)(1,0)
          \rput(2,0.7){\psframebox*{$1$}}
          \rput(1.3,2){\psframebox*{$2$}}
          \rput(-0.3,3){\psframebox*{$3$}}
          \rput(0.7,0){\psframebox*{$4$}}
          \cnode*[fillcolor=black](0,0){3pt}{P1}
          \cnode*[fillcolor=black](1,0){3pt}{P2}
          \cnode*[fillcolor=black](2,0){3pt}{P3}
          \cnode*[fillcolor=black](0,1){3pt}{P4}
          \cnode*[fillcolor=black](1,1){3pt}{P5}
          \cnode*[fillcolor=black](2,1){3pt}{P6}
          \cnode*[fillcolor=black](0,2){3pt}{P7}
          \cnode*[fillcolor=black](1,2){3pt}{P8}
          \cnode*[fillcolor=black](2,2){3pt}{P9}
          \cnode*[fillcolor=black](0,3){3pt}{P10}
          \cnode*[fillcolor=black](1,3){3pt}{P11}
          \cnode*[fillcolor=black](2,3){3pt}{P12}
          \ncline[linewidth=2pt]{->}{P5}{P10}
          \ncline{-}{P5}{P11}
          \ncline[linewidth=2pt]{->}{P5}{P6}
          \ncline[linewidth=2pt]{->}{P5}{P2}
          \ncline[linewidth=2pt]{->}{P5}{P8}
        \end{pspicture}}
      \qquad \qquad
      \subfigure[]{
        \psset{unit=1cm}
        \begin{pspicture}(-0.5,-0.6)(3,2)
          \cnodeput(0.5,0){A}{1}
          \cnodeput(2.5,0){B}{1} 
          \cnodeput(0.5,2){C}{1}
          \psset{nodesep=0pt}
          \ncline[offset=5pt]{->}{A}{B}\lput*{:U}{\footnotesize{$x_1$}}
          \ncline[offset=-5pt]{->}{A}{B}\lput*{:U}{\footnotesize{$x_3$}}
          \ncline{->}{A}{C} \lput*{:270}{\footnotesize{$x_4$}}
          \ncline[offset=5pt]{<-}{C}{B}\lput*{:0}{\footnotesize{$x_2x_3$}}
          \ncline[offset=5pt]{->}{B}{C}\lput*{:180}{\footnotesize{$x_1x_2$}}
      \end{pspicture}}
    }
    \caption{(a) the fan of $\mathbb{F}_2$; (b) a quiver of sections on $\mathbb{F}_2$\label{fig:F2}}
  \end{figure}
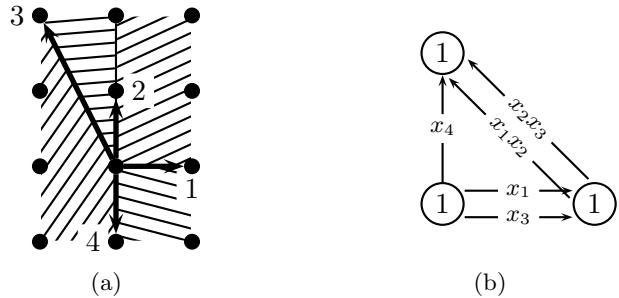
 \end{example}
 
 \begin{remark}
   As Example~\ref{exa:F2} illustrates, the homomorphism $\Psi_{\underline{\mathscr{E}}}$ is
   rarely injective because the maps $\mathscr{E}_i\to \mathscr{E}_j$ that determine the
   arrows in $Q$ may factor in more than one way through bundles $\mathscr{E}_k$
   with $i<k<j$. One expects nevertheless that the kernel plays an
   important geometric role, at least when each $\mathscr{E}_i$ is simple. For example, if
   $X$ is a toric variety and if each $\mathscr{E}_i$ has rank one, the main
   result of Craw--Smith~\cite{CrawSmith} states that, after augmenting any list of globally generated line bundles by up to two additional bundles, the 
   two-sided ideal $\Ker(\Psi_{\underline{\mathscr{E}}})$ in the noncommutative algebra $\kk
   Q$ determines explicitly the image of the
   morphism $\varphi_{\vert\underline{\mathscr{E}}\vert} \colon X\to
   \vert\underline{\mathscr{E}}\vert$, in which case $X$ is the
   fine moduli space of $\vartheta$-stable 
   $\End\big(\bigoplus_{0\leq i\leq \rho} \mathscr{E}_i\big)$-modules.
  \end{remark}

\section{Multigraded Pl\"{u}cker embedding and the Cox ring}
As an application of multigraded linear series we present a natural
 ambient space for quiver flag varieties, generalising the Pl\"{u}cker
 embedding of a Grassmannian.   
 
 For any quiver flag variety $\mathcal{M}_\vartheta=\mathcal{M}_\vartheta(Q,\underline{r})$, the determinants $\det(\mathscr{W}_1),\dots,
 \det(\mathscr{W}_\rho)$ of the nontrival tautological bundles are distinct, globally generated line bundles, and the sequence 
 \begin{equation}
 \label{eqn:detbundles}
 \underline{\det}(\mathscr{W}):=  \big(\mathscr{O}_{\mathcal{M}_\vartheta},
 \det(\mathscr{W}_1),\dots, \det(\mathscr{W}_\rho)\big)
 \end{equation}
 defines a quiver of sections $Q^\prime$. For
 $\underline{r}^\prime = (1,\dots, 1)$ and $\vartheta^\prime =
 (-\rho,1,\dots, 1)$, the multigraded linear series
 $\vert\underline{\det}(\mathscr{W})\vert =
 \mathcal{M}_{\vartheta^\prime}(Q^\prime,\underline{r}^\prime)$ is a
 smooth projective toric variety that carries tautological line bundles $\mathscr{O}_{\vert\underline{\det}(\mathscr{W})\vert}, \mathscr{W}^\prime_1,\dots,\mathscr{W}^\prime_\rho$.  Theorem~\ref{thm:morphism} constructs a morphism 
  \[
  \varphi_{\vert\underline{\det}(\mathscr{W})\vert}\colon
 \mathcal{M}_\vartheta \longrightarrow \vert\underline{\det}(\mathscr{W})\vert
 \]
 satisfying
 $\varphi_{\vert\underline{\det}(\mathscr{W})\vert}^*(\mathscr{W}^\prime_i)=\det(\mathscr{W}_i)$
 for $i\geq 0$. We call this the \emph{multigraded Pl\"{u}cker embedding}. To justify the terminology we establish the following result.

 \begin{proposition}
 \label{prop:Pluckerembedding}
 The multigraded Pl\"{u}cker morphism $\varphi_{\vert\underline{\det}(\mathscr{W})\vert}$ is a closed immersion.
 \end{proposition}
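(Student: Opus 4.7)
The strategy is to apply Proposition~\ref{prop:closedimmersion} to the sequence $\underline{\det}(\mathscr{W})$ on the projective variety $\mathcal{M}_\vartheta$. Each $\det(\mathscr{W}_i)$ is a globally generated line bundle by Lemma~\ref{lem:nefcone}, so two things remain to verify: surjectivity of the multiplication map $\bigotimes_i H^0(\det(\mathscr{W}_i))\to H^0\bigl(\bigotimes_i \det(\mathscr{W}_i)\bigr)$, and very ampleness of $\mathscr{E}:=\bigotimes_i \det(\mathscr{W}_i)$.

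Under the isomorphism $L$ from~\eqref{eqn:HuKeel}, the product $\mathscr{E}$ coincides with the polarising ample line bundle $L(\vartheta)$ of the GIT quotient $\mathcal{M}_\vartheta=\Rep(Q,\underline{r})\git_\vartheta G$. By GIT, $H^0(\mathcal{M}_\vartheta, L(\vartheta))$ is the space of semi-invariants $\kk[\Rep(Q,\underline{r})]^{G,\vartheta}$, which a direct $G$-weight calculation reveals is spanned by products $\prod_{i=1}^{\rho}m_{I_i}(i)$ of the $r_i\times r_i$ minors from Proposition~\ref{prop:finemoduli}; similarly $H^0(\mathcal{M}_\vartheta, \det\mathscr{W}_i)$ is spanned by the minors $m_I(i)$. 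Surjectivity of multiplication is then immediate, since each semi-invariant of weight $\vartheta$ factors as a product of semi-invariants corresponding to the individual $\det(\mathscr{W}_i)$.

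The main obstacle is establishing very ampleness of $L(\vartheta)$. Here I would proceed by direct analysis of the GIT data. The sections $\prod_i m_{I_i}(i)$ have no common zero on the $\vartheta$-stable locus by Proposition~\ref{prop:finemoduli}\two, so they determine a morphism $\psi\colon \mathcal{M}_\vartheta\to\mathbb{P}^N$ realising the complete linear system of $L(\vartheta)$. On the affine chart $U\subset \mathcal{M}_\vartheta$ where a fixed product $\prod_i m_{I_i}(i)$ is nonzero, every $G$-orbit has a unique representative whose matrix $w_i$ is in reduced row-echelon form with pivot columns indexed by $I_i$, and Cramer's rule expresses the free entries of these matrices as ratios $m_{J}(i)/m_{I_i}(i)$. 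These ratios appear as affine coordinates $\prod_k m_{K_k}(k)/\prod_k m_{I_k}(k)$ on the corresponding chart of $\mathbb{P}^N$, obtained by taking $K_k=I_k$ for all $k\neq i$ and $K_i=J$. Hence $\psi\vert_U$ is a closed embedding into this affine chart of $\mathbb{P}^N$, and since such charts cover $\mathcal{M}_\vartheta$ by the smoothness argument in Proposition~\ref{prop:finemoduli}, $\psi$ is a closed immersion.

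With $L(\vartheta)$ very ample and the multiplication map surjective, Proposition~\ref{prop:closedimmersion} yields that $\varphi_{\vert\underline{\det}(\mathscr{W})\vert}$ is a closed immersion.
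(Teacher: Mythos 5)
Your strategy---applying Proposition~\ref{prop:closedimmersion} to $\underline{\det}(\mathscr{W})$ on $X=\mathcal{M}_\vartheta$---is genuinely different from the paper's proof, which instead runs an induction along the tower of Grassmann bundles from Theorem~\ref{thm:tower}, fitting a relative Pl\"{u}cker embedding over $Y_{\rho-1}$ into the commutative diagram \eqref{eqn:Pluckerdiagram}. The trouble is that your route has a serious gap at the very first step.

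The claim that $H^0(\mathcal{M}_\vartheta,\det\mathscr{W}_i)$ is ``spanned by the minors $m_I(i)$'' is false in general. The minor $m_I(i)$ of the matrix $w_i\in\Mat(r_i\times s_i,\kk)$ is \emph{not} a semi-invariant of $G$ unless every arrow into $i$ has tail at the source vertex $0$: the subgroup $\prod_{\{a:\head(a)=i\}}\GL(r_{\tail(a)})\subset G$ acts on the column blocks of $w_i$, and a generic $r_i\times r_i$ minor does not transform by any single character under this action. Consequently $m_I(i)$ does not descend to a section of $\det(\mathscr{W}_i)$. For instance, on the flag variety quiver (Example~\ref{exa:flag}) the matrix $w_i$ is $r_i\times r_{i-1}$; its minors transform nontrivially under $\GL(r_{i-1})$, while the actual sections of $\det(\mathscr{W}_i)$ are the $\binom{n}{r_i}$ Pl\"{u}cker coordinates, i.e.\ minors of the \emph{composed} map $w_i\cdots w_1\colon\kk^n\to\kk^{r_i}$. (This also matches the count $h^0(\det\mathscr{W}_i)=\binom{n}{r_i}$ used in the paper's flag variety example following Proposition~\ref{prop:Pluckerembedding}.) Once the identification of sections with minors of $w_i$ is dropped, both the Cramer's rule argument for very ampleness of $L(\vartheta)$ and the factorisation argument for surjectivity of multiplication collapse.

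There is a second, structural problem: even granting very ampleness of $L(\vartheta)$, the hypothesis of Proposition~\ref{prop:closedimmersion} requires surjectivity of the multiplication map $\bigotimes_i H^0(\det\mathscr{W}_i)\to H^0(\bigotimes_i\det\mathscr{W}_i)$. This is precisely the degree-$(1,\dots,1)$ piece of the map \eqref{eqn:Coxgens}, and the paper explicitly notes---just before stating Conjecture~\ref{conj:Plucker}---that the maps $g_\theta$ ``need not a priori be surjective'' except in degrees equal to the standard basis vectors. So you would be assuming (a special case of) an open conjecture of the paper as an ingredient. The paper's inductive argument through the tower avoids both pitfalls: it never needs an explicit description of $H^0(\det\mathscr{W}_i)$, it establishes very ampleness of $L(\vartheta)$ as a \emph{consequence} rather than an input, and it reduces the closed-immersion claim to the classical relative Pl\"{u}cker embedding $\Gr(\mathscr{F}_\rho,r_\rho)\hookrightarrow\mathbb{P}^*(\bigwedge^{r_\rho}\mathscr{F}_\rho)$ together with the inductive hypothesis on $Y_{\rho-1}$.
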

 \begin{proof}
 We proceed by induction using the tower \eqref{eqn:tower}. The case $\rho=1$ gives $\mathcal{M}_\vartheta\cong \Gr(\kk^{s_1},r_1)$. The quiver of sections $Q^\prime$, which has $Q^\prime_0=\{0,1\}$ and arrow set with $h^0(\det(\mathscr{W}_1))=\binom{s_1}{r_1}$ arrows from 0 to 1, defines the classical linear series $\vert\underline{\det}(\mathscr{W})\vert\cong\mathbb{P}^*(\bigwedge^{r_1}\kk^{s_1})$ and the multigraded morphism $\varphi_{\vert\underline{\det}(\mathscr{W})\vert}$ is the Pl\"{u}cker embedding.   Assume that the result holds for $Y_{\rho-1}$ and relabel the tautological bundles on $Y_{\rho-1}$ as $\mathscr{V}_i:= \mathscr{W}^{(\rho-1)}_i$ for $i\leq \rho-1$.  The sequence 
 \[
 \underline{\det}(\mathscr{V}):=  \big(\mathscr{O}_{Y_{\rho-1}},
 \det(\mathscr{V}_1),\dots, \det(\mathscr{V}_{\rho-1})\big)
 \] defines the closed immersion $\varphi_{\vert\underline{\det}(\mathscr{V})\vert}\colon
 Y_{\rho-1}\rightarrow \vert\underline{\det}(\mathscr{V})\vert$ with $\varphi_{\vert\underline{\det}(\mathscr{V})\vert}^*(\mathscr{V}^\prime_i)=\det(\mathscr{V}_i)$ for each tautological bundle $\mathscr{V}^\prime_i$ on $\vert\underline{\det}(\mathscr{V})\vert$.  Applying Theorem~\ref{thm:tower} gives $\vert\underline{\det}(\mathscr{W})\vert \cong\mathbb{P}^*(\mathscr{F}^\prime_\rho)$ for the bundle $\mathscr{F}_\rho^\prime = \bigoplus_{\{a'\in Q_1^\prime : \head(a^\prime)=\rho\}} \mathscr{V}^\prime_{\tail(a^\prime)}$ on $\vert\underline{\det}(\mathscr{V})\vert$.  We now construct a commutative diagram
   \begin{equation}
   \label{eqn:Pluckerdiagram}
   \begin{CD}
   \mathcal{M}_\vartheta\cong\Gr(\mathscr{F}_\rho,r_\rho) @>>> \mathbb{P}^*\big(\bigwedge^{r_\rho}\mathscr{F}_\rho\big)@>>> \mathbb{P}^*\big(\mathscr{G}_\rho\big) @>>> \vert\underline{\det}(\mathscr{W})\vert \cong \mathbb{P}^*(\mathscr{F}^\prime_\rho)\\
    @V{\pi}VV  @VVV @VVV @VV{\pi^\prime}V\\
   Y_{\rho-1} @=  Y_{\rho-1} @=  Y_{\rho-1} @>{\varphi_{\vert\underline{\det}(\mathscr{V})\vert}}>>\vert\underline{\det}(\mathscr{V})\vert
   \end{CD}
 \end{equation}
 for $\mathscr{G}_\rho:=\bigoplus_{\{a^\prime\in Q_1^\prime: \head(a^\prime = \rho\}} \det(\mathscr{V}_{\tail(a^\prime)})$, where each morphism on the top row is a closed immersion. The left hand square is the relative Pl\"{u}cker morphism $\Gr(\mathscr{F}_\rho,r_\rho)\to \mathbb{P}^*(\bigwedge^{r_\rho}\mathscr{F}_\rho)$ over $Y_{\rho-1}$, so the left horizontal map is a closed immersion by \cite[Proposition~9.8.4]{EGA}. The right hand square is the base change to $Y_{\rho-1}$ of the projective bundle $\pi^\prime$, and the closed immersion on the top row is induced by the epimorphism $\mathscr{F}_\rho^\prime \to \varphi_{\vert\underline{\det}(\mathscr{V})\vert}^*(\mathscr{F}^\prime_\rho)=\mathscr{G}_\rho$. To construct the closed immersion in the central square, we define an epimorphism $\mathscr{G}_\rho\to \bigwedge^{r_\rho}\mathscr{F}_\rho$.   
 
 The epimorphism $H^0(\mathscr{W}_\rho)\otimes\mathscr{O}\to\mathscr{W}_\rho$ constructed in Corollary~\ref{coro:pullback} factors via epimorphisms
  \[
 H^0(\mathscr{W}_\rho)\otimes\mathscr{O} \to\cdots \to  \bigoplus_{j\leq i}\mathscr{W}_j^{\oplus d_{i,j}}\to \cdots \to\pi^*(\mathscr{F}_\rho)\to \mathscr{W}_\rho,
  \]
 for $\pi^*(\mathscr{F}_\rho):= \bigoplus_{\{a\in Q_1 : \head(a)=\rho\}} \mathscr{W}_{\tail(a)}$, where $d_{i,j}$ is the number of paths in $Q$ with tail at $j$, head at $\rho$ that do not factor via any $k\in Q_0$ satisfying $j<k\leq i$. Applying $\bigwedge^{r_\rho}(-)$ gives epimorphisms
 \[
 H^0\big(\!\det(\mathscr{W}_\rho)\big)\otimes\mathscr{O} \rightarrow \cdots \to\bigwedge^{r_\rho} \bigoplus_{j\leq i}\mathscr{W}_j^{\oplus d_{i,j}} \to\cdots \to \bigwedge^{r_\rho} \pi^*(\mathscr{F}_\rho)\to \det(\mathscr{W}_\rho)
 \]
 When $\det(\mathscr{W}_j)$ occurs as a summand of a term of this sequence, the corresponding section of $\det(\mathscr{W}_\rho)$ factors via $\det(\mathscr{W}_j)$.  Since $Q^\prime$ is the quiver of sections of $\underline{\det}(\mathscr{W})$,  arrows $a^\prime\in Q_1^\prime$ with $\head(a^\prime)=\rho$ exist precisely when sections of $\det(\mathscr{W}_\rho)$ which factor via $\det(\mathscr{W}_{\tail(a^\prime)})$, so the direct sum of all such bundles $\pi^*(\mathscr{G}_\rho)=\bigoplus_{\{a'\in Q_1^\prime : \head(a^\prime)=\rho\}} \det(\mathscr{W}_{\tail(a^\prime)})$ fits in to a sequence of epimorphisms
 \[
 H^0\big(\!\det(\mathscr{W}_\rho)\big)\otimes\mathscr{O} \to\pi^*(\mathscr{G}_\rho)\to \textstyle{\pi^*\big(\bigwedge^{r_\rho}\mathscr{F}_\rho\big)}\to \det(\mathscr{W}_\rho).
 \]
 The second map is the pullback to $\mathcal{M}_\vartheta$ of an  epimorphism $\mathscr{G}_\rho\to \bigwedge^{r_\rho}\mathscr{F}_\rho$ on $Y_{\rho-1}$, so the morphism $\tau\colon \mathcal{M}_\vartheta\to\vert\underline{\det}(\mathscr{W})\vert$ on the top row of \eqref{eqn:Pluckerdiagram} is a closed immersion. 
 It remains to show that $\tau = \varphi_{\vert\underline{\det}(\mathscr{W})\vert}$.  Induction and commutativity of \eqref{eqn:Pluckerdiagram} give $\tau^*(\mathscr{W}^\prime_i)=\det(\mathscr{W}_i)$ for $i<\rho$. The bundle $\mathscr{W}^\prime_\rho = \mathscr{O}_{\mathbb{P}^*(\mathscr{F}^\prime_\rho)}(1)$ pulls back to $\mathscr{O}_{\mathbb{P}^*(\bigwedge^{r_\rho}\mathscr{F}_\rho)}(1)$ by \cite[(4.1.2.1)]{EGA2}, which in turn pulls back via the relative Pl\"{u}cker morphism to $\det(\mathscr{W}_\rho)$, giving $\tau^*(\mathscr{W}^\prime_\rho)=\det(\mathscr{W}_\rho)$ as required.
 \end{proof}

\begin{example}
Consider the quiver $Q$ from Figure~\ref{fig:flag}(a) that defines $\mathcal{M}_\vartheta\cong \Fl(n; r_1,\dots,r_\rho)$. For each $1\leq i\leq \rho$, none of the $\binom{n}{r_i}$
 sections $\mathscr{O}_{\mathcal{M}_\vartheta}\to \det(\mathscr{W}_i)$ factors via $\det(\mathscr{W}_j)$ with $j\neq i$, so the quiver of sections $Q^\prime$ has vertex set $Q^\prime_0=\{0,1,\dots\rho\}$ and arrow set with $h^0(\det(\mathscr{W}_i))=\binom{n}{r_i}$ arrows from 0 to $i$. The multigraded linear series is the product $\vert\underline{\det}(\mathscr{W})\vert\cong\prod_{i=1}^{\rho} \mathbb{P}^*(\bigwedge^{r_i}\kk^{n})$, and the closed immersion $\varphi_{\vert\underline{\det}(\mathscr{W})\vert}\colon\mathcal{M}_\vartheta \longrightarrow \prod_{i=1}^{\rho}\mathbb{P}^*(\bigwedge^{r_i}\kk^{n})$ is classical; see Fulton~\cite[\S 9.1]{Fulton}. 
\end{example}
  
  \begin{example}
For the quiver $Q$ from Figure~\ref{fig:Plucker}(a) and dimension vector $\underline{r}=(1,2,2)$,  the quiver flag variety $\mathcal{M}_\vartheta\cong \Gr(\mathscr{O}\oplus\mathscr{W}_1^{\oplus 2},2)$ is a $\Gr(\kk^5,2)$-bundle over $Y_1=\Gr(\kk^4,2)$.  Lemma~\ref{lem:tautologicalmultigraded} shows that $Q$ is the quiver of sections of $(\mathscr{O},\mathscr{W}_1,\mathscr{W}_2)$. Counting paths in $Q$ and applying Corollary~\ref{coro:pullback} gives epimorphisms $\mathscr{O}^{\oplus 4}\rightarrow \mathscr{W}_1$ and $\mathscr{O}^{\oplus 9}\rightarrow \mathscr{W}_2$, where the latter factors through $\mathscr{F}_2=\mathscr{O}\oplus\mathscr{W}_1^{\oplus 2}$. 
\begin{figure}[!ht]
    \centering
    \mbox{
   \subfigure[]{
        \psset{unit=1cm}
        \begin{pspicture}(0,-0.5)(3,2.4)
       \cnodeput(0.5,0){A}{1}
          \cnodeput(2.5,0){B}{2} 
          \cnodeput(0.5,2){C}{2}
          \psset{nodesep=0pt}
          \ncline[offset=4.5pt]{->}{A}{B}
          \ncline[offset=-4.5pt]{->}{A}{B}
           \ncline[offset=1.5pt]{->}{A}{B}
          \ncline[offset=-1.5pt]{->}{A}{B}
          \ncline{->}{A}{C}
          \ncline[offset=2pt]{<-}{C}{B}
          \ncline[offset=2pt]{->}{B}{C}
        \end{pspicture}}
      \qquad \qquad
      \subfigure[]{
        \psset{unit=1cm}
        \begin{pspicture}(0,-0.5)(3,2)
          \cnodeput(0.5,0){A}{1}
          \cnodeput(2.5,0){B}{1} 
          \cnodeput(0.5,2){C}{1}
          \psset{nodesep=0pt}
          \ncline{->}{A}{B}\lput*{:U}{\footnotesize{$6$}}
          \ncline{->}{A}{C} \lput*{:270}{\footnotesize{24}}
          \ncline{->}{B}{C}\lput*{:225}{\footnotesize{2}}
      \end{pspicture}}
    }
    \caption{(a) The pair $(Q,\underline{r})$ giving $\mathcal{M}_\vartheta$; (b) the pair $(Q^\prime,\underline{r}^\prime)$ giving $\vert\underline{\det}(\mathscr{W})\vert$. In (b), the numbers labeling arrows indicate the number of arrows in $Q^\prime$.\label{fig:Plucker}}
  \end{figure}
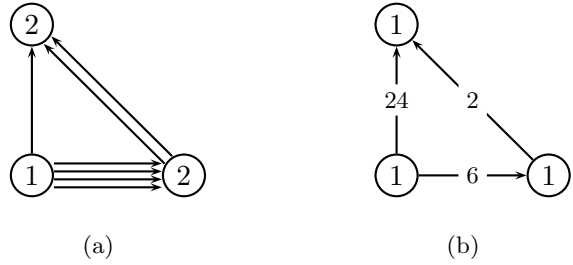
  Both $\mathscr{W}_1$ and $\mathscr{W}_2$ have rank 2, so there exist epimorphisms $\mathscr{O}^{\oplus 6}\rightarrow \det(\mathscr{W}_1)$ and $\mathscr{O}^{\oplus 36}\rightarrow \det(\mathscr{W}_2)$, where the latter factors through $\bigwedge^2\mathscr{F}_2=\det(\mathscr{W}_1)^{\oplus 2}\oplus\mathscr{W}_1^{\oplus 2}\oplus (\mathscr{W}_1\otimes\mathscr{W}_1)$. Since $h^0(\det(\mathscr{W}_1))=6$, exactly 12 of the 36 sections of $\det(\mathscr{W}_2)$ decompose via $\det(\mathscr{W}_1)$, giving the quiver of sections $Q^\prime$ from Figure~\ref{fig:Plucker}(b). Theorem~\ref{thm:tower} gives $\vert\underline{\det}(\mathscr{W})\vert=\mathbb{P}_{\mathbb{P}^5}(\mathscr{O}^{\oplus 24}\oplus \mathscr{O}(1)^{\oplus 2})$, so $\varphi_{\vert\underline{\det}(\mathscr{W})\vert}$ embeds $\mathcal{M}_\vartheta$ as a subvariety of codimension 20.
  \end{example} 

To conclude we formulate a conjecture linking the multigraded Pl\"{u}cker morphism of a quiver flag variety with its Cox ring. In light of Lemma~\ref{lem:nefcone}, the Cox ring of $\mathcal{M}_\vartheta$ is the $\kk$-algebra
 \[
 \Cox(\mathcal{M}_\vartheta):= \bigoplus_{(\theta_1,\dots,
   \theta_\rho)\in
   \ZZ^\rho}H^0\Big(\mathcal{M}_\vartheta,\det(\mathscr{W}_1)^{\theta_1}\otimes
 \dots \otimes \det(\mathscr{W}_\rho)^{\theta_\rho}\Big).
 \]
 Since $\mathcal{M}_\vartheta$ is a Mori Dream Space, Hu--Keel~\cite[Proposition~2.9]{HuKeel} shows that $\Cox(\mathcal{M}_\vartheta)$ is finitely generated and that $\mathcal{M}_\vartheta$ is isomorphic to $\Spec\big(\!\Cox(\mathcal{M}_\vartheta)\big)\git_{\vartheta^\prime} \Hom(\ZZ^\rho,\kk^\times)$ for the ample linearisation $\vartheta^\prime = (-\rho,1,\dots,1)$. For example,  the Cox ring of the toric variety $\vert\underline{\det}(\mathscr{W})\vert=\mathcal{M}_{\vartheta^\prime}(Q^\prime,\underline{r}^\prime)$ arising from the sequence $\underline{\det}(\mathscr{W})$ is the $\ZZ^{\rho}$-graded polynomial ring $\kk[y_a : a\in Q^\prime]$, and the GIT construction is the defining construction $\Rep(Q^\prime,\underline{r}^\prime)\git_{\vartheta^\prime}G(\underline{r}^\prime)$. 
 
 If $\mathscr{W}_1^\prime,\dots,\mathscr{W}^\prime_\rho$ denote the nontrivial tautological bundles on $\vert\underline{\det}(\mathscr{W})\vert$, then the maps \eqref{eqn:gtheta} for the multigraded Pl\"{u}cker morphism are
 \[
g_\theta\colon H^0\big((\mathscr{W}^\prime_1)^{\theta_1}\otimes\dots\otimes(\mathscr{W}_\rho^\prime)^{\theta_\rho}\big)\longrightarrow H^0\big(\det(\mathscr{W}_1)^{\theta_1}\otimes\dots\otimes\det(\mathscr{W}_\rho)^{\theta_\rho}\big)
\]
for $\theta\in \ZZ^\rho$. These maps are surjective as $\theta$ ranges over each standard basis vector of $\ZZ^\rho$ by \eqref{eqn:surjectiongj}, but they need not a priori be surjective otherwise. Nevertheless we propose the following:
 
 \begin{conjecture}
 \label{conj:Plucker}
 The homomorphism of $\ZZ^\rho$-graded $\kk$-algebras
  \begin{equation}
  \label{eqn:Coxgens}
  \bigoplus_{\theta\in \ZZ^\rho} g_\theta\colon \kk[y_a : a\in Q^\prime_1] \longrightarrow \Cox(\mathcal{M}_\vartheta)
  \end{equation}
is surjective. 
 \end{conjecture}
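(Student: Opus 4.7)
The plan is to induct on $\rho$ using the tower of Grassmann-bundles from Theorem~\ref{thm:tower}, mirroring the strategy of Proposition~\ref{prop:Pluckerembedding}. For the base case $\rho=1$, the quiver flag variety $\mathcal{M}_\vartheta\cong\Gr(\kk^{s_1},r_1)$ and $\vert\underline{\det}(\mathscr{W})\vert\cong\PP^*(\bigwedge^{r_1}\kk^{s_1})$; the conjecture then reduces to the classical fact that the Cox ring of the Grassmannian is the Pl\"ucker algebra, generated as a $\kk$-algebra by the Pl\"ucker coordinates in $H^0(\mathcal{M}_\vartheta,\det(\mathscr{W}_1))$.

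For the inductive step, write $\pi\colon\mathcal{M}_\vartheta=\Gr(\mathscr{F}_\rho,r_\rho)\to Y_{\rho-1}$ and $\mathscr{V}_i:=\mathscr{W}^{(\rho-1)}_i$. Given $\theta\in\ZZ^\rho$, set $\theta^\prime=(\theta_1,\dots,\theta_{\rho-1})$ and let $L(\theta)$, $L^\prime(\theta^\prime)$ denote the corresponding line bundles on $\mathcal{M}_\vartheta$ and $Y_{\rho-1}$. When $\theta_\rho\geq 0$, the projection formula together with Lemma~\ref{lem:Rpushforward} gives
\[
H^0\big(\mathcal{M}_\vartheta, L(\theta)\big)\cong H^0\big(Y_{\rho-1}, L^\prime(\theta^\prime)\otimes \mathbb{S}^{(\theta_\rho,\dots,\theta_\rho)}\mathscr{F}_\rho\big),
\]
after which decomposing $\mathscr{F}_\rho = \bigoplus_{\{a\in Q_1 :\head(a)=\rho\}}\mathscr{V}_{\tail(a)}$ via the second Littlewood--Richardson rule~\eqref{eqn:LRrules} expresses the right hand side as a direct sum of sections of $L^\prime(\theta^\prime)$ tensored with products of Schur powers of the $\mathscr{V}_j$. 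For $\theta_\rho<0$, Lemma~\ref{lem:Rpushforward} forces the higher direct images to govern $H^0$ through the Leray spectral sequence; I expect that such $\theta$ either lie outside the effective cone of $\mathcal{M}_\vartheta$, or can be reduced to the previous case by multiplying through by a large power of $\det(\mathscr{W}_\rho)$ and invoking the injectivity of global sections into their generic stalks.

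The main obstacle is that the inductive hypothesis, as stated, controls only the Cox ring of $Y_{\rho-1}$, namely the sections of $\det(\mathscr{V}_1)^{\theta_1}\otimes\cdots\otimes\det(\mathscr{V}_{\rho-1})^{\theta_{\rho-1}}$, whereas the decomposition above requires understanding sections of arbitrary tensor products of Schur powers $\mathbb{S}^{\mu_j}\mathscr{V}_j$. Bridging this gap calls for a strengthened inductive claim: that every such product of Schur powers is generated by its sections obtained as Young symmetrizers of tensor powers of sections of the $\mathscr{V}_j$, together with the claim that those latter sections are images under the multigraded Pl\"ucker morphism of degree-one monomials in the $y_a$. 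Equivalently, writing $\mathscr{I}$ for the ideal sheaf of $\varphi_{\vert\underline{\det}(\mathscr{W})\vert}$ in $\vert\underline{\det}(\mathscr{W})\vert$, the conjecture is equivalent to the vanishing $H^1\big(\vert\underline{\det}(\mathscr{W})\vert,\mathscr{I}\otimes L^\prime(\theta)\big)=0$ for all $\theta$ in the effective cone; an alternative attack, should the Schur-theoretic bookkeeping of the induction prove intractable, would be to combine Kodaira-type vanishing on the smooth toric variety $\vert\underline{\det}(\mathscr{W})\vert$ with an explicit resolution of $\mathscr{I}$ built from the commutative diagram~\eqref{eqn:Pluckerdiagram}.
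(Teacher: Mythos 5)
The statement you are trying to prove is stated in the paper as a \emph{conjecture}, and the paper offers no proof: the author explicitly remarks, immediately before stating it, that the maps $g_\theta$ ``need not a priori be surjective'' except in the degrees of the standard basis vectors. So there is no ``paper proof'' to compare against, and any complete argument here would resolve an open problem posed by the paper itself. Your write-up is therefore best read as a research plan, and to your credit you flag the gaps honestly rather than papering over them.

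Your base case $\rho=1$ is correct: the Cox ring of $\Gr(\kk^{s_1},r_1)$ is the Pl\"ucker algebra, generated in degree one, and $g_1$ surjects onto the degree-one piece by construction. Your reformulation of the conjecture as the vanishing $H^1\big(\vert\underline{\det}(\mathscr{W})\vert,\mathscr{I}\otimes L^\prime(\theta)\big)=0$ for all effective $\theta$ is also correct, since the ambient Cox ring is the polynomial ring $\kk[y_a:a\in Q^\prime_1]$ and the restriction maps agree with $g_\theta$.

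The genuine gap is exactly the one you identify yourself: the inductive hypothesis controls only $H^0\big(Y_{\rho-1},\det(\mathscr{V}_1)^{\theta_1}\otimes\cdots\otimes\det(\mathscr{V}_{\rho-1})^{\theta_{\rho-1}}\big)$, while the Littlewood--Richardson decomposition of $\mathbb{S}^{(\theta_\rho,\dots,\theta_\rho)}\mathscr{F}_\rho$ requires control of global sections of arbitrary products of Schur powers $\mathbb{S}^{\mu_j}\mathscr{V}_j$, together with a compatibility between those sections and products of the arrow morphisms $f_{a^\prime}$. That is not a bookkeeping issue; it is a statement about the ring structure of a much larger multi-section ring, and it does not follow from the stated induction. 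Your dismissal of the case of negative components is also too quick: negative $\theta_i$ for $i<\rho$ certainly occur in the effective cone (e.g.\ the flop chamber in Example~\ref{exa:toric2}\one), and the projection formula pushes those negative twists onto $Y_{\rho-1}$ where the hypothesis again does not reach. Finally, the ``alternative attack'' via diagram \eqref{eqn:Pluckerdiagram} is precisely the route the paper's closing remark warns against: the Cox ring of the intermediate variety $\mathbb{P}^*\big(\bigwedge^{r_\rho}\mathscr{F}_\rho\big)$ is not a priori finitely generated, a difficulty the author traces to an open question of Hering--Musta{\c{t}}{\v{a}}--Payne already for projectivisations of toric vector bundles. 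So neither branch of your plan currently closes, and the conjecture should be treated as open.
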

 
  The multigraded Pl\"{u}cker embedding can be recovered from \eqref{eqn:Coxgens} by applying $\Spec(-)$ and taking the GIT quotients of the $\Hom(\ZZ^\rho,\kk^*)$-action linearised by $\vartheta^\prime\in \ZZ^\rho$. In particular, the homogeneous ideal $I:=\Ker(\bigoplus_{\theta\in \ZZ^\rho} g_\theta)$ cuts out the image of the multigraded Pl\"{u}cker embedding, in the sense that $\mathcal{M}_\vartheta\cong \mathbb{V}(I)\git_{\vartheta^\prime}\Hom(\ZZ^\rho,\kk^\times)$. It would be interesting to find a natural set of generators for $I$ and hence describe explicitly the Cox ring of $\mathcal{M}_\vartheta$.

 \begin{remark}
 \begin{enumerate}
  \item[\one] One might hope to prove the conjecture by working with the Cox rings of varieties on the top row of \eqref{eqn:Pluckerdiagram}. However, the Cox ring of $\mathbb{P}^*\big(\bigwedge^{r_\rho}\mathscr{F}_\rho\big)$ is not a priori finitely generated; even for the projectivisation of a vector bundle on a toric variety, this question was posed by Hering--Musta{\c{t}}{\v{a}}--Payne~\cite[Question~8.2]{HMP}.
  \item[\two]   Given the conjecture,  Hu--Keel~\cite[Proposition~2.11]{HuKeel} implies that all morphisms obtained by running the Mori programme on $\mathcal{M}_\vartheta$ are obtained by restriction from those of the ambient toric variety $\vert\underline{\det}(\mathscr{W})\vert$.  
  \end{enumerate}
 \end{remark}
 
 \def\cprime{$'$}


\begin{thebibliography}{10}

\bibitem{AltmannHille}
Klaus Altmann and Lutz Hille.
\newblock Strong exceptional sequences provided by quivers.
\newblock {\em Algebr. Represent. Theory}, 2(1):1--17, 1999.

\bibitem{Baer}
Dagmar Baer.
\newblock Tilting sheaves in representation theory of algebras.
\newblock {\em Manuscripta Math.}, 60(3):323--347, 1988.

\bibitem{Beilinson}
Alexander Be{\u\i}linson.
\newblock Coherent sheaves on $\mathbb{P}^{n}$ and problems in linear algebra.
\newblock {\em Funktsional. Anal. i Prilozhen.}, 12(3):68--69, 1978.

\bibitem{Bondal}
Alexey Bondal.
\newblock Helices, representations of quivers and {K}oszul algebras.
\newblock In {\em Helices and vector bundles}, volume 148 of {\em London Math.
  Soc. Lecture Note Ser.}, pages 75--95. Cambridge Univ. Press, Cambridge,
  1990.

\bibitem{Broomhead}
Nathan Broomhead.
\newblock \emph{Cohomology of line bundles on a toric variety and constructible
  sheaves on its polytope}, 2006.
\newblock preprint, arXiv:math/0611468.

\bibitem{CostaMiroRoig}
Laura Costa and Rosa~Maria Mir{\'o}-Roig.
\newblock Tilting sheaves on toric varieties.
\newblock {\em Math. Z.}, 248(4):849--865, 2004.

\bibitem{CrawSmith}
Alastair Craw and Gregory~G. Smith.
\newblock Projective toric varieties as fine moduli spaces of quiver
  representations.
\newblock {\em Amer. J. Math.}, 130(6):1509--1534, 2008.

\bibitem{CrawleyBoevey}
William Crawley-Boevey.
\newblock Geometry of the moment map for representations of quivers.
\newblock {\em Compositio Math.}, 126(3):257--293, 2001.

\bibitem{Demailly}
Jean-Pierre Demailly.
\newblock Vanishing theorems for tensor powers of an ample vector bundle.
\newblock {\em Invent. Math.}, 91(1):203--220, 1988.

\bibitem{DerksenWeyman2}
Harm Derksen and Jerzy Weyman.
\newblock \emph{The combinatorics of quiver representations}, 2006.
\newblock arXiv:math/0608288.

\bibitem{EMS}
David Eisenbud, Mircea Musta{\c{t}}{\v{a}}, and Mike Stillman.
\newblock Cohomology on toric varieties and local cohomology with monomial
  supports.
\newblock {\em J. Symbolic Comput.}, 29(4-5):583--600, 2000.
\newblock Symbolic computation in algebra, analysis, and geometry (Berkeley,
  CA, 1998).

\bibitem{Fulton}
William Fulton.
\newblock {\em Young tableaux}, volume~35 of {\em London Mathematical Society
  Student Texts}.
\newblock Cambridge University Press, Cambridge, 1997.
\newblock With applications to representation theory and geometry.

\bibitem{Ginzburg}
Viktor Ginzburg.
\newblock \emph{{L}ectures on {N}akajima's quiver varieties}, 2009.
\newblock preprint, arXiv:0906.3466.

\bibitem{M2}
Daniel~R. Grayson and Michael~E. Stillman.
\newblock Macaulay 2, a software system for research in algebraic geometry.
\newblock Available at http://www.math.uiuc.edu/Macaulay2/.

\bibitem{EGA2}
Alexandre Grothendieck.
\newblock \'{E}l\'ements de {G}\'eom\'etrie {A}lg\'ebrique. {II}. \'{E}tude globale
  \'el\'ementaire de quelques classes de morphismes.
\newblock {\em Inst. Hautes \'Etudes Sci. Publ. Math.}, (8):222, 1961.

\bibitem{EGA}
Alexandre Grothendieck and Jean Dieudonn\'{e}.
\newblock {\em \'{E}l\'{e}ments de {G}\'{e}om\'{e}trie {A}lg\'{e}brique {I}}.
\newblock Springer-Verlag, Berlin; New York, 1971.
\newblock (2nd ed.).

\bibitem{Halic}
Mihai Halic.
\newblock \emph{Strong exceptional sequences of vector bundles on certain
  {F}ano varieties}, 2009.
\newblock arXiv:0906.3466.

\bibitem{HMP}
Milena Hering, Mircea Musta{\c{t}}{\v{a}}, and Sam Payne.
\newblock \emph{Positivity for toric vectorbundles}, 2008.
\newblock to appear in Annales de l'Institut Fourier, arXiv:0805.4035.

\bibitem{Hille}
Lutz Hille.
\newblock Toric quiver varieties.
\newblock In {\em Algebras and modules, II (Geiranger, 1996)}, volume~24 of
  {\em CMS Conf. Proc.}, pages 311--325. Amer. Math. Soc., Providence, RI,
  1998.

\bibitem{HuKeel}
Yi~Hu and Sean Keel.
\newblock Mori dream spaces and {GIT}.
\newblock {\em Michigan Math. J.}, 48:331--348, 2000.
\newblock Dedicated to William Fulton on the occasion of his 60th birthday.

\bibitem{Kapranov1}
Mikhail Kapranov.
\newblock Derived category of coherent sheaves on {G}rassmann manifolds.
\newblock {\em Izv. Akad. Nauk SSSR Ser. Mat.}, 48(1):192--202, 1984.

\bibitem{Kapranov2}
Mikhail Kapranov.
\newblock On the derived categories of coherent sheaves on some homogeneous
  spaces.
\newblock {\em Invent. Math.}, 92(3):479--508, 1988.

\bibitem{King}
Alastair King.
\newblock Moduli of representations of finite-dimensional algebras.
\newblock {\em Quart.~J.~Math.~Oxford Ser.~(2)}, 45(180):515--530,
  1994.

\bibitem{King2}
Alastair King.
\newblock \emph{Tilting bundles on some rational surfaces}, 1997.
\newblock preprint.

\bibitem{Orlov}
Dimitri Orlov.
\newblock Projective bundles, monoidal transformations, and derived categories
  of coherent sheaves.
\newblock {\em Izv. Ross. Akad. Nauk Ser. Mat.}, 56(4):852--862, 1992.

\end{thebibliography}
 
 \end{document}